\newtheorem{theorem}{Theorem}[section]
\newtheorem{lemma}[theorem]{Lemma}
\theoremstyle{definition}
\newtheorem{definition}[theorem]{Definition}
\theoremstyle{remark}
\newtheorem{remark}[theorem]{Remark}
\newtheorem{example}[theorem]{Example}
\def\F{\mathbb{F}}
\def\N{\mathbb{N}}
\def\Z{\mathbb{Z}}
\def\Zbang{\Z^!}
\def\lebang{<^!}
\def\gebang{>^!}
\def\leqbang{\leq^!}
\def\geqbang{\geq^!}
\def\cC{\mathcal{C}}
\def\CFKi{\CFK^{\infty}}
\def\Im{\operatorname{Im}}
\def \gr {\operatorname{gr}}
\def\d{\partial}
\def\varep{\varepsilon}
\def\co{\colon}
\def\from{\colon}
\def\id{\textup{id}}
\def\Inv{\mathfrak{I}}
\def\im{\operatorname{im}}
\def\CF {\mathit{CF}}
\def\HF {\mathit{HF}}
\newcommand\HFhat{\widehat{\HF}}
\newcommand \CFm {\CF^-}
\newcommand \HFm {\HF^-}
\def\CFK{\mathit{CFK}}
\def\ff{\mathbb{F}}
\def\inv{\iota}
\def\mfIhat{\widehat{\mathfrak{I}}}
\newcommand{\real}{\tilde{\mathfrak{I}}} 
\def\mfI{\mathfrak{I}}
\def\cCFK{\mathcal{CFK}}
\def\iotabar{\overline{\iota}}
\def\cA{\mathcal{A}}
\def\rC{\overline{\cC}}
\def\omegahat{\widehat{\omega}}
\newcommand{\shift}{\mathrm{sh}} 
\newcommand{\omegahom}{H_\omega} 
\author[I. Dai]{Irving Dai}
\thanks{The first author was partially supported by NSF grant DGE-1148900.}
\address {Department of Mathematics, Princeton University, Princeton, NJ 08544}
\email{idai@math.princeton.edu}
\author[J. Hom]{Jennifer Hom}
\thanks{The second author was partially supported by NSF grant DMS-1552285 and a Sloan Research Fellowship.}
\address {School of Mathematics, Georgia Institute of Technology, Atlanta, GA 30332}
\email{hom@math.gatech.edu}
\author[M. Stoffregen]{Matthew Stoffregen}
\thanks{The third author was partially supported by NSF grant DMS-1702532.}
\address {Department of Mathematics, Massachusetts Institute of Technology, Cambridge, MA 02142}
\email{mstoff@mit.edu}
\author[L. Truong]{Linh Truong}
\thanks{The fourth author was partially supported by NSF grant DMS-1606451.}
\address {Department of Mathematics, Columbia University, New York, NY 10027}
\email{ltruong@math.columbia.edu}
\numberwithin{equation}{section}
\title{An infinite-rank summand of the homology cobordism group}
\begin{document}

\begin{abstract} 
We show that the three-dimensional homology cobordism group admits an infinite-rank summand. It was previously known that the homology cobordism group contains a $\Z^\infty$-subgroup \cite{Furuta, FintushelStern} and a $\Z$-summand \cite{Froyshov}.
Our proof proceeds by introducing an algebraic variant of the involutive Heegaard Floer package of Hendricks-Manolescu and Hendricks-Manolescu-Zemke. This is inspired by an analogous argument in the setting of knot concordance due to the second author.
\end{abstract}

\maketitle

\section{Introduction}

The integral homology cobordism group $\Theta^3_\Z$ has occupied a central place in the development of smooth four-manifold topology. Historically, the first known result concerning the structure of $\Theta^3_\Z$ was the existence of the Rokhlin homomorphism $\mu \co \Theta^3_\Z \to \Z/2\Z$, which at the time was even conjectured to be an isomorphism. However, in the 1980s, techniques from gauge theory were utilized to show that $\Theta^3_\Z$ is infinite \cite{FintushelSternPseudo} and in fact contains a $\Z^\infty$-subgroup \cite{Furuta, FintushelStern}. In \cite{Froyshov}, Fr{\o}yshov used Yang-Mills theory to define a surjective homomorphism from $\Theta^3_{\Z}$ to $\Z$, proving that $\Theta^3_\Z$ has a $\Z$-summand. Analogous homomorphisms have been constructed using Seiberg-Witten theory \cite{FroyshovMonopole, KMmonopoles} and Heegaard Floer homology \cite{OSabsgr}.\footnote{The Seiberg-Witten and Heegaard Floer $d$-invariants are known to agree (after appropriate normalization); see \cite{CGH, KLT, Taubes, HuangRamos, Cristofaro-Gardiner}. It is still open whether they contain the same information as the Fr\o yshov $h$-invariant.}

More recently, Manolescu \cite{ManolescuTriangulation} used Pin(2)-equivariant Seiberg-Witten Floer theory to show that if $\mu(Y) = 1$, then $Y$ is not of order two in $\Theta^3_\Z$. By work of Galewski-Stern \cite{GalewskiStern} and Matumoto \cite{Matumoto}, this led to a disproof of the triangulation conjecture in high dimensions. See \cite{ManolescuHomcobordsurvey} for a survey on the triangulation conjecture and the homology cobordism group.

Many questions involving the structure of $\Theta^3_\Z$ remain open. Chief among these is the problem of whether $\Theta^3_\Z$ contains any torsion, or whether modulo torsion it is free abelian. We give a partial answer to the latter in the following theorem:

\begin{theorem}\label{thm:main} 
The homology cobordism group $\Theta^3_\Z$ contains a direct summand isomorphic to $\Z^\infty$.
\end{theorem}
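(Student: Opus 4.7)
The plan is to adapt the second author's strategy for producing a $\Z^\infty$-summand in the smooth knot concordance group to the present setting, replacing $\CFK$ and its concordance-theoretic structures with involutive Heegaard Floer complexes and their homology-cobordism-theoretic structures. First, I would introduce an abstract algebraic category modeling the involutive Heegaard Floer package: an \emph{iota-complex} is a pair $(C, \iota)$ consisting of a free, finitely generated, $\Q$-graded chain complex $C$ over $\F[U]$ with $U^{-1} H_*(C) \cong \F[U, U^{-1}]$, together with a grading-preserving chain homotopy involution $\iota \co C \to C$ (that is, $\iota^2 \simeq \id$). Tensor product of iota-complexes models connected sum. Declare $(C, \iota_C)$ and $(D, \iota_D)$ to be \emph{locally equivalent} if there exist chain maps $C \to D$ and $D \to C$ each of which becomes an isomorphism after inverting $U$ and which intertwine the two involutions up to chain homotopy. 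The set $\mfIhat$ of local equivalence classes becomes an abelian group under $\otimes$, and work of Hendricks-Manolescu and Hendricks-Manolescu-Zemke implies that
\[
h \co \Theta^3_\Z \longrightarrow \mfIhat, \qquad Y \longmapsto [(\CFm(Y), \iota_Y)]
\]
is a well-defined group homomorphism.

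Next I would produce an explicit infinite family $\{Y_n\}$ of integer homology spheres whose iota-complexes have a transparent algebraic structure --- natural candidates are Brieskorn spheres such as $\Sigma(2,3,6n+1)$ or $\pm 1$-surgeries on torus knots --- in which the relevant ``tower complexity'' of $(\CFm(Y_n), \iota_{Y_n})$ grows with $n$. In parallel I would define, for each $n$, a homomorphism
\[
\varphi_n \co \mfIhat \longrightarrow \Z
\]
that measures the failure of $\iota$-equivariance at a grading level calibrated to $Y_n$. The aim is to arrange the family so that the infinite matrix $\bigl(\varphi_n(h(Y_m))\bigr)_{n, m}$ is upper triangular with nonzero diagonal entries. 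If this holds, then the map $(\varphi_n) \co \mfIhat \to \Z^\infty$ realizes the subgroup generated by $\{h(Y_n)\}$ as a direct summand of $\mfIhat$ isomorphic to $\Z^\infty$; pulling back through $h$ then produces the desired $\Z^\infty$-summand of $\Theta^3_\Z$.

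The main obstacle is the construction of the family of homomorphisms $\varphi_n$. The Hendricks-Manolescu invariants $\du$ and $\dl$ do not suffice on their own: they take values in $\Z$ rather than $\Z^\infty$, they are a priori only local equivalence invariants (not known to be additive under connected sum), and in many relevant examples they collapse to the value of the ordinary $d$-invariant. The technical heart of the argument is therefore to isolate finer algebraic features of an iota-complex --- for example, the grading-shift of the shortest $\iota$-equivariant chain connecting a specified tower generator to its image, or the interaction of $\iota$ with distinguished absolutely-graded cycles at each level of a $U$-tower --- and to prove that these features are \emph{both} additive under $\otimes$ \emph{and} invariant under local equivalence. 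Once this is done, an explicit computation of the iota-complexes of the chosen homology spheres, using existing descriptions for Seifert fibered spaces, should reduce the triangularity claim to a bookkeeping check on small model complexes.
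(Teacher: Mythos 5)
Your outline reproduces the paper's high-level strategy (a local-equivalence group receiving a homomorphism from $\Theta^3_\Z$, followed by a family of $\Z$-valued homomorphisms detecting a family of Brieskorn spheres), but the group you actually define is the genuine involutive local equivalence group $\mfI$ of Hendricks--Manolescu--Zemke: your $\iota$ is an honest homotopy involution and your local maps intertwine the involutions up to chain homotopy. The step you defer --- constructing homomorphisms $\varphi_n \co \mfI \to \Z$ that are simultaneously additive under $\otimes$ and invariant under local equivalence --- is precisely the part for which no construction is known, and your suggested candidate invariants (grading shifts of shortest $\iota$-equivariant chains, interactions of $\iota$ with distinguished cycles in a $U$-tower) are not shown to have either property; this is the entire technical content of the problem, not a bookkeeping check. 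Indeed the paper exhibits a concrete obstruction to working directly in $\mfI$: there is a self-dual, $2$-torsion local equivalence class (Example~\ref{ex:selfdual}) which is not locally trivial, so $\mfI$ admits no total order and no parameterization of its elements is known. The paper's key idea, absent from your proposal, is to \emph{coarsen} the category: an almost $\iota$-complex only requires $\iotabar\partial + \partial\iotabar \in \im U$ and $\iotabar^2 \simeq \id \bmod U$, and morphisms need only commute with $\iotabar$ mod $U$. The resulting group $\mfIhat$ kills the troublesome torsion (Example~\ref{ex:selfdualzero}), is totally ordered (Theorem~\ref{thm:totalorder}), and is parameterized by the standard complexes $\cC(a_i,b_i)$ with the lexicographic order (Theorems~\ref{thm:lexico} and \ref{thm:parameterization}); only then can one define $\phi_n$ as a signed count of parameters $b_i = \pm n$, and even so its additivity requires the shift maps $\shift_n$ and the pivotal homomorphism $P$ of Section~\ref{sec:7}. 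Without this mod-$U$ relaxation (or some substitute mechanism you do not supply), your plan stalls exactly at the step you flag as the ``technical heart.''

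A secondary issue is your choice of examples: for a family such as $\Sigma(2,3,6n+1)$ the involutive local equivalence classes do not grow in complexity (the relevant $U$-torsion tower heights stay bounded), so no triangularity with growing index can be arranged from them. The paper instead uses $Y_i = \Sigma(2i+1,4i+1,4i+3)$, whose classes are the complexes $X_i$ of Example~\ref{ex:xicomplex} with tower height $i$, mapping to the standard complexes $\cC(-,i)$; this is what makes $f_j(Y_i) = \delta_{ij}$ and hence the splitting argument work. (Your final splitting step is fine: a surjection onto a free abelian group of infinite rank splits, so upper-triangularity with nonzero diagonal would suffice --- but only once the homomorphisms and a suitable family actually exist.)
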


The proof of Theorem \ref{thm:main} relies on the machinery of involutive Heegaard Floer homology, defined by Hendricks and Manolescu in \cite{HM:involutive}. This is a modification of the usual Heegaard Floer homology of Ozsv\'ath and Szab\'o which takes into account the additional data of a homotopy involution $\iota$ on the Heegaard Floer complex $\CFm(Y)$. We refer to the pair $(\CFm(Y), \inv)$ as an \textit{$\inv$-complex}. In \cite{HMZ:involutive-sum}, Hendricks, Manolescu, and Zemke showed that up to an algebraic equivalence relation called \textit{local equivalence}, the pair $(\CFm(Y), \iota)$ is an invariant of the homology cobordism class of $Y$. Motivated by this, they defined a group $\Inv$ consisting of the set of all possible (abstract) $\inv$-complexes modulo the relation of local equivalence, with operation induced by tensor product. In \cite{HMZ:involutive-sum}, it was shown that the map
\[
h: Y \mapsto h(Y) = (\CFm(Y)[-2], \inv)
\]
sending $Y$ to the local equivalence class of its (grading-shifted) $\inv$-complex induces a homomorphism from $\Theta^3_\Z$ to $\mfI$.

The construction of $\mfI$ has antecedents in work of the third author as well as work of the second author. In \cite{Stoffregen}, the third author defined a similar group $\mathfrak{CLE}$ in the Seiberg-Witten Floer setting, by considering Pin(2)-equivariant Seiberg-Witten Floer complexes modulo an algebraic equivalence relation called chain local equivalence. In the setting of knot concordance, the second author \cite{Homconcordance} considered the group $\cCFK$, generated by certain bifiltered chain complexes modulo an equivalence relation called $\varep$-equivalence, and showed that the map sending a knot $K$ to $\CFKi(K)$ induces a homomorphism from the knot concordance group to $\cCFK$. See Zemke \cite[Theorem 1.5]{Zemkeconnsuminv} for an analogous construction in the case of involutive knot Floer homology. 


A key feature of $\cCFK$ (which the groups $\mfI$ and $\mathfrak{CLE}$ lack) is the presence of a total order that is compatible with the group structure. There is a natural partial order on $\mfI$, which is defined by setting $(C_1, \iota_1) \leq (C_2, \iota_2)$ whenever there exists an $\iota$-complex morphism
\begin{equation} \label{eq:pomorph}
	f \co (C_1, \iota_1) \to (C_2, \iota_2)
\end{equation}
that induces an isomorphism on $U^{-1}H_*$. However, since there is 2-torsion in $\mfI$, this partial order is not a total order.

In this paper, our strategy will be to define an algebraic modification $\smash{\mfIhat}$ of $\Inv$, consisting of equivalence classes of what we call \textit{almost $\iota$-complexes}. This group comes with its own homomorphism 
\[
\widehat{h}: \Theta^3_{\Z} \rightarrow \mfIhat
\]
factoring through $h: \Theta^3_{\Z} \rightarrow \Inv$. The set of $\iota$-complexes is a proper subset of the set of almost $\iota$-complexes, but the equivalence relation on $\smash{\mfIhat}$ is coarser than the equivalence relation on $\Inv$. The key difference between the two is that an almost $\iota$-complex is a pair $(C, \iotabar)$, where $\iotabar$ is no longer required to be a chain map or a homotopy involution. Instead, we require that these two conditions hold modulo the ideal generated by $U$. While this might initially sound rather unfavorable, we will show that the partial order on $\mfI$ translates to a total order on $\smash{\mfIhat}$.

\begin{remark}\label{rmk:CFK}
The total order on $\cCFK$, while originally defined in terms of $\varep$, can equivalently be defined in terms of morphisms, as in \eqref{eq:pomorph}. Indeed, following Zemke \cite{Zemkequasistab}, one may recast $\CFKi(K)$ as a chain complex over the ring $\F[U, V]$. If one then quotients by the ideal generated by $UV$ (i.e., works over $\F[U, V]/UV$), then the analogue to \eqref{eq:pomorph} induces the total order on $\cCFK$. Indeed, most of the constructions in this paper have analogues in $\cCFK$, which will be explored in upcoming work \cite{DHST}. 
\end{remark}

Our main technical result will be to give an explicit parameterization of $\smash{\mfIhat}$ as an ordered set. Each element of $\smash{\mfIhat}$ will be described by a finite tuple of nonzero integers, with the order being given by the lexicographic order. Using this description, for each $n \in \N$, we define a map
\[ \phi_n \co \mfIhat \to \Z, \]
where (roughly speaking) $\phi_n(X)$ counts the number of times $n$ appears in the tuple describing $X$. (Intuitively, this corresponds to the signed count of $\F[U]/U^n$ summands in the homology of $X$; see Sections~\ref{sec:4} and \ref{sec:7} for details.) In Section~\ref{sec:7}, we show that the $\phi_n$ are homomorphisms. It follows that each composition
\[
f_n = \phi_n \circ \widehat{h} \co \Theta^3_{\Z} \rightarrow \Z
\]
is also a homomorphism, and thus that the family 
\[
\vec{{}f} = \{f_n\}_{n \in \N} \co  \Theta^3_{\Z} \rightarrow \Z^{\infty}
\]
provides a homomorphism from $\Theta^3_{\Z}$ into $\Z^\infty$.

\begin{proof}[Proof of Theorem \ref{thm:main}]
For each $i > 0$, consider the Brieskorn sphere 
\[
Y_i = \Sigma(2i+1, 4i +1, 4i+3).
\]
The proof proceeds by observing that $f_j(Y_i) = \delta_{ij}$, where $\delta_{ij}$ denotes the Kronecker delta. (See Example~\ref{ex:xicomplex} and Theorem~\ref{thm:hhatzinfty} for details.) Hence
\[
\vec{f{}} = \{f_n\}_{n \in \N} \co  \Theta^3_{\Z} \rightarrow \Z^{\infty}
\]
is a surjective homomorphism. We thus see that the $Y_i$ generate an infinite-rank free summand of $\Theta^3_\Z$, as desired.
\end{proof}

While we give an explicit parameterization of $\smash{\mfIhat}$ as a set, a complete description of the group structure on $\smash{\mfIhat}$ remains open. In Section~\ref{sec:8}, we establish some partial results to this effect and discuss the relationship between $\smash{\mfIhat}$ and $\Inv$. We also describe some possible applications to other questions involving the span of Seifert fibered spaces in $\Theta^3_{\Z}$.

\begin{remark}
The homomorphisms $f_n$ are in fact spin homology cobordism invariants. Let $\Theta^3_{\Z_2}$ be the group of oriented $\Z_2$-homology spheres modulo $\Z_2$-homology cobordism. Then a similar argument as above shows that $\smash{\vec{f{}}}$ is a surjective homomorphism from $\Theta^3_{\Z_2}$ to $\Z^{\infty}$, proving that $\Theta^3_{\Z_2}$ also admits an infinite-rank summand.
\end{remark}

\subsection*{Organization} In Section~\ref{sec:2}, we give a brief overview of the formalism of involutive Heegaard Floer homology and the construction of $\Inv$. In Section~\ref{sec:3}, we define the modified group $\smash{\mfIhat}$ and prove that it admits a total order. Section~\ref{sec:4} is then devoted to constructing a particularly important family of almost $\inv$-complexes, which we use to give an explicit parameterization of $\smash{\mfIhat}$ over the course of Sections~\ref{sec:5} and \ref{sec:6}. In Section~\ref{sec:7}, we show that the $\phi_n$ are homomorphisms. Finally, in Section~\ref{sec:8}, we give some sample calculations and discuss the relationship between $\Inv$ and $\smash{\mfIhat}$.

\subsection*{Acknowledgements} We would like to thank Kristen Hendricks, Tye Lidman, and Ciprian Manolescu for helpful conversations. The first author would like to thank his advisor, Zolt\'an Szab\'o, for his continued support and guidance. We are also grateful for the Topologie workshop at Oberwolfach in July 2018, during which part of this work was completed.


\section{Involutive Heegaard Floer Homology}\label{sec:2}
In this section, we briefly review the involutive Heegaard Floer package defined by Hendricks-Manolescu \cite{HM:involutive} and Hendricks-Manolescu-Zemke \cite{HMZ:involutive-sum}. For the present application, we will only need to understand the algebraic formalism of the output; we refer the interested reader to \cite{HM:involutive} instead for the topological details of the construction. 

\subsection{Local equivalence and $\mfI$}
We begin with the definition of involutive Heegaard Floer homology.

\begin{definition}\cite[Definition 8.1]{HMZ:involutive-sum}\label{def:iotacomplex}
An {\em $\inv$-complex} $\cC = (C, \inv)$ consists of the following data:
\begin{itemize}
\item A free, finitely-generated, $\Z$-graded chain complex $C$ over $\ff[U]$, with
\[
U^{-1}H_*(C) \cong \ff[U, U^{-1}].
\]
Here, $U$ has degree $-2$ and $U^{-1}H_*(C)$ is supported in even gradings.
\item A grading-preserving, $U$-equivariant chain homomorphism $\inv: C \to C$ such that $\inv^2$ is chain homotopic to the identity; i.e.,
\[
\inv^2 \simeq \id.
\]
\end{itemize}
Two $\inv$-complexes are \textit{homotopy equivalent} if there exist (grading-preserving) homotopy equivalences between them, which are themselves $\inv$-equivariant up to homotopy. Throughout this paper, we write
\[
\omega = 1 + \inv.
\] 
\end{definition}

\begin{remark}\label{rem:gradingconvention}
In future sections, we will also impose the condition that the maximally graded $U$-nontorsion element in $H_*(C)$ has degree zero. In more topological terms, this means that we always shift gradings so as to make the $d$-invariant zero.
\end{remark}

Given an integer homology sphere $Y$, Hendricks and Manolescu define a homotopy involution $\inv$ on the Heegaard Floer complex $\CFm(Y)$ by using the involution on the Heegaard diagram interchanging the $\alpha$- and $\beta$-curves. They then show that the map sending
\[
Y \mapsto (\CFm(Y), \inv)
\] 
is well-defined up to homotopy equivalence of $\inv$-complexes. The involutive Heegaard Floer homology of $Y$ is defined to be the homology of the mapping cone of $\CFm(Y)$ with respect to the map $\omega = 1 + \inv$.\footnote{Involutive Heegaard Floer homology is defined for all 3-manifolds, but in this paper we will only consider integer homology spheres.} In this paper, we will always work with the $\inv$-complexes themselves, rather than their involutive homology.

\begin{definition}\cite[Definition 8.5]{HMZ:involutive-sum}
Two $\inv$-complexes $(C, \inv)$ and $(C', \inv')$ are said to be {\em locally equivalent} if there exist (grading-preserving) chain maps
\[
f : C \to C', \ \ g : C' \to C
\]
between them such that 
\[
f \circ \inv \simeq \inv' \circ f,  \ \ \ g \circ \inv' \simeq \inv \circ g,
\]
and $f$ and $g$ induce isomorphisms on homology after inverting the action of $U$. We refer to $f$ as a \textit{local map} from $(C, \inv)$ to $(C', \inv')$, and similarly refer to $g$ as a local map in the other direction. Note that local equivalence is a strictly weaker notion than the relation of homotopy equivalence between two $\inv$-complexes. 
\end{definition}

In \cite{HMZ:involutive-sum}, it is shown that if $Y_1$ and $Y_2$ are homology cobordant, then their respective $\inv$-complexes are locally equivalent. It is further shown in \cite[Section 8]{HMZ:involutive-sum} that the set of $\inv$-complexes up to local equivalence forms a group, with the group operation being given by tensor product. We call this group the \textit{(involutive Floer) local equivalence group} and denote it by $\Inv$:

\begin{definition}\cite[Proposition 8.8]{HMZ:involutive-sum}
Let $\Inv$ be the set of $\inv$-complexes up to local equivalence. This has a multiplication given by tensor product, which sends (the local equivalence classes of) two $\inv$-complexes $(C_1, \inv_1)$ and $(C_2, \inv_2)$ to (the local equivalence class of) their tensor product complex $(C_1 \otimes C_2, \inv_1 \otimes \inv_2)$. The identity element of $\Inv$ is given by the trivial complex $\cC(0)$ consisting of a single $\ff[U]$-tower starting in grading zero, together with the identity map on this complex. Inverses in $\Inv$ are given by dualizing. See \cite[Section 8]{HMZ:involutive-sum} for details. 
\end{definition}

Let $h$ be the map sending $Y$ to the local equivalence class of its (grading-shifted) $\inv$-complex:
\[
h(Y) = (\CFm(Y), \iota)[-2].\footnote{The grading shift in this definition is so that $Y = S^3$ is mapped to $\cC(0)$.}
\]
According to \cite[Theorem 1.1]{HMZ:involutive-sum}, $h$ takes connected sums to tensor products, and hence effects a homomorphism
\[
h: \Theta_{\Z}^3 \rightarrow \mfI.
\]
See \cite[Theorem 1.8]{HMZ:involutive-sum}.

\begin{remark}
Restricting to the set of $\inv$-complexes satisfying the normalization convention of Remark~\ref{rem:gradingconvention} gives a subgroup of $\mfI$, which we denote by $\mfI_0$. It is easily checked that
\[
\mfI = \mfI_0 \oplus \Z,
\]
with the extra factor of $\Z$ recording the value of the $d$-invariant. In this paper, we abuse notation slightly and refer to $\mfI_0$ as $\mfI$. Similarly, we let $h$ be the usual $h$-map, except followed by projection onto $\mfI_0$. The reader may equivalently think of this as always considering $\inv$-complexes up to an overall grading shift.
\end{remark}


\subsection{Properties and examples} We now review several background results about $\mfI$ in order to give some motivation for subsequent sections. 

\begin{example}\label{ex:xicomplex}
For any positive integer $i$, let $X_i$ be the $\inv$-complex displayed on the left in Figure~\ref{fig:iotacomplexes}. This is generated (over $\ff[U]$) by three generators $T_0$, $T_1$, and $T_2$, with:
\[
\partial T_0 = \partial T_1 = 0, \ \ \ \partial T_2 = U^i T_1. 
\]
We specify the action of $\inv$ by defining $\omega = 1 + \inv$:
\[
\omega T_0 = T_1, \ \ \ \omega T_1 = \omega T_2 = 0.
\] 
The gradings of $T_0$, $T_1$, and $T_2$ are given by $0$, $0$, and $-2i + 1$, respectively. The dual complex $-X_i$ may be computed via the procedure of \cite[Section 8.3]{HMZ:involutive-sum}; we denote its generators similarly by $T_0$, $T_1$, and $T_2$. These satisfy:
\[
\partial T_0 = \partial T_2 = 0, \ \ \ \partial T_1 = U^i T_2 
\]
and
\[
\omega T_0 = \omega T_2 = 0, \ \ \ \omega T_1 = T_0.
\] 
The gradings of $T_0$, $T_1$, and $T_2$ are given by $0$, $0$, and $2i - 1$, respectively. The complex $-X_i$ is displayed on the right in Figure~\ref{fig:iotacomplexes}.
\end{example}

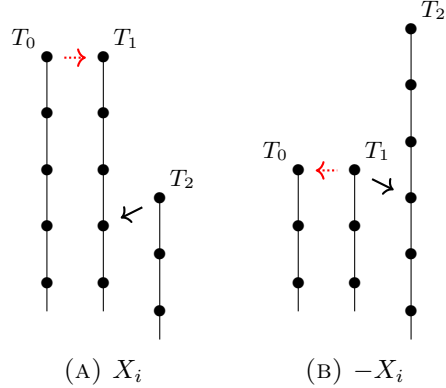
\begin{figure}[h!]
    \centering
    \begin{subfigure}[b]{0.23\textwidth}
    \centering
        \begin{tikzpicture}[scale=.75, yscale=.5]
	\begin{scope}[thin, black]
		\draw [-] (-1, 2) -- (-1, -7); 
		\draw [-] (0, 2) -- (0, -7); 
		\draw [-] (1, -3) -- (1, -8); 
		\node[anchor=south east] (x) at (-1, 2) {\footnotesize $T_0$};
		\node[anchor=south west] (z) at (0, 2) {\footnotesize $T_1$};
		\node[anchor=south west] (y) at (1, -3) {\footnotesize $T_2$};
		\foreach \x in {2, 0, -2,-4,-6}
    			\draw (-1,\x ) -- (-1,\x ) node (\x x) {$\bullet$};
		\foreach \z in {2, 0, -2,-4,-6}
    			\draw (0,\z ) -- (0,\z ) node (\z z) {$\bullet$};
		 \foreach \y in {-3,-5,-7}
    			\draw (1,\y ) -- (1,\y ) node (\y y) {$\bullet$};
		\draw [thick, ->] (-3y) -- (-4z);
		\draw [thick, densely dotted, red, ->] (2x) -- (2z);
	\end{scope}	
\end{tikzpicture}
        \caption{$X_i$}
    \end{subfigure}
    \begin{subfigure}[b]{0.23\textwidth}
    \centering
        \begin{tikzpicture}[scale=.75, yscale=.5]
	\begin{scope}[thin, black]
		\draw [-] (-1, -2) -- (-1, -7); 
		\draw [-] (0, -2) -- (0, -7); 
		\draw [-] (1, 3) -- (1, -8); 
		\node[anchor=south east] (x) at (-1, -2) {\footnotesize $T_0$};
		\node[anchor=south west] (y) at (0, -2) {\footnotesize $T_1$};
		\node[anchor=south west] (z) at (1, 3) {\footnotesize $T_2$};
		\foreach \x in {-2,-4,-6}
    			\draw (-1,\x ) -- (-1,\x ) node (\x x) {$\bullet$};
		\foreach \y in {-2,-4,-6}
    			\draw (0,\y ) -- (0,\y ) node (\y y) {$\bullet$};
		 \foreach \z in {3, 1, -1, -3,-5,-7}
    			\draw (1,\z ) -- (1,\z ) node (\z z) {$\bullet$};
		\draw [thick, ->] (-2y) -- (-3z) ;
		\draw [thick, densely dotted, red, <-] (-2x) -- (-2y);
	\end{scope}	
\end{tikzpicture}
        \caption{$- X_i$}
    \end{subfigure}
    \caption{Depiction of $X_i$ (left) and its dual (right). Here, $i = 3$. Vertical lines represent multiplication by $U$. Red dotted horizontal arrows represent the action of $\omega = 1+\iota$ and  black diagonal arrows represent the differential $\partial$; both are $U$-equivariant.
}\label{fig:iotacomplexes}
\end{figure}

It is shown in \cite[Theorem 1.8]{Stoffregenconnectedsum} (see also \cite[Theorem 1.7]{DaiManolescu}) that the $X_i$ are realized by the Brieskorn spheres
\[
h(\Sigma(2i+1, 4i+1, 4i+3)) = X_i.\footnote{Again, this equality should be interpreted as holding up to overall grading shift.} 
\]
Following work by the first author in the Pin(2)-case \cite{Stoffregenconnectedsum}, it was established in \cite[Theorem 1.7]{DaiManolescu} that the $X_i$ are linearly independent in $\mfI$. Hence their span constitutes a $\Z^\infty$-subgroup of $\mfI$. This was further studied by the first and third authors in \cite{DaiStoffregen}, in which it was shown that
\[
h(\Theta_{\text{SF}}) = h(\Theta_{\text{AR}}) = \text{span}\{X_i\}_{i \in \N} \cong \Z^{\infty}.
\]
Here, $\Theta_{\text{SF}}$ and $\Theta_{\text{AR}}$ are the subgroups of $\Theta^3_{\Z}$ generated by Seifert fibered spaces and almost-rational (AR) plumbed manifolds, respectively. It is currently unknown whether there are any integer homology spheres $Y$ for which $h(Y) \notin \text{span}\{X_i\}_{i \in \N}$.\footnote{Such an example would show that $\Theta^3_{\Z}$ is not generated by Seifert fibered spaces.} For more computations involving involutive Floer homology, see e.g.\ \cite{HendricksHomLidman}, \cite{Dai}.

There are thus several disadvantages to working with $\mfI$. First, it turns out that the set of $\inv$-complexes up to local equivalence is very large; apparently much larger than the span of the $X_i$. In particular, the authors do not in general know how to give an explicit parameterization of the different elements of $\mfI$. In addition, while the group operation on $\text{span}\{X_i\}_{i \in \N}$ is evidently straightforward, there are certainly more exotic elements of $\mfI$, as illustrated by the following 2-torsion example:

\begin{example}\label{ex:selfdual}
Let $X$ be the complex displayed in Figure~\ref{fig:selfdual}. This is generated (over $\ff[U]$) by five generators $T_{-2}, T_{-1}, T_0, T_1, \text{ and } T_2$, with
\[
\partial T_{-2} = \partial T_0 = \partial T_1 = 0, \ \ \ \partial T_{-1} = U^2T_{-2}, \ \ \ \partial T_2 = U^2 T_1
\]
and
\[
\omega T_{-2} = \omega T_1 = \omega T_2 = 0, \ \ \ \omega T_{-1} = UT_0, \ \ \ \omega T_0 = T_1.
\]
Note that $\omega^2$ is not zero, but is zero up to homotopy. It is straightforward to check that $X$ is self-dual, with the duality map sending $T_i$ to $T_{-i}$. Example~\ref{ex:selfdual} fits into a larger family of self-dual complexes, which we leave to the reader to define.
\end{example}

\begin{figure}[h!]
\center
\includegraphics[scale=0.85]{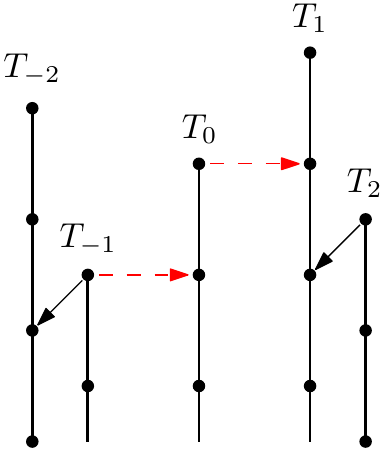}
\caption{A self-dual $\inv$-complex. Red dotted horizontal arrows represent the action of $\omega = 1+\iota$ and  black diagonal arrows represent the differential $\partial$; both are $U$-equivariant. The central generator $T_0$ has grading zero.}
\label{fig:selfdual}
\end{figure}

It is easy to check by hand that Example~\ref{ex:selfdual} is not locally equivalent to the identity element $\cC(0)$. This obstructs the existence of a total order on $\mfI$, since $X$ would then have to be either strictly greater than $\cC(0)$ or strictly less than $\cC(0)$; this is a contradiction, since $X$ is self-dual. Again, however, we stress that we do not know whether $X$ occurs as the $\inv$-complex of any actual 3-manifold.

Using $\mfI$ to initiate a general study of the homology cobordism group is thus difficult without either (a) obtaining a better understanding of the elements of $\mfI$, or (b) proving that the image of $h$ lies in some more manageable subgroup. In this paper, we circumvent this problem by introducing a slightly modified group $\smash{\mfIhat}$, whose elements we are able to parameterize explicitly. As discussed in the introduction, the key feature of $\smash{\mfIhat}$ which will allow us to establish this parameterization will be the presence of a total order.


\section{Almost Local Equivalence}\label{sec:3}
In this section, we define a modification $\smash{\mfIhat}$ of $\mfI$, which we call the \textit{almost local equivalence group}. This will come with its own homomorphism from $\Theta^3_{\Z}$, factoring through $\smash{\mfI}$:
\[
\widehat{h} \co \Theta^3_{\Z} \rightarrow \mfI \rightarrow \mfIhat.
\] 
The group $\mfIhat$ is constructed in analogy to $\mfI$ by considering the set of \textit{almost $\inv$-complexes} up to the appropriate equivalence relation. Our first order of business will be to define these terms.

\subsection{Almost $\iota$-complexes}
\begin{definition}
Let $C_1$ and $C_2$ be free, finitely-generated, $\Z$-graded chain complexes over $\F[U]$, with $\deg(U)=-2$. Two grading-preserving $\ff[U]$-module homomorphisms
\[ f, g \co C_1 \rightarrow C_2\]
are \emph{homotopic mod $U$}, denoted $f \simeq g \mod U$, if there exists an $\ff[U]$-module homomorphism $H \co C_1 \rightarrow C_2$ such that $H$ increases grading by one and
\[ f + g + H \circ \d + \d \circ H \in \im U.\]
\end{definition}

\begin{definition}\label{def:aic}
An \emph{almost $\iota$-complex} $\cC = (C, \iotabar)$ consists of the following data:
\begin{itemize}
	\item A free, finitely-generated, $\Z$-graded chain complex $C$ over $\F[U]$, with
	\[
	U^{-1}H_*(C) \cong \F[U, U^{-1}].
	\] 
	Here, $U$ has degree $-2$ and $U^{-1}H_*(C)$ is supported in even gradings.
	\item A grading-preserving, $\ff[U]$-module homomorphism $\iotabar \co C \rightarrow C$ such that
	\[ \iotabar \circ \d + \d \circ \iotabar \in \im U \qquad \text{ and } \qquad \iotabar^2 \simeq \id \mod U. \]
\end{itemize}
Abusing notation slightly, we write
\[
\omega = 1 + \iotabar.
\]
Note that $\iotabar$ is \textit{not} a chain map; instead, it only commutes with $\partial$ modulo $U$.\footnote{Hence $\iotabar$ does not induce an action on the homology $H_*(C)$.} We likewise relax the condition that $\iotabar$ be a homotopy involution (from Definition~\ref{def:iotacomplex}) by only requiring that this hold mod $U$.
\end{definition}

\begin{remark}\label{rem:gradingconventiontwo}
As in Remark~\ref{rem:gradingconvention}, we also impose the convention that the maximally graded $U$-nontorsion element in $H_*(C)$ has grading zero. The enterprising reader will find it easy to re-phrase the results of the paper without this restriction, which is introduced largely for convenience.
\end{remark}

We similarly need a notion of maps between almost $\inv$-complexes:

\begin{definition}\label{def:aim}
An \emph{almost $\iota$-morphism} from $\cC_1 = (C_1, \iotabar_1)$ to $\cC_2 = (C_2, \iotabar_2)$ is a grading-preserving, $U$-equivariant chain map
\[ f \co C_1 \rightarrow C_2 \]
such that
\[ f \circ \iotabar \simeq \iotabar \circ f \mod U. \]
\end{definition}

Roughly speaking, the passage from $\inv$-complexes to almost $\inv$-complexes may be thought of as replacing all functional equalities involving $\inv$ with congruences modulo~$U$. 

We have the obvious definition of local equivalence between two almost $\inv$-complexes:

\begin{definition}\label{def:localmap}
Two almost $\inv$-complexes $(C, \inv)$ and $(C', \inv')$ are said to be {\em locally equivalent} if there exist almost $\inv$-morphisms
\[
f : C \to C', \ \ g : C' \to C
\]
between them which induce isomorphisms on homology after inverting the action of $U$. We refer to $f$ as a \textit{local map} from $(C, \inv)$ to $(C', \inv')$, and similarly refer to $g$ as a local map in the other direction. 
\end{definition}

We now show that the set of almost $\inv$-complexes up to local equivalence forms a group. The reader familiar with the construction of $\mfI$ may wish to skip to the next subsection; the current series of algebraic verifications mirrors those in \cite[Section 8]{HMZ:involutive-sum} essentially by replacing equalities everywhere with congruences modulo~$U$.

We begin by recording the proposed identity, inverse, and group operation:

\begin{definition}\label{def:identity}
We define $\cC(0)$ to be the almost $\inv$-complex consisting of a single $\ff[U]$-tower starting in grading zero, together with the identity map on this complex. 
\end{definition}

\begin{definition}\label{def:aicdual}
The \textit{dual} of an almost $\iota$-complex $\cC = (C,\iotabar)$ is defined to be $\cC^\vee = (C^\vee,\iotabar^\vee)$, where $C^\vee$ is the dual in the category of $\F[U]$-chain complexes, and $\iotabar^\vee$ is given by 
\[
(\iotabar^\vee y)(x)=y (\iotabar x)
\]
for $y\in C^\vee$ and $x\in C$. 
\end{definition}

\begin{definition}\label{defn:product}
The \textit{tensor product} of two almost $\iota$-complexes $\cC_1 = (C_1, \iotabar_1)$ and $\cC_2 = (C_2, \iotabar_2)$ is defined to be
\[ \cC_1 \otimes \cC_2 = (C_1 \otimes_{\F[U]} C_2, \iotabar_1 \otimes \iotabar_2). \]
\end{definition}

We now check that these are well-defined operations on the set of almost $\inv$-complexes modulo local equivalence:
\begin{lemma}
The dual of an almost $\inv$-complex is an almost $\inv$-complex. Moreover, the dual of an almost $\inv$-morphism is an almost $\inv$-morphism, which is local if the original map is local.
\end{lemma}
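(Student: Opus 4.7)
The plan is to verify each defining property of an almost $\iota$-complex (and of an almost $\iota$-morphism) for the dual, by exploiting the basic duality identities $(\phi\psi)^\vee = \psi^\vee\phi^\vee$ and $(U\phi)^\vee = U\phi^\vee$ in the category of $\F[U]$-chain complexes. The strategy is to transfer each mod-$U$ congruence on $C$ to a mod-$U$ congruence on $C^\vee$ simply by dualizing the witness maps.

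First I would check the structural conditions. Since $C$ is free and finitely generated over $\F[U]$, so is $C^\vee$, with the grading convention that an element dual to one in grading $d$ sits in grading $-d$. The identity $U^{-1}H_*(C^\vee) \cong \F[U,U^{-1}]$ follows because localizing at $U$ turns the natural pairing into a perfect pairing between two copies of $\F[U,U^{-1}]$. Next, given that $\iotabar\d + \d\iotabar = UA$ for some $\F[U]$-linear $A \colon C \to C$, applying $\vee$ yields $\d^\vee \iotabar^\vee + \iotabar^\vee \d^\vee = U A^\vee$, so $\iotabar^\vee$ commutes with $\d^\vee$ modulo $U$. For the homotopy-involution condition, writing $\iotabar^2 + \id + H\d + \d H = UB$ for some degree-$1$ map $H$ and some $B$, dualization gives
\[
(\iotabar^\vee)^2 + \id + \d^\vee H^\vee + H^\vee \d^\vee \;=\; U B^\vee,
\]
so $H^\vee$ (which raises grading by one in $C^\vee$) is the required mod-$U$ homotopy between $(\iotabar^\vee)^2$ and $\id$. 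This establishes that $\cC^\vee$ is an almost $\iota$-complex.

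For the second statement, given an almost $\iota$-morphism $f \colon \cC_1 \to \cC_2$, its dual $f^\vee \colon C_2^\vee \to C_1^\vee$ is a grading-preserving, $U$-equivariant chain map (standard). Starting from $f\iotabar_1 + \iotabar_2 f = K\d + \d K + UL$, dualization produces
\[
\iotabar_1^\vee f^\vee + f^\vee \iotabar_2^\vee \;=\; \d^\vee K^\vee + K^\vee \d^\vee + U L^\vee,
\]
which is precisely the almost $\iota$-morphism condition for $f^\vee$. For locality, note that since $U^{-1}H_*(C_i) \cong \F[U,U^{-1}]$, the map $U^{-1}H_*(f)$ is an isomorphism iff it is multiplication by a unit; its dual $U^{-1}H_*(f^\vee)$ is then multiplication by the same unit on the dual copy of $\F[U,U^{-1}]$ and hence is also an isomorphism.

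There is no conceptual obstacle; the only thing to watch is the bookkeeping of gradings and the insertion of $\vee$'s at each step to be sure that "the homotopy for the dual" is itself in the correct degree. All the required manipulations are formal consequences of $\F[U]$-duality being a contravariant functor that commutes with $U$-multiplication, so the proof reduces to applying $\vee$ to the witness equations and reading off the result.
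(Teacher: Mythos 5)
Your proof is correct, and it fills in exactly what the paper leaves as ``Straightforward'': dualize the witness maps/homotopies for each mod-$U$ congruence, using that duality is a contravariant $U$-equivariant functor. The grading bookkeeping (in particular that $H^\vee$ still raises degree by one under the sign-flip convention) and the locality argument via the perfect pairing after inverting $U$ are handled correctly.
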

\begin{proof}
Straightforward.
\end{proof}

It follows from this that:

\begin{lemma}
The dual of (the local equivalence class of) an almost $\inv$-complex is well-defined up to local equivalence.
\end{lemma}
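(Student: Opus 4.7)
The plan is to observe that this lemma is essentially a repackaging of the preceding lemma. Suppose $\cC_1 = (C_1, \iotabar_1)$ and $\cC_2 = (C_2, \iotabar_2)$ are locally equivalent almost $\iota$-complexes. By Definition~\ref{def:localmap}, this means there exist almost $\iota$-morphisms $f \co C_1 \to C_2$ and $g \co C_2 \to C_1$, each inducing an isomorphism on $U^{-1}H_*$. To prove the claim, I need only produce local almost $\iota$-morphisms $\cC_1^\vee \to \cC_2^\vee$ and $\cC_2^\vee \to \cC_1^\vee$.

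The key step is to invoke the preceding lemma: the dual of a local almost $\iota$-morphism is again a local almost $\iota$-morphism, but pointing in the opposite direction. Thus $f^\vee \co C_2^\vee \to C_1^\vee$ and $g^\vee \co C_1^\vee \to C_2^\vee$ are almost $\iota$-morphisms, and both induce isomorphisms on $U^{-1}H_*$ (since inverting $U$ and then taking duals of finitely generated complexes commutes, and the dual of an isomorphism is an isomorphism). Together, $g^\vee$ and $f^\vee$ furnish the required pair of almost $\iota$-morphisms in the two directions between $\cC_1^\vee$ and $\cC_2^\vee$, witnessing their local equivalence.

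This shows that the dualization operation $\cC \mapsto \cC^\vee$ descends from the set of almost $\iota$-complexes to the quotient by local equivalence, so the local equivalence class of $\cC^\vee$ depends only on the local equivalence class of $\cC$. There is no real obstacle here; the only thing one has to be slightly careful about is the direction-reversal in taking duals, which is exactly why a \emph{pair} of local maps (in both directions) is required in Definition~\ref{def:localmap}, and is precisely what makes this formal argument go through.
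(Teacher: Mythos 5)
Your argument is correct and is essentially the same as the paper's: given local maps $f$ and $g$ between $\cC$ and $\cC'$, one dualizes to get local maps $f^\vee$ and $g^\vee$ between the duals, invoking the preceding lemma that dualization preserves almost $\iota$-morphisms and locality. The extra remarks about direction-reversal and commuting duality with $U$-localization are accurate elaborations, not departures from the paper's proof.
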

\begin{proof}
If $f$ and $g$ are local maps going between $\cC$ and $\cC'$, then $f^\vee$ and $g^\vee$ are local maps between their duals. 
\end{proof}

For tensor products, we have the slightly more complicated lemma:

\begin{lemma}\label{lem:tensormapnatural}
Let $f$ and $f'$ be grading-preserving $\ff[U]$-module homomorphisms from $\cC_1$ to $\cC_1'$ such that
\[
\partial f + f \partial \equiv \partial f' + f' \partial \equiv 0 \bmod U,
\]
and likewise let $g$ and $g'$ be grading-preserving $\ff[U]$-module homomorphisms from $\cC_2$ to $\cC_2'$ such that 
\[
\partial g + g \partial \equiv \partial g' + g' \partial \equiv 0 \bmod U.
\]
If $f \simeq f' \bmod U$ and $g \simeq g' \bmod U$, then $f \otimes g \simeq f' \otimes g' \bmod U$.
\end{lemma}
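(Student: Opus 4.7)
The plan is to imitate the classical telescoping homotopy used to prove that tensor products of chain homotopic maps are chain homotopic, and to verify that the extra error terms are all congruent to zero modulo $U$. Pick $U$-equivariant, grading-increasing-by-one homomorphisms $H_f \co C_1 \to C_1'$ and $H_g \co C_2 \to C_2'$ witnessing the hypothesized homotopies, so that
\[
f + f' + \partial H_f + H_f \partial \in \im U
\qquad\text{and}\qquad
g + g' + \partial H_g + H_g \partial \in \im U.
\]
I would then simply propose the homotopy
\[
H \;=\; H_f \otimes g \;+\; f' \otimes H_g
\]
from $f\otimes g$ to $f'\otimes g'$ modulo $U$.

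The verification is a direct calculation. Using the Leibniz rule for $\partial$ on the tensor product $C_1 \otimes_{\F[U]} C_2$ (and working over $\F = \F_2$, so all signs are trivial), one computes
\[
\partial(H_f \otimes g) + (H_f \otimes g)\partial \;=\; (\partial H_f + H_f \partial)\otimes g \;+\; H_f \otimes (\partial g + g\partial),
\]
and similarly
\[
\partial(f' \otimes H_g) + (f' \otimes H_g)\partial \;=\; (\partial f' + f'\partial)\otimes H_g \;+\; f' \otimes (\partial H_g + H_g \partial).
\]
The second term in the first equation lies in $\im U$ because $g$ is an almost $\iota$-morphism, hence $\partial g + g\partial \in \im U$; the first term in the second equation lies in $\im U$ because $f'$ is an almost $\iota$-morphism. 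After substituting the defining relations for $H_f$ and $H_g$ in the remaining terms, one obtains
\[
\partial H + H\partial \;\equiv\; (f + f')\otimes g \;+\; f' \otimes (g + g') \;\equiv\; f\otimes g + f'\otimes g' \pmod{U},
\]
the last equality being cancellation of the repeated term $f' \otimes g$ in characteristic two.

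There is essentially no main obstacle beyond bookkeeping. The only thing to watch is that each ``mod $U$'' error term that appears along the way is either of the form $U\cdot(\text{something}) \otimes (\text{something})$ or $(\text{something}) \otimes U\cdot(\text{something})$; since $U$ is central and the tensor product is taken over $\F[U]$, every such error is a genuine element of $\im U$ in $\Hom_{\F[U]}(C_1 \otimes C_2,\, C_1' \otimes C_2')$, which is exactly what the definition of homotopy mod $U$ requires.
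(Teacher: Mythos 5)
Your proof is correct and takes essentially the same approach as the paper: the paper reduces by transitivity to changing one factor at a time, using the one-factor homotopy $s \otimes g$, whereas you write the combined telescoping homotopy $H_f \otimes g + f' \otimes H_g$ in a single step. The computation and the hypotheses used to absorb the cross terms ($\partial g + g\partial \equiv 0$ and $\partial f' + f'\partial \equiv 0 \bmod U$) are identical, so the difference is purely cosmetic.
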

\begin{proof}
By considering $f \otimes g \simeq f' \otimes g \bmod U$ and $f' \otimes g \simeq f' \otimes g' \bmod U$ separately, it suffices to establish the case when one of the maps is held constant. Without loss of generality, assume $g = g'$. Let
\[
f + f' \equiv \partial s + s\partial \mod U.
\]
We define our homotopy $H \co \cC_1 \otimes \cC_2 \rightarrow \cC_1' \otimes \cC_2'$ by
\[
H = s \otimes g.
\]
Then 
\begin{align*}
\partial H(a \otimes b) + H\partial(a \otimes b) &= \partial(sa \otimes gb) + H(\partial a \otimes b + a \otimes \partial b) \\
&= \partial sa \otimes gb + sa \otimes \partial gb + s\partial a \otimes gb + sa \otimes g\partial b \\
&\equiv (\partial s + s \partial) a \otimes gb \mod U \\
&\equiv (f \otimes g)(a \otimes b) + (f' \otimes g)(a \otimes b) \mod U,
\end{align*}
where in the second-to-last line we have used the fact that $\partial g + g \partial \equiv 0 \bmod U$.
\end{proof}

\begin{lemma}\label{lem:tensornatural}
The tensor product of two almost $\iota$-complexes is an almost $\iota$-complex. Moreover, the tensor product of two almost $\inv$-morphisms is an almost $\inv$-morphism, which is local if the two factors are local.
\end{lemma}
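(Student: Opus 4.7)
The plan is to verify the three clauses of Definition~\ref{def:aic} for the pair $(C_1\otimes_{\F[U]}C_2,\iotabar_1\otimes\iotabar_2)$, and then to verify the two clauses of Definition~\ref{def:aim} for $f\otimes g$, both times reducing the homotopy computations to applications of Lemma~\ref{lem:tensormapnatural}.

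First I would dispose of the chain-complex axioms. Freeness, finite generation, and the $\Z$-grading all pass to the tensor product over $\F[U]$ for standard reasons. For the $U$-inverted homology, I would note that localization commutes with tensor product, so $U^{-1}(C_1\otimes_{\F[U]}C_2)\cong U^{-1}C_1\otimes_{\F[U,U^{-1}]}U^{-1}C_2$, and then invoke the Künneth formula over the PID $\F[U,U^{-1}]$ together with the hypothesis $U^{-1}H_\ast(C_i)\cong\F[U,U^{-1}]$ to conclude that $U^{-1}H_\ast(C_1\otimes C_2)\cong\F[U,U^{-1}]$. Parity of gradings is preserved as well.

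Next I would check the two conditions on $\iotabar_1\otimes\iotabar_2$. The Leibniz expansion gives
\[
(\iotabar_1\otimes\iotabar_2)\partial+\partial(\iotabar_1\otimes\iotabar_2)=(\iotabar_1\partial_1+\partial_1\iotabar_1)\otimes\iotabar_2+\iotabar_1\otimes(\iotabar_2\partial_2+\partial_2\iotabar_2),
\]
which lies in $\im U$ since each of the summands inside the parentheses does. For $(\iotabar_1\otimes\iotabar_2)^2=\iotabar_1^2\otimes\iotabar_2^2$, I would apply Lemma~\ref{lem:tensormapnatural} with $f=\iotabar_1^2$, $f'=\id$, $g=\iotabar_2^2$, $g'=\id$. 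The required chain-map-mod-$U$ hypotheses on $\iotabar_i^2$ follow from the identity $\partial\iotabar_i^2+\iotabar_i^2\partial=\iotabar_i(\partial\iotabar_i+\iotabar_i\partial)+(\partial\iotabar_i+\iotabar_i\partial)\iotabar_i$, which is in $\im U$ because $U$ is central and $\iotabar_i$ is $\F[U]$-linear; the identity trivially satisfies them. The lemma then yields $\iotabar_1^2\otimes\iotabar_2^2\simeq\id\otimes\id\bmod U$, as required.

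For the statement about morphisms, grading preservation, $U$-equivariance, and the chain-map property of $f\otimes g$ are standard for tensor products of chain maps. Almost $\iota$-equivariance follows by a second application of Lemma~\ref{lem:tensormapnatural}: writing $(f\otimes g)(\iotabar_1\otimes\iotabar_2)=(f\iotabar_1)\otimes(g\iotabar_2)$ and $(\iotabar_1'\otimes\iotabar_2')(f\otimes g)=(\iotabar_1' f)\otimes(\iotabar_2' g)$, the hypotheses $f\iotabar_1\simeq\iotabar_1' f\bmod U$ and $g\iotabar_2\simeq\iotabar_2' g\bmod U$ feed directly into the lemma once one checks that each of the four composites commutes with $\partial$ modulo $U$, which is immediate from the almost-chain-map property of the $\iotabar$'s combined with the fact that $f$ and $g$ are honest chain maps. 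The locality claim is handled by the Künneth formula applied to $U^{-1}H_\ast$: a tensor product of $\F[U,U^{-1}]$-isomorphisms between free rank-one modules is again an isomorphism.

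The only real bookkeeping hazard, which I would flag as the main (mild) obstacle, is making sure that at every appeal to Lemma~\ref{lem:tensormapnatural} the ``chain map mod $U$'' hypothesis is verified for all four maps $f,f',g,g'$ involved, since the composites $\iotabar_i^2$, $f\iotabar_i$, and $\iotabar_i'f$ are not themselves honest chain maps.
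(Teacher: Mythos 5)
Your proposal is correct and follows essentially the same route as the paper: a direct Leibniz computation for the $\partial$-condition, two applications of Lemma~\ref{lem:tensormapnatural} (for $\iotabar_1^2\otimes\iotabar_2^2\simeq\id$ and for the almost $\iota$-equivariance of $f\otimes g$), and the K\"unneth formula for the $U$-inverted homology and locality. Your explicit verification that $\iotabar_i^2$, $f\iotabar_i$, and $\iotabar_i'f$ commute with $\partial$ modulo $U$ is the same check the paper makes, just spelled out in slightly more detail.
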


\begin{proof}
First we check that $(\iotabar_1 \otimes \iotabar_2) \d + \d (\iotabar_1 \otimes \iotabar_2)  \in \im U $. Let $a \in \cC_1$ and $b \in \cC_2$. 
\begin{eqnarray*}
&&(\iotabar_1 \otimes \iotabar_2) \d (a \otimes b)+ \d(\iotabar_1 \otimes \iotabar_2)  (a \otimes b) 
\\
&=& (\iotabar_1 \otimes \iotabar_2) ( \d a \otimes b + a \otimes \d b) +  \d ( \iotabar_1 a \otimes \iotabar_2 b)
\\
&=&  \iotabar_1 \d a \otimes  \iotabar_2b +  \iotabar_1 a \otimes  \iotabar_2\d b +  \d  \iotabar_1 a \otimes \iotabar_2 b + \iotabar_1 a \otimes \d \iotabar_2 b
\\
&=& (\iotabar_1 \d + \d \iotabar_1)a \otimes  \iotabar_2 b
+\iotabar_1 a\otimes (\iotabar_2 \d + \d \iotabar_2)b.
\end{eqnarray*}
Since $\cC_1$ and $\cC_2$ are almost $\iota$-complexes, we are done. To see that $(\iotabar_1 \otimes \iotabar_2)^2 \simeq \id \bmod U$, note that the left-hand side is just $\iotabar_1^2 \otimes \iotabar_2^2$. Since 
\begin{align*}
&\iotabar_1^2 \simeq \id_1 \mod U \text{ and} \\
&\iotabar_2^2 \simeq \id_2 \mod U,
\end{align*}
by Lemma~\ref{lem:tensormapnatural} we have that $(\iotabar_1 \otimes \iotabar_2)^2 \simeq \id_1 \otimes \id_2 = \id \mod U$, as desired. By the K\"unneth formula, $U^{-1}H_*(\cC_1 \otimes \cC_2) \cong \F[U, U^{-1}]$. This shows that $\cC_1 \otimes \cC_2$ is an almost $\inv$-complex. Now let $f \co \cC_1 \rightarrow \cC_1'$ and $g \co \cC_2 \rightarrow \cC_2'$ be two almost $\inv$-morphisms. Then 
\begin{align*}
&f \iotabar_1 \simeq \iotabar_1' f \mod U \text{ and} \\ 
&g \iotabar_2 \simeq \iotabar_2' g \hspace{1.5pt}\mod U. 
\end{align*}
We wish to show that $(f \otimes g)(\iotabar_1 \otimes \iotabar_2) \simeq (\iotabar_1' \otimes \iotabar_2')(f \otimes g) \bmod U$. This follows immediately from Lemma~\ref{lem:tensormapnatural}, observing that the maps $f \iotabar_1$, $\iotabar_1' f$, $g \iotabar_2$, and $\iotabar_2' g$ all commute with $\d$ modulo $U$. The claim about locality is a consequence of the K\"unneth formula.
\end{proof}

It follows from this that:

\begin{lemma}\label{lem:aictensor}
The tensor product of two almost $\inv$-complexes is well-defined up to local equivalence.
\end{lemma}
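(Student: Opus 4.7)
The plan is to show that if $\cC_1$ is locally equivalent to $\cC_1'$ and $\cC_2$ is locally equivalent to $\cC_2'$, then $\cC_1 \otimes \cC_2$ is locally equivalent to $\cC_1' \otimes \cC_2'$. Combined with Lemma~\ref{lem:tensornatural}, which already tells us that the tensor product of two almost $\iota$-complexes is again an almost $\iota$-complex, this gives a well-defined binary operation on the set of local equivalence classes. Virtually all of the substantive work has been done in the preceding lemmas, so what remains is to package their conclusions appropriately.

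First, I would unwind the hypothesis. By Definition~\ref{def:localmap}, there exist almost $\iota$-morphisms $f_1 \co \cC_1 \to \cC_1'$ and $g_1 \co \cC_1' \to \cC_1$ inducing isomorphisms on $U^{-1}H_*$, and similarly almost $\iota$-morphisms $f_2 \co \cC_2 \to \cC_2'$ and $g_2 \co \cC_2' \to \cC_2$ with the same property. I would then form the tensor product maps
\[
f_1 \otimes f_2 \co \cC_1 \otimes \cC_2 \to \cC_1' \otimes \cC_2' \qquad \text{and} \qquad g_1 \otimes g_2 \co \cC_1' \otimes \cC_2' \to \cC_1 \otimes \cC_2.
\]

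Next, I would invoke Lemma~\ref{lem:tensornatural}, whose second statement asserts that the tensor product of two almost $\iota$-morphisms is again an almost $\iota$-morphism, and that locality is preserved under tensor product. (The underlying locality input uses the K\"unneth formula to conclude that $f_1 \otimes f_2$ and $g_1 \otimes g_2$ induce isomorphisms on $U^{-1}H_*$, since each factor does.) Hence $f_1 \otimes f_2$ and $g_1 \otimes g_2$ are local maps between $\cC_1 \otimes \cC_2$ and $\cC_1' \otimes \cC_2'$, proving that they are locally equivalent. This is essentially a one-line argument once the groundwork has been laid, and I expect no real obstacle beyond ensuring that we have correctly cited the preceding compatibility results; the only mildly subtle point is that one must check locality at the level of maps (rather than working with homotopy equivalence classes), but this is precisely what Lemma~\ref{lem:tensornatural} already guarantees.
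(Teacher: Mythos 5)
Your proof is correct and matches the paper's approach: both take local maps $f_i, g_i$ in each factor, tensor them, and appeal to Lemma~\ref{lem:tensornatural} to conclude that $f_1 \otimes f_2$ and $g_1 \otimes g_2$ are local maps between the tensor product complexes. The paper's version is a single sentence stating the same thing; your write-up just unfolds it more explicitly.
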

\begin{proof}
If $f \co \cC_1 \rightarrow \cC_1'$ and $g \co \cC_2 \rightarrow \cC_2'$ are local maps, then $f \otimes g$ is a local map from $\cC_1 \otimes \cC_2$ to $\cC_1' \otimes \cC_2'$.
\end{proof}

It is evident that multiplication by $\cC(0)$ returns the same almost $\inv$-complex. We are thus left with verifying:

\begin{lemma}\label{lem:aicdual}
For any almost $\inv$-complex $\cC$, the tensor product $\cC \otimes \cC^\vee$ is locally equivalent to $\cC(0)$.
\end{lemma}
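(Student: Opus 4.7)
The plan is to exhibit explicit local maps $\eta \co \cC(0) \to \cC \otimes \cC^\vee$ and $\epsilon \co \cC \otimes \cC^\vee \to \cC(0)$ realizing the unit and counit of the tensor--Hom adjunction, and to verify that both are almost $\iota$-morphisms using only the hypothesis that $\iotabar^2 \simeq \id \bmod U$. Concretely, fix a homogeneous $\F[U]$-basis $\{e_i\}$ of $C$ with dual basis $\{e_i^\vee\}$ of $C^\vee$, and set
\[
\eta(1) = \sum_i e_i \otimes e_i^\vee, \qquad \epsilon(x \otimes f) = f(x).
\]
Under the canonical identification $\cC \otimes \cC^\vee \cong \mathrm{End}_{\F[U]}(C)$, the element $\eta(1)$ corresponds to $\id_C$ and $\epsilon$ corresponds to the natural trace pairing. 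Both are grading-preserving $\F[U]$-linear chain maps: the differential on $\cC(0)$ is zero, so $\partial \epsilon$ vanishes, and $\epsilon \partial$ vanishes as well because the two dual-pairing terms cancel in characteristic two.

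The heart of the argument is verifying the almost $\iota$-morphism condition. Under the Hom identification, the tensor involution $\iotabar \otimes \iotabar^\vee$ corresponds to the conjugation action $\phi \mapsto \iotabar \circ \phi \circ \iotabar$ on $\mathrm{End}(C)$. For $\eta$, since the involution on $\cC(0)$ is the identity, I must show $(\iotabar \otimes \iotabar^\vee)\eta(1) \simeq \eta(1) \bmod U$; this translates to $\iotabar \circ \id \circ \iotabar = \iotabar^2 \simeq \id \bmod U$, which is exactly the defining condition of an almost $\iota$-complex. Any $\F[U]$-module homotopy $H \co C \to C$ witnessing $\iotabar^2 + \id \equiv \partial H + H \partial \bmod U$ transports through the adjunction to provide the required homotopy for $\eta$. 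For $\epsilon$, the computation
\[
\epsilon\bigl((\iotabar \otimes \iotabar^\vee)(x \otimes f)\bigr) = (\iotabar^\vee f)(\iotabar x) = f(\iotabar^2 x)
\]
reduces the problem to exhibiting $f(\iotabar^2 x) \simeq f(x) \bmod U$, which is witnessed by $\tilde{H}(x \otimes f) = f(Hx)$ for the same $H$; this is a short verification in the same spirit as Lemma~\ref{lem:tensormapnatural}.

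Finally, locality follows by inverting $U$. By the K\"unneth formula applied to $U^{-1}(\cC \otimes \cC^\vee) \cong U^{-1}\cC \otimes_{\F[U,U^{-1}]} U^{-1}\cC^\vee$, valid since both factors are free over $\F[U,U^{-1}]$ after inverting $U$, we obtain $U^{-1}H_*(\cC \otimes \cC^\vee) \cong \F[U,U^{-1}]$. The map $\eta$ sends the generator of $U^{-1}H_*(\cC(0))$ to the class of $\id$, which generates $U^{-1}H_*(\mathrm{End}(C))$, and $\epsilon$ becomes the standard non-degenerate evaluation pairing; both visibly induce isomorphisms on $U^{-1}H_*$. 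The main obstacle is bookkeeping: ensuring that the module homotopies supplied by the mod-$U$ hypothesis on $\iotabar$ transport correctly through the tensor--Hom identification while preserving all congruences modulo $U$. This is notationally involved but is a routine verification modelled on Lemma~\ref{lem:tensormapnatural}.
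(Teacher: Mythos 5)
Your proof is correct and is essentially the paper's argument: your $\epsilon$ is exactly the paper's contraction map $c(x \otimes y) = y(x)$, and your homotopy $\tilde H(x\otimes f) = f(Hx)$ is the paper's homotopy $H(a\otimes b) = b(sa)$. The only cosmetic difference is that, where the paper constructs $c$ and then obtains the map $\F[U]^\vee \to \cC\otimes\cC^\vee$ by applying duality (using the earlier lemma that duals of local almost $\iota$-morphisms are local almost $\iota$-morphisms), you instead write down the coevaluation $\eta$ explicitly; but $\eta$ is precisely $c^\vee$ under the canonical identifications, so this is the same proof in slightly different clothing. The $\mathrm{End}_{\F[U]}(C)$ reformulation is a clean way to organize the verification that $(\iotabar\otimes\iotabar^\vee)\eta(1)$ corresponds to $\iotabar^2$, though one should be a little careful that the conjugation picture is only an equality of underlying maps, not a claim that $\iotabar^2$ is a cycle in the Hom-complex — it is only a cycle modulo $U$, which suffices since the target congruences are all modulo $U$ anyway.
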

\begin{proof}
There is a canonical map $\cC \otimes \cC^\vee\to \F[U]$ given by the contraction map $c(x \otimes y) = y(x)$. Inspection shows that in sufficiently negative degrees, the induced map $H_*(\cC \otimes \cC^\vee) \to \F[U]$ is an isomorphism. We check that $c$ is an almost $\iota$-map. For this, we need to produce a chain homotopy $H \co \cC\otimes \cC^\vee \to \F[U]$ such that
\[
\iotabar c(x \otimes y) + c(\iotabar x \otimes \iotabar^\vee y) \equiv (\d H + H\d)(x \otimes y) \mod U.
\]
Now, the action of $\iotabar$ on $\cC(0)$ is the identity, and $\d$ on $\cC(0)$ is zero. Hence the above congruence reduces to
\[
y(x)+y(\iotabar^2 x) \equiv H\partial(x\otimes y) \mod U.
\]
Let $\iotabar^2+ \id \equiv \d s+s \d$ $\bmod$ $U$ on $\cC$. Substituting this in, we obtain
\[
y(\d s x)+y(s \d x) \equiv H \d (x\otimes y) \mod U .
\]
By construction of the contraction map, $y(\d sx)=(\d y)(s x)$. It is then straightforward to check that setting $H(a \otimes b) = b(sa)$ provides the desired homotopy. Hence $c$ constitutes a local map from $\cC \otimes \cC^\vee$ to $\F[U]$. Applying duality yields a local map
\[
\F[U]^\vee \to \cC\otimes \cC^\vee.
\]
Hence $\cC^\vee$ is an inverse of $\cC$. (One may also perform this calculation in bases, of course. See \cite[Proposition 8.8]{HMZ:involutive-sum}.)
\end{proof}

Noting that the operation of tensor product is clearly associative (as well as commutative), we now have everything we need to define the group $\smash{\mfIhat}$:

\begin{definition}
Let $\mfIhat$ be the set of almost $\inv$-complexes up to local equivalence. This has a group operation given by tensor product, where the identity element is given by $\cC(0)$ and inverses are given by dualizing. We call $\smash{\mfIhat}$ the \textit{almost local equivalence group}. 
\end{definition}

Clearly, every $\inv$-complex is automatically an almost $\inv$-complex, and a local map between two $\inv$-complexes is automatically an almost $\inv$-map. Hence we have a well-defined forgetful map from $\mfI$ to $\smash{\mfIhat}$:

\begin{lemma}\label{lem:map-of-local-groups}
The forgetful map $\mfI \to \mfIhat$ is a group homomorphism.
\end{lemma}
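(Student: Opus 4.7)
The plan is to verify the three components of being a group homomorphism, all of which reduce to observing that the defining conditions for an $\iota$-complex are strictly stronger than the mod-$U$ conditions defining an almost $\iota$-complex.

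First, I would check that the forgetful assignment is well-defined on representatives. Given an $\iota$-complex $(C,\iota)$, the axioms $\iota\circ\partial = \partial\circ\iota$ and $\iota^2 \simeq \id$ from Definition \ref{def:iotacomplex} immediately imply $\iota\circ\partial + \partial\circ\iota \in \im U$ (since $0 \in \im U$) and $\iota^2 \simeq \id \bmod U$ (since an honest chain homotopy is \emph{a fortiori} a homotopy mod $U$). Hence $(C,\iota)$ is an almost $\iota$-complex in the sense of Definition \ref{def:aic}. Likewise, a local map $f$ between $\iota$-complexes is a grading-preserving, $U$-equivariant chain map with $f\circ\iota \simeq \iota'\circ f$ that induces an isomorphism on $U^{-1}H_*$; the homotopy commutation clearly descends to $f\circ\iota \simeq \iota'\circ f \bmod U$, so $f$ is an almost $\iota$-morphism, and the local property on $U^{-1}H_*$ is unchanged.

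Next, I would observe that the forgetful map respects the group operations on the nose. The tensor product in $\mfI$ is defined by $(C_1\otimes_{\F[U]}C_2, \iota_1\otimes\iota_2)$ (see \cite[Section 8]{HMZ:involutive-sum}), and the tensor product in $\mfIhat$ is defined by exactly the same formula in Definition \ref{defn:product}. Similarly, the identity element $\cC(0)$ has the same underlying data in both groups, namely a single $\F[U]$-tower starting in grading zero together with the identity map. Therefore the forgetful map sends identity to identity and products to products.

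Combining these observations shows the forgetful map is a well-defined group homomorphism. There is essentially no obstacle here: the entire content is the tautological implication that an equality is a congruence mod $U$, and that a strict homotopy is a homotopy mod $U$. The only small subtlety worth flagging is that inverses are automatically preserved because they are constructed identically (by dualization, via Definition \ref{def:aicdual} and its analogue in $\mfI$), so no separate verification is needed.
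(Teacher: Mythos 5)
Your proposal is correct and follows essentially the same route as the paper: the paper notes just before the lemma that every $\iota$-complex is automatically an almost $\iota$-complex and every local map of $\iota$-complexes is automatically an almost $\iota$-morphism, and its proof of the lemma is the single observation that the tensor product constructions coincide, which is exactly your second step. Your additional remarks on the identity and on inverses are harmless elaborations of the same tautological verification.
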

\begin{proof}
The almost $\iota$-complex associated to a tensor product of $\iota$-complexes is the tensor product of the underlying almost $\iota$-complexes.
\end{proof}

Composing the forgetful homomorphism with the map $h$ from Section~\ref{sec:2}, we finally obtain our desired homomorphism:
\[
\widehat{h}: \Theta^3_{\Z} \rightarrow \mfI \rightarrow \mfIhat.
\]


\subsection{Preliminary examples} We now give some simple examples to help illustrate the difference between $\smash{\mfIhat}$ and $\mfI$.

\begin{example}
As remarked previously, every $\inv$-complex is automatically an almost $\inv$-complex. Hence, the complexes $X_i$ and $-X_i$ introduced in Example~\ref{ex:xicomplex} are certainly almost $\inv$-complexes. In Figure~\ref{fig:almostiotacomplexes}, we have depicted two similar almost $\inv$-complexes which are \textit{not} $\inv$-complexes. The first of these, which we denote by $\cC(-, -3)$ \footnote{The reason for this notation will become clear in Section~\ref{sec:4}.}, has three generators $T_0, T_1$, and $T_2$, with
\[
\d T_0 = \d T_2 = 0, \ \ \ \d T_1 = U^3 T_2
\]
and
\[
\omega T_0 = T_1, \ \ \ \omega T_1 = \omega T_2 = 0.
\]
Note that $\omega$ does \textit{not} commute with $\d$, since $\omega(\d T_0) = 0$ but $\d \omega(T_0) = U^3 T_2$. (However, $\omega$ trivially commutes with $\d$ modulo $U$.) The second complex, which we denote by $\cC(+, 3)$, also has three generators. These satisfy
\[
\d T_0 = \d T_1 = 0, \ \ \ \d T_2 = U^3 T_2
\]
and
\[
\omega T_0 = \omega T_2 = 0, \ \ \ \omega T_1 = T_0.
\]
The reader should compare with Figure~\ref{fig:iotacomplexes}.
\end{example}

\begin{figure}[h!]
    \centering
    \begin{subfigure}[b]{0.23\textwidth}
    \centering
        \begin{tikzpicture}[scale=.75, yscale=.5]
	\begin{scope}[thin, black]
		\draw [-] (-1, -2) -- (-1, -7); 
		\draw [-] (0, -2) -- (0, -7); 
		\draw [-] (1, 3) -- (1, -8); 
		\node[anchor=south east] (x) at (-1, -2) {\footnotesize $T_0$};
		\node[anchor=south west] (y) at (0, -2) {\footnotesize $T_1$};
		\node[anchor=south west] (z) at (1, 3) {\footnotesize $T_2$};
		\foreach \x in {-2,-4,-6}
    			\draw (-1,\x ) -- (-1,\x ) node (\x x) {$\bullet$};
		\foreach \y in {-2,-4,-6}
    			\draw (0,\y ) -- (0,\y ) node (\y y) {$\bullet$};
		 \foreach \z in {3, 1, -1, -3,-5,-7}
    			\draw (1,\z ) -- (1,\z ) node (\z z) {$\bullet$};
		\draw [thick, ->] (-2y) -- (-3z) ;
		\draw [thick, densely dotted, red, ->] (-2x) -- (-2y);
	\end{scope}	
\end{tikzpicture}
        \caption{$\cC(-, -3)$}
    \end{subfigure}
    \begin{subfigure}[b]{0.23\textwidth}
    \centering
        \begin{tikzpicture}[scale=.75, yscale=.5]
	\begin{scope}[thin, black]
		\draw [-] (-1, 2) -- (-1, -7); 
		\draw [-] (0, 2) -- (0, -7); 
		\draw [-] (1, -3) -- (1, -8); 
		\node[anchor=south east] (x) at (-1, 2) {\footnotesize $T_0$};
		\node[anchor=south west] (z) at (0, 2) {\footnotesize $T_1$};
		\node[anchor=south west] (y) at (1, -3) {\footnotesize $T_2$};
		\foreach \x in {2, 0, -2,-4,-6}
    			\draw (-1,\x ) -- (-1,\x ) node (\x x) {$\bullet$};
		\foreach \z in {2, 0, -2,-4,-6}
    			\draw (0,\z ) -- (0,\z ) node (\z z) {$\bullet$};
		 \foreach \y in {-3,-5,-7}
    			\draw (1,\y ) -- (1,\y ) node (\y y) {$\bullet$};
		\draw [thick, ->] (-3z) -- (-4y);
		\draw [thick, densely dotted, red, ->] (2z) -- (2x);
	\end{scope}	
	\end{tikzpicture}
        \caption{$\cC(+, 3)$}
       
    \end{subfigure}
    \caption{Some simple almost $\inv$-complexes. Vertical lines represent multiplication by $U$. Red dotted horizontal arrows represent the action of $\omega = 1+\iota$ and  black diagonal arrows represent the differential $\partial$; both are $U$-equivariant.
}\label{fig:almostiotacomplexes}
\end{figure}
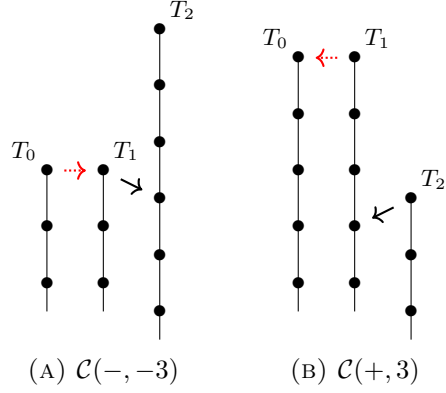

It turns out that even up to local equivalence, there exist almost $\inv$-complexes which do not come from $\inv$-complexes, showing that the forgetful homomorphism $\mfI \rightarrow \mfIhat$ is not surjective. The problem of determining which almost $\inv$-complexes are realized by genuine $\inv$-complexes appears to be quite difficult; see Section~\ref{sec:8} for comments.

Conversely, the relation of local equivalence in $\mfIhat$ is coarser than the relation of local equivalence in $\mfI$. Hence the forgetful homomorphism is not injective. Psychologically, the most important example of this is the case of Example~\ref{ex:selfdual}:

\begin{example}\label{ex:selfdualzero}
Consider the self-dual $\inv$-complex $X$ from Example~\ref{ex:selfdual}. We claim that considered as an almost $\inv$-complex, this is locally equivalent to $\cC(0)$. Indeed, we have a local map $f \co \cC(0) \rightarrow X$ which takes the generator of $\cC(0)$ to $T_0$. We define a local map $g \co X \rightarrow \cC(0)$ by sending $T_0$ to the generator of $\cC(0)$, and all of the other generators to zero. Neither of these maps commute with $\omega$, even up to homotopy. However, they trivially commute with $\omega$ modulo $U$, so $f$ and $g$ are local maps in the setting of almost $\inv$-complexes.
\end{example}

We will further discuss the relation between $\mfI$ and $\mfIhat$ in Section~\ref{sec:8}.


\subsection{Total order}
We now turn to the most important characteristic of the almost local equivalence group: the presence of a total order. 

\begin{definition}\label{def:partialorder}
We define partial orders on $\mfI$ and $\mfIhat$ as follows:
\begin{itemize}
\item Let $\cC_1$ and $\cC_2$ be two $\inv$-complexes. We say that $\cC_1 \leq \cC_2$ if there exists a local map (in the sense of $\inv$-complexes) $f \co \cC_1 \rightarrow \cC_2$.
\item Let $\cC_1$ and $\cC_2$ be two almost $\inv$-complexes. We say that $\cC_1 \leq \cC_2$ if there exists a local map (in the sense of almost $\inv$-complexes) $f \co \cC_1 \rightarrow \cC_2$.
\end{itemize}
\end{definition}

In light of Lemma~\ref{lem:tensornatural} (as well as the analogous statement for $\inv$-complexes), it is clear that the above partial order respects the group structure in both cases. Moreover, the forgetful homomorphism $\mfI \rightarrow \mfIhat$ is evidently a homomorphism of partially ordered groups. Although Example~\ref{ex:selfdual} prevents the partial order on $\mfI$ from being a total order, the fact that the forgetful homomorphism sends $X$ to $\cC(0)$ means that we can still try to establish a total order on $\smash{\mfIhat}$.

It will be convenient for us to first establish some technical results aimed at simplifying Definitions~\ref{def:aic} and \ref{def:aim}. We begin by showing that if we have two homotopy equivalent chain complexes over $\ff[U]$, then an almost $\inv$-action on one can be transferred to an almost $\inv$-action on the other:

\begin{lemma}\label{lem:iotatransfer}
Let $\cC_1 = (C_1, \iotabar_1)$ be an almost $\inv$-complex. Let $C_2$ be another chain complex over $\ff[U]$, and suppose we have homotopy inverses
\[
f\co C_1 \rightarrow C_2, \ \ \ g \co C_2 \rightarrow C_1
\]
in the usual sense of chain complexes over $\ff[U]$. Then we may make $C_2$ into an almost $\inv$-complex by defining
\[
\iotabar_2 = f \circ \iotabar_1 \circ g.
\]
Moreover, $\cC_2 = (C_2, \iotabar_2)$ is locally equivalent to $\cC_1$.
\end{lemma}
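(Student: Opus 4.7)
The plan is to verify directly the two defining conditions for $(C_2,\iotabar_2)$ to be an almost $\iota$-complex, and then exhibit $f$ and $g$ themselves as the local maps establishing the local equivalence. The underlying hypothesis $U^{-1}H_*(C_2)\cong \F[U,U^{-1}]$ is automatic, since $f$ and $g$ are $U$-equivariant homotopy equivalences of $\F[U]$-complexes.

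For the first almost $\iota$-complex condition, I would simply compute
\[
\iotabar_2 \partial + \partial \iotabar_2 = f\iotabar_1 g \partial + \partial f \iotabar_1 g = f(\iotabar_1 \partial + \partial \iotabar_1)g,
\]
using that $f$ and $g$ are honest chain maps. Since $\iotabar_1 \partial + \partial \iotabar_1 \in \im U$ and both $f$ and $g$ are $U$-equivariant, the composite lies in $\im U$, as required. For the approximate involutivity, I would pick a standard chain homotopy $H$ with $gf + \id_{C_1} = \partial H + H\partial$ and first show that $\iotabar_1 gf\iotabar_1 \simeq \iotabar_1^2 \mod U$: the difference equals $\iotabar_1 \partial H \iotabar_1 + \iotabar_1 H \partial \iotabar_1$, and using $\iotabar_1 \partial \equiv \partial \iotabar_1 \mod U$ one sees that $\iotabar_1 H \iotabar_1$ is a mod-$U$ null-homotopy. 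Then $\iotabar_2^2 = f(\iotabar_1 gf\iotabar_1)g$, so a formal lemma that mod-$U$ homotopies are preserved under pre- and post-composition by $U$-equivariant chain maps gives $\iotabar_2^2 \simeq f\iotabar_1^2 g \simeq fg \simeq \id_{C_2} \mod U$, where the final step uses that $fg\simeq \id_{C_2}$ in the ordinary sense.

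For the local equivalence, I propose that $f$ and $g$ themselves be the local maps. Both are $U$-equivariant chain maps and induce isomorphisms on $U^{-1}H_*$, so the only point is to check the mod-$U$ commutation with the almost involutions, i.e., $f\iotabar_1 \simeq \iotabar_2 f \mod U$ and $g\iotabar_2 \simeq \iotabar_1 g \mod U$. For the first, rewrite $\iotabar_2 f = f\iotabar_1 gf$ and apply the same trick: the difference $f\iotabar_1 gf + f\iotabar_1 = f\iotabar_1(\partial H + H\partial)$ is seen to be mod-$U$ null-homotopic by pushing $\partial$ past $\iotabar_1$ (picking up an element of $\im U$) and recognizing $f\iotabar_1 H$ as the desired chain homotopy. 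The analogous argument with the roles of $f$ and $g$ swapped handles $g$.

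The main obstacle, such as it is, will be formalizing the "mod-$U$ bookkeeping": one wants the clean statement that if $\phi \simeq \psi \mod U$ and $a,b$ are $U$-equivariant chain maps, then $a\phi b \simeq a\psi b \mod U$. This is routine and in fact implicit in Lemma~\ref{lem:tensormapnatural}, but it is worth isolating it (or simply applying it silently) so that the two verifications above reduce to the single combinatorial identity $gf + \id = \partial H + H\partial$. Once this is in place, the proof is essentially a sequence of substitutions with no real surprises.
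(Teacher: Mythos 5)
Your proposal is correct and follows essentially the same route as the paper: verify $\iotabar_2\partial+\partial\iotabar_2\in\im U$ directly, use the homotopy $gf+\id=\partial H+H\partial$ together with the fact that $\partial$ commutes with $\iotabar_1$ mod $U$ to produce the mod-$U$ homotopies for $\iotabar_2^2\simeq\id$ and $f\iotabar_1\simeq\iotabar_2 f$, and take $f$ and $g$ themselves as the local maps. Isolating the observation that mod-$U$ homotopies are preserved under composition with honest chain maps is just a mild repackaging of the same computation.
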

\begin{proof}
Showing that $\partial \circ \iotabar_2 + \iotabar_2 \circ \partial \in \im U$ is straightforward. Thus, let $gf + \id = \partial H + H \partial$. A quick computation yields
\begin{align*}
\iotabar_2^2 &= (f  \iotabar_1 g) (f \iotabar_1 g) \\
&= f \iotabar_1 (\id + \partial H + H\partial) \iotabar _1 g \\
&\equiv f \iotabar_1^2 g + \partial (f\iotabar_1 H \iotabar_1g) + (f\iotabar_1 H \iotabar_1g)\partial \mod U,
\end{align*}
where in the last line we have used the fact that $\partial$ commutes with $\iotabar_1$ mod $U$. Since $\iotabar_1^2 \simeq \id \bmod U$ and $fg \simeq \id$, it easily follows that $\iotabar_2^2 \simeq \id \bmod U$. Hence $\cC_2$ is an almost $\inv$-complex.

We now show that $f$ and $g$ are local maps between $\cC_1$ and $\cC_2$. Let $H$ be as above. Then
\begin{align*}
f\iotabar_1 + \iotabar_2 f &= f\iotabar_1 + f\iotabar_1 gf \\
&= f\iotabar_1 (\partial H + H\partial) \\
&\equiv \partial(f \iotabar_1H) + (f \iotabar_1 H)\partial \mod U,
\end{align*}
where in the last line we have used the fact that $\partial$ commutes with $\iotabar_1$ mod $U$. This shows that that $f\iotabar_1 \simeq \iotabar_2 f \bmod{U}$, so $f$ is an almost $\inv$-morphism. Since $f$ is a quasi-isomorphism, it is clearly local. The proof for $g$ is similar. 
\end{proof}

This yields the following convenient structure result:

\begin{lemma}\label{lem:reduced}
Let $\cC$ be any almost $\iota$-complex. Then $\cC$ is locally equivalent to an almost $\iota$-complex $\cC'$ with $\partial\equiv 0 \bmod{U}$.
\end{lemma}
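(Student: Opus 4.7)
The plan is to first replace the underlying $\F[U]$-complex $C$ with a homotopy equivalent one in a standard normal form whose differential is automatically divisible by $U$, and then transfer the almost $\iota$-action across this equivalence using Lemma~\ref{lem:iotatransfer}.

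More precisely, I would invoke the structure theorem for finitely generated, $\Z$-graded free chain complexes over the PID $\F[U]$: any such complex $C$ is homotopy equivalent, through grading-preserving $\F[U]$-chain maps, to a direct sum of elementary pieces of two types, namely
\begin{itemize}
\item[(a)] a single free generator in some grading with zero differential, and
\item[(b)] a ``staircase'' piece $\F[U]\langle a\rangle \oplus \F[U]\langle b\rangle$ with $\partial a = U^{k}b$ for some integer $k\geq 1$.
\end{itemize}
Call the resulting reduced complex $C'$. By construction the differential on $C'$ sends every basis element either to zero or into $U\cdot C'$, so $\partial'\equiv 0 \bmod U$. In practice, one establishes this by Gaussian elimination on $C/UC$: whenever the mod-$U$ differential carries one basis element to another, cancel that pair at the level of $C$ by a standard $\F[U]$-equivariant change of basis. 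Because $\F[U]$ is local with maximal ideal $(U)$, iterating this cancellation terminates (the total $\F$-dimension of $C/UC$ strictly decreases at each step) and produces the desired normal form.

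Let $f\co C\to C'$ and $g\co C'\to C$ be the resulting grading-preserving $\F[U]$-equivariant homotopy inverses. By Lemma~\ref{lem:iotatransfer}, setting $\iotabar' = f\circ \iotabar\circ g$ makes $\cC' = (C',\iotabar')$ into an almost $\iota$-complex locally equivalent to $\cC$, and the condition $U^{-1}H_*(C')\cong \F[U,U^{-1}]$ is inherited from $C$ since homotopy equivalent $\F[U]$-complexes have the same $U$-localized homology. The grading normalization of Remark~\ref{rem:gradingconventiontwo} is likewise preserved.

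The only substantive point is the existence of the reduced model $C'$, i.e., the cancellation step; once that is in hand, the almost $\iota$-structure is carried along formally by Lemma~\ref{lem:iotatransfer}. Note that one should not expect $\iotabar'$ itself to be simpler than $\iotabar$ — the gain is entirely on the side of the differential, which is what we need for the applications in subsequent sections.
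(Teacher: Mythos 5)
Your proof is correct and takes essentially the same approach as the paper's: invoke the structure theorem for finitely generated free chain complexes over the PID $\F[U]$ to produce a homotopy-equivalent model with $\partial \equiv 0 \bmod U$, then transfer the almost $\iota$-structure via Lemma~\ref{lem:iotatransfer}. The only cosmetic difference is that the paper simply cites the PID normal form (a basis $\{x,y_i,z_i\}$ with $\partial y_i = U^{\eta_i}z_i$, discarding the pairs with $\eta_i = 0$) and leaves it at "a standard fact," whereas you additionally sketch the underlying Gaussian-elimination argument and observe why it terminates — a reasonable bit of extra detail, but not a genuinely different route.
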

\begin{proof}
It is a standard fact that if $C$ is a free, finitely-generated chain complex over $\ff[U]$, then $C$ is homotopy equivalent to a complex with $\partial\equiv 0 \bmod{U}$. To see this, note that since $\F[U]$ is a PID, we may fix a basis for $C$ of the form $\{ x, y_i,z_i \}_{i=1}^n$, where
\begin{align*}
&\partial x = 0, \\
&\partial y_i = \smash{U^{\eta_i}} z_i, \text{ and} \\
&\partial z_i = 0,
\end{align*}
for some set of integers $\eta_i \geq 0$. Without loss of generality, assume that $\eta_i \geq 1$ precisely when $1 \leq i \leq m$. Then $C$ is homotopy equivalent to the subcomplex $C'$ generated by $\{x, y_i, z_i\}_{i=1}^m$, which has $\partial\equiv 0 \bmod{U}$. Applying Lemma~\ref{lem:iotatransfer} completes the proof.
\end{proof}

\begin{definition}\label{def:reduced}
Lemma \ref{lem:reduced} shows there is no loss of generality in considering only almost $\iota$-complexes with $\partial \equiv 0 \bmod{U}$. We call these \emph{reduced} almost $\iota$-complexes. We will assume henceforth that all of our almost $\iota$-complexes are reduced.
\end{definition}

\begin{lemma}\label{lem:reducednice}
If $\cC$ is a reduced almost $\inv$-complex, then $\omega^2 \equiv 0 \bmod{U}$. Moreover, if $\cC'$ is another reduced almost $\inv$-complex and $f \co \cC \rightarrow \cC'$ is a chain map between them, then $f$ is an almost $\inv$-morphism if and only if $f \omega + \omega f \equiv 0 \bmod{U}$.
\end{lemma}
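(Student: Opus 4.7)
The proof is a direct algebraic manipulation that exploits two features: the fact that we work in characteristic two (so $\omega = 1 + \iotabar$ satisfies $\omega^2 = 1 + \iotabar^2$), and the fact that in a reduced almost $\iota$-complex we may simply ignore any term of the form $\partial H + H \partial$ when working modulo $U$.

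For the first claim, the plan is to expand
\[
\omega^2 \;=\; (1+\iotabar)^2 \;=\; 1 + \iotabar^2,
\]
and then appeal to the defining condition $\iotabar^2 \simeq \id \bmod U$ from Definition~\ref{def:aic}. This provides a homotopy $H \co C \to C$ with $1 + \iotabar^2 + \partial H + H \partial \in \im U$, so that
\[
\omega^2 \;\equiv\; \partial H + H \partial \bmod U.
\]
Since $\cC$ is reduced, $\partial \equiv 0 \bmod U$ (Definition~\ref{def:reduced}), so both $\partial H$ and $H \partial$ are individually zero mod $U$. This gives $\omega^2 \equiv 0 \bmod U$.

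For the second claim, the plan is to observe that since we are in characteristic two,
\[
f \omega + \omega f \;=\; f(1 + \iotabar) + (1 + \iotabar)f \;=\; f\iotabar + \iotabar f.
\]
So the equation $f \omega + \omega f \equiv 0 \bmod U$ is literally the same as $f\iotabar + \iotabar f \equiv 0 \bmod U$. By Definition~\ref{def:aim}, $f$ is an almost $\iota$-morphism precisely when there exists $H \co C \to C'$ (raising degree by one) such that
\[
f \iotabar + \iotabar f + \partial H + H \partial \;\in\; \im U.
\]
If $f\omega + \omega f \equiv 0 \bmod U$, then one may simply take $H = 0$, so $f$ is an almost $\iota$-morphism. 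Conversely, if $f$ is an almost $\iota$-morphism with homotopy $H$, then since $\cC$ and $\cC'$ are both reduced, $\partial \equiv 0 \bmod U$ on each, and hence $\partial H + H\partial \equiv 0 \bmod U$ automatically. Substituting, $f\iotabar + \iotabar f \equiv 0 \bmod U$, which rewrites as $f \omega + \omega f \equiv 0 \bmod U$.

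There is no real obstacle here; both statements reduce immediately once one combines the characteristic-two identity $\omega^2 = 1 + \iotabar^2$ with the observation that $\partial \equiv 0 \bmod U$ makes the homotopy $\partial H + H\partial$ contribute nothing modulo $U$. The point of the lemma is mostly to record that reduced almost $\iota$-complexes can henceforth be handled through the simpler identities $\omega^2 \equiv 0$ and $f\omega + \omega f \equiv 0$ (mod $U$), without needing to keep track of auxiliary homotopies.
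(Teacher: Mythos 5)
Your proof is correct and is essentially the paper's argument: the paper's one-line proof is precisely the observation that $\im\partial\subseteq\im U$ on a reduced complex makes the homotopy terms $\partial H + H\partial$ vanish mod $U$ (using that $H$ is $\F[U]$-equivariant for the $H\partial$ term), which is exactly what you spell out. No issues.
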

\begin{proof}
The lemma follows immediately from the fact that $\im \partial$ is contained in $\im U$ for any reduced almost $\inv$-complex.
\end{proof}

Thus, in practice we will not concern ourselves with the homotopies in Definitions~\ref{def:aic} and \ref{def:aim}, and instead prove that maps are almost $\inv$-morphisms by checking that they commute with $\omega$ modulo $U$.

We now turn to the central result of this section:

\begin{lemma}\label{lem:totalorder}
Let $\cC$ be an almost $\inv$-complex. Suppose that there is no local map from $\cC(0)$ to $\cC$. Then there exists a local map from $\cC$ to $\cC(0)$.
\end{lemma}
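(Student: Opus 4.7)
The plan is to argue by contrapositive: assuming there is no local map $\cC \to \cC(0)$, I construct a local map $\cC(0) \to \cC$. By Definition \ref{def:reduced} I may take $\cC$ reduced, and fix a homogeneous basis $\{x, y_i, z_i\}$ with $\partial x = 0$, $\partial y_i = U^{\eta_i} z_i$ (with $\eta_i \geq 1$), $\partial z_i = 0$, and $x$ the tower generator in grading~$0$.

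A local map $g \co \cC \to \cC(0) = \F[U]$ is an $\F[U]$-linear grading-preserving chain map with $g(x) = 1$ and $g\omega \equiv 0 \bmod U$. Such a $g$ is determined by coefficients $c_v \in \F$ with $g(v) = c_v U^{-\deg v/2}$ when $\deg v \leq 0$ is even (and $g(v) = 0$ otherwise). The chain condition forces $c_{z_i} = 0$, the isomorphism condition on $U^{-1}H_*$ forces $c_x = 1$, and a grading check shows that $g\omega \equiv 0 \bmod U$ collapses to the single requirement $g(\omega v) = 0$ for all $v \in V := \cC^0$. Decomposing $V = V^x \oplus Y_0 \oplus W$ with $V^x = \F x$, $Y_0 = \mathrm{span}\{y_i : \deg y_i = 0\}$, and $W = U\cC^2 + \mathrm{span}\{z_i : \deg z_i = 0\}$, the remaining freedom in $g$ is a functional $g' \co Y_0 \to \F$, and the existence of $g$ reduces to solvability of
\[
g'(\pi_{Y_0} \omega v) \;=\; \pi_x \omega v \quad \text{for all } v \in V.
\]

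Everything descends modulo $U\cC^2$ to $\overline V := (\cC/U\cC)^0 = \F \bar x \oplus \overline{Y_0} \oplus \overline{Z_0}$, and the crucial observation is that $\iotabar^2 \simeq \id \bmod U$ forces $\overline \omega^2 = 0$ on $\cC/U\cC$ (cf.\ Lemma \ref{lem:reducednice}). If the system above has no solution, there exists $\bar v \in \overline V$ with $\pi_{\overline{Y_0}} \overline\omega \bar v = 0$ and $\pi_{\bar x} \overline \omega \bar v = 1$; equivalently, $\overline \omega \bar v = \bar x + \bar w$ for some $\bar w \in \overline{Z_0}$. The key step is to apply $\overline\omega$ to this equation: invoking $\overline\omega^2 = 0$ gives $\overline\omega \bar x = \overline\omega \bar w$, so $\overline\omega(\bar x + \bar w) = 0$.

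Lift $\bar w$ to $w \in \mathrm{span}\{z_i : \deg z_i = 0\} \subset V$ and set $v := x + w$. Then $v$ is a genuine grading-$0$ cycle (the $z_i$ involved are themselves cycles), it represents the tower generator in $U^{-1}H_0(\cC)$ (each $[z_i]$ is $U$-torsion because $U^{\eta_i} z_i = \partial y_i$), and $\omega v \in U\cC^2 \subset U\cC$. Sending $1 \mapsto v$ therefore defines the desired local map $\cC(0) \to \cC$, which is what was needed. The main hurdle is the bookkeeping translating the mod-$U$ chain-level conditions into the concrete solvability criterion on $\overline V$; once this is set up, the single identity $\overline\omega^2 = 0$ carries the essential content of the argument.
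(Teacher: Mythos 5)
Your proof is correct, and it is essentially the paper's argument run in the contrapositive direction: both proofs reduce to a reduced complex, hinge on $\omega^2 \equiv 0 \bmod U$, and turn on the same dichotomy --- either some degree-zero $v$ satisfies $\omega v \equiv x + w \bmod U$ with $w$ a sum of degree-zero $z_i$'s, in which case applying $\omega$ again gives the cycle $x+w$ with $\omega(x+w) \in \im U$ and hence a local map $\cC(0) \to \cC$, or no such $v$ exists and one gets a local map $\cC \to \cC(0)$. Your solvability criterion for the functional $g'$ is just a coordinate version of the paper's quotient by the $\omega$-invariant subcomplex containing $Z' + W'$ (with the added small convenience that working in $\cC/U\cC$ lets you skip the paper's preliminary step of stripping the $U$-powers from $\omega$), so the two proofs differ only in organization, not substance.
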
 
\begin{proof}
Fix a basis $\mathcal{B} = \{x, y_i, z_i\}$ for $\cC$ as in the proof of Lemma~\ref{lem:reduced}. For each $g \in \mathcal{B}$, write $\omega g$ as a linear combination of elements in $\mathcal{B}$ and their $U$-powers. Since any two actions of $\omega$ which are congruent modulo $U$ are locally equivalent, without loss of generality we may assume that all of these $U$-powers are zero. We have displayed this situation schematically in Figure~\ref{fig:centrallemma}; here, the $\omega$-arrows send elements of $\mathcal{B}$ to (linear combinations of) elements of $\mathcal{B}$, without any $U$-powers.

Now consider the $\ff$-vector spaces
\begin{align*}
&Z = \text{span}_{\ff} \{ z_i \}, \text{ and} \\
&W = \text{span}_{\ff} \{\omega g \mid g \in \mathcal{B}\}.
\end{align*}
Note that $Z$ and $W$ are both subspaces of the $\ff$-span of $\mathcal{B}$; i.e., they consist of sums of generators that are not decorated by any powers of $U$. We may of course consider the $\ff[U]$-spans of $Z$ and $W$, which we denote by:
\begin{align*}
&Z' = \text{span}_{\ff[U]} \{ z_i \}, \text{ and} \\
&W' = \im \omega = \text{span}_{\ff[U]} \{\omega g \mid g \in \mathcal{B}\}.
\end{align*}
Note that $Z' + W'$ is a subcomplex of $\cC$ which is preserved by $\omega$. Indeed, $\im \partial$ is a subset of $Z'$, so $\partial$ maps into $Z'$. Similarly, $\omega$ maps into $W'$, by definition. 

We now claim that $x$ is not in $Z + W$. Indeed, suppose it were. Then $x = z + w$ for some $z \in Z$ and $w \in W$. Since $z$ is a $U$-torsion cycle, we then have that $x + z$ is a $U$-nontorsion cycle in the image of $\omega$. Because $\omega^2 \equiv 0 \bmod{U}$, this implies that $\omega(x+z) \equiv 0 \bmod{U}$. But this means we can construct a local map from $\cC(0)$ to $\cC$ by mapping the generator of $\cC(0)$ to $x+z$, a contradiction. Hence $x$ is not in $Z + W$.

Now choose a homogenous $\ff$-basis $\{p_1, \ldots, p_r\}$ for $Z + W$. Extend the linearly independent set $\{x, p_1, \ldots, p_r\}$ to a homogenous $\ff$-basis
\[
\{x, p_1, \ldots, p_r, q_1, \ldots, q_s\}
\]
for all of $\text{span}_{\ff}(\mathcal{B})$. Define
\[
\cC' = \text{span}_{\ff[U]}\{p_1, \ldots, p_r, q_1, \ldots, q_s\}.
\]
This is a subcomplex of $\cC$ which is preserved by $\omega$, following the same argument as for $Z' + W'$. It is then easily checked that the quotient map
\[
\cC \rightarrow \cC/\cC' \cong \ff[U]
\]
is a local map from $\cC$ to $\cC(0)$, completing the proof.
\end{proof}

\begin{figure}[h!]
\center
\includegraphics[scale=0.9]{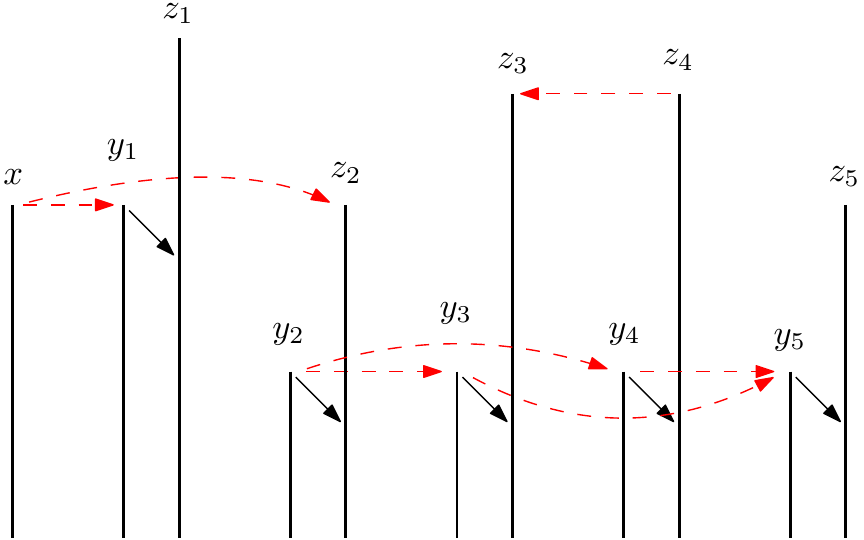}
\caption{A schematic example of an almost $\inv$-complex with a simplified $\omega$-action.}
\label{fig:centrallemma}
\end{figure}

It follows immediately that:

\begin{theorem}\label{thm:totalorder}
The almost local equivalence group $\mfIhat$ is totally ordered.
\end{theorem}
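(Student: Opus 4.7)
The plan is to deduce Theorem~\ref{thm:totalorder} from Lemma~\ref{lem:totalorder} by a standard translation trick: a partial order on a group that is compatible with the group operation is a total order if and only if every element is comparable to the identity. So the first step is to record (or briefly verify) that the relation $\leq$ of Definition~\ref{def:partialorder} descends to a well-defined partial order on $\mfIhat$: reflexivity is witnessed by the identity map, transitivity by composition of almost $\inv$-morphisms (both clearly local whenever their factors are), and antisymmetry by the fact that mutual local maps means mutual local equivalence, hence equality in $\mfIhat$. Compatibility with tensor product follows from Lemma~\ref{lem:tensornatural}: if $f \co \cC_1 \to \cC_2$ is a local map, then $f \otimes \id_{\cC_3}$ is a local map $\cC_1 \otimes \cC_3 \to \cC_2 \otimes \cC_3$ for any almost $\inv$-complex $\cC_3$.

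With this compatibility in hand, the heart of the argument is the following reduction. Given any two classes $[\cC_1], [\cC_2] \in \mfIhat$, consider the almost $\inv$-complex $\cC_1 \otimes \cC_2^\vee$. By Lemma~\ref{lem:totalorder}, exactly one of the following must hold (at least one; uniqueness comes from antisymmetry together with self-duality of $\cC(0)$):
\begin{itemize}
\item there is a local map $\cC(0) \to \cC_1 \otimes \cC_2^\vee$, or
\item there is a local map $\cC_1 \otimes \cC_2^\vee \to \cC(0)$.
\end{itemize}
In either case, I tensor the map with $\cC_2$, use compatibility of $\leq$ with the group operation, and then invoke Lemma~\ref{lem:aicdual} which says $\cC_2^\vee \otimes \cC_2 \simeq \cC(0)$ in $\mfIhat$. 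In the first case this gives $[\cC_2] \leq [\cC_1]$; in the second case it gives $[\cC_1] \leq [\cC_2]$. Hence any two classes are comparable.

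The only real content is Lemma~\ref{lem:totalorder}, which has already been established; everything else is formal. The main pitfalls to be careful about are therefore purely bookkeeping: verifying that the statements "there is a local map" and "there is no local map" are well-defined on equivalence classes (which is immediate, since pre- and post-composing with local equivalences preserves locality), and using Lemma~\ref{lem:aicdual} in the form $\cC \otimes \cC^\vee \simeq \cC(0)$ so that tensoring with $\cC_2$ genuinely converts statements about $\cC_1 \otimes \cC_2^\vee$ into statements comparing $\cC_1$ and $\cC_2$. Since $\mfIhat$ is a group with a partial order compatible with the group operation in which every element is comparable to the identity, it is totally ordered, completing the proof.
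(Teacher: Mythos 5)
Your argument is correct and is essentially the paper's own proof: the paper likewise reduces totality to comparability with $\cC(0)$ via Lemma~\ref{lem:totalorder}, with the translation step (tensoring with $\cC_2$ and invoking Lemma~\ref{lem:aicdual} and the compatibility of $\leq$ with the group operation) left implicit, which you simply spell out. One small quibble: your parenthetical ``exactly one'' is not quite right --- both local maps exist when $\cC_1\otimes\cC_2^\vee$ is locally equivalent to $\cC(0)$ --- but only ``at least one'' is needed, so this does not affect the proof.
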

\begin{proof}
It suffices to show that every element is either greater than or equal to $\cC(0)$ or less than or equal to $\cC(0)$. This is the content of Lemma~\ref{lem:totalorder}.
\end{proof}


\section{Standard Complexes and Their Properties}
\label{sec:4}
We now introduce an important family of almost $\inv$-complexes, which we call the \textit{standard complexes}. Our main goal for the present section will be to understand the total order on this family afforded by Theorem~\ref{thm:totalorder}. In Section~\ref{sec:6}, we will use this to show that every almost $\inv$-complex is locally equivalent to a standard complex, providing an explicit parameterization of $\smash{\mfIhat}$. 

\subsection{Standard complexes}  
We begin with the definition of a standard complex.

\begin{definition}\label{def:stdcomplexes}
Let $n$ be a nonnegative integer, and fix a sequence
\[
(a_1, b_2, a_3, b_4, \ldots, a_{2n-1}, b_{2n})
\]
with each $a_i \in \{-1, +1\}$ and $b_i \in \mathbb{Z} \setminus \{0\}$. We denote such a sequence using the notation $(a_i, b_i)$.\footnote{Note that the $a_i$ are indexed by odd integers, while the $b_i$ are indexed by even integers.} We define the \textit{standard complex of type $(a_i, b_i)$}, denoted by $\cC(a_i, b_i)$, as follows. As a free module over $\F[U]$, $\cC(a_i, b_i)$ is generated by $2n + 1$ elements $T_0, T_1, \ldots, T_{2n}$. For each symbol $a_i$, we introduce a single $\omega$-relation between $T_{i-1}$ and $T_i$, according to the rule
\begin{itemize}
\item If $a_i = -1$, then $\omega T_{i-1} = T_i$.
\item If $a_i = +1$, then $\omega T_i = T_{i-1}$.
\end{itemize}
Similarly, for each symbol $b_i$, we introduce a single $\partial$-relation between $T_{i-1}$ and $T_i$, according to the rule
\begin{itemize}
\item If $b_i < 0$, then $\partial T_{i-1} = U^{|b_i|} T_i$.
\item If $b_i > 0$, then $\partial T_i = U^{|b_i|} T_{i-1}$. 
\end{itemize}
We will sometimes use the symbol set $a_i \in \{-, +\}$, in order to differentiate the $a_i$ from the $b_i$. We call $2n$ the \textit{length} of the sequence $(a_i, b_i)$ and/or the complex $\cC(a_i, b_i)$. Note that if $n = 0$, then our complex has only one generator and no $\omega$- or $\partial$-relations. See Figure~\ref{fig:stdcomplexes1} for an illustration of the four general types of standard complexes in the case $n = 1$. 

\begin{figure}[h!]
    \centering
    \begin{subfigure}[b]{0.23\textwidth}
    \centering
        \begin{tikzpicture}[scale=.75, yscale=.5]
	\begin{scope}[thin, black]
		\draw [-] (-1, -2) -- (-1, -7); 
		\draw [-] (0, -2) -- (0, -7); 
		\draw [-] (1, 3) -- (1, -8); 
		\node[anchor=south east] (x) at (-1, -2) {\footnotesize $T_0$};
		\node[anchor=south west] (y) at (0, -2) {\footnotesize $T_1$};
		\node[anchor=south west] (z) at (1, 3) {\footnotesize $T_2$};
		\foreach \x in {-2,-4,-6}
    			\draw (-1,\x ) -- (-1,\x ) node (\x x) {$\bullet$};
		\foreach \y in {-2,-4,-6}
    			\draw (0,\y ) -- (0,\y ) node (\y y) {$\bullet$};
		 \foreach \z in {3, 1, -1, -3,-5,-7}
    			\draw (1,\z ) -- (1,\z ) node (\z z) {$\bullet$};
		\draw [thick, ->] (-2y) -- (-3z) ;
		\draw [thick, densely dotted, red, ->] (-2x) -- (-2y);
	\end{scope}	
\end{tikzpicture}
        \caption{$\cC(-, -3)$}
    \end{subfigure}
    \begin{subfigure}[b]{0.23\textwidth}
    \centering
        \begin{tikzpicture}[scale=.75, yscale=.5]
	\begin{scope}[thin, black]
		\draw [-] (-1, 2) -- (-1, -7); 
		\draw [-] (0, 2) -- (0, -7); 
		\draw [-] (1, -3) -- (1, -8); 
		\node[anchor=south east] (x) at (-1, 2) {\footnotesize $T_0$};
		\node[anchor=south west] (z) at (0, 2) {\footnotesize $T_1$};
		\node[anchor=south west] (y) at (1, -3) {\footnotesize $T_2$};
		\foreach \x in {2, 0, -2,-4,-6}
    			\draw (-1,\x ) -- (-1,\x ) node (\x x) {$\bullet$};
		\foreach \z in {2, 0, -2,-4,-6}
    			\draw (0,\z ) -- (0,\z ) node (\z z) {$\bullet$};
		 \foreach \y in {-3,-5,-7}
    			\draw (1,\y ) -- (1,\y ) node (\y y) {$\bullet$};
		\draw [thick, ->] (-3z) -- (-4y);
		\draw [thick, densely dotted, red, ->] (2x) -- (2z);
	\end{scope}	
\end{tikzpicture}
        \caption{$\cC(-,3)$}
    \end{subfigure}
    \begin{subfigure}[b]{0.23\textwidth}
    \centering
        \begin{tikzpicture}[scale=.75, yscale=.5]
	\begin{scope}[thin, black]
		\draw [-] (-1, -2) -- (-1, -7); 
		\draw [-] (0, -2) -- (0, -7); 
		\draw [-] (1, 3) -- (1, -8); 
		\node[anchor=south east] (x) at (-1, -2) {\footnotesize $T_0$};
		\node[anchor=south west] (y) at (0, -2) {\footnotesize $T_1$};
		\node[anchor=south west] (z) at (1, 3) {\footnotesize $T_2$};
		\foreach \x in {-2,-4,-6}
    			\draw (-1,\x ) -- (-1,\x ) node (\x x) {$\bullet$};
		\foreach \y in {-2,-4,-6}
    			\draw (0,\y ) -- (0,\y ) node (\y y) {$\bullet$};
		 \foreach \z in {3, 1, -1, -3,-5,-7}
    			\draw (1,\z ) -- (1,\z ) node (\z z) {$\bullet$};
		\draw [thick, ->] (-2y) -- (-3z) ;
		\draw [thick, densely dotted, red, <-] (-2x) -- (-2y);
	\end{scope}	
\end{tikzpicture}
        \caption{$\cC(+, -3)$}
    \end{subfigure}
 \begin{subfigure}[b]{0.23\textwidth}
    \centering
        \begin{tikzpicture}[scale=.75, yscale=.5]
	\begin{scope}[thin, black]
		\draw [-] (-1, 2) -- (-1, -7); 
		\draw [-] (0, 2) -- (0, -7); 
		\draw [-] (1, -3) -- (1, -8); 
		\node[anchor=south east] (x) at (-1, 2) {\footnotesize $T_0$};
		\node[anchor=south west] (z) at (0, 2) {\footnotesize $T_1$};
		\node[anchor=south west] (y) at (1, -3) {\footnotesize $T_2$};
		\foreach \x in {2, 0, -2,-4,-6}
    			\draw (-1,\x ) -- (-1,\x ) node (\x x) {$\bullet$};
		\foreach \z in {2, 0, -2,-4,-6}
    			\draw (0,\z ) -- (0,\z ) node (\z z) {$\bullet$};
		 \foreach \y in {-3,-5,-7}
    			\draw (1,\y ) -- (1,\y ) node (\y y) {$\bullet$};
		\draw [thick, ->] (-3z) -- (-4y);
		\draw [thick, densely dotted, red, ->] (2z) -- (2x);
	\end{scope}	
	\end{tikzpicture}
        \caption{$\cC(+, 3)$}
       
    \end{subfigure}
    \caption{Standard complexes with $n = 1$. Vertical lines represent multiplication by $U$. Red dotted horizontal arrows represent the action of $\omega = 1+\iota$ and black diagonal arrows represent the differential $\partial$; both are $U$-equivariant. The reader should compare with Figures~\ref{fig:iotacomplexes} and \ref{fig:almostiotacomplexes}; note that under the forgetful homomorphism, $X_i$ maps to $\cC(-, i)$ and $-X_i$ maps to $\cC(+, -i)$.
}\label{fig:stdcomplexes1}
\end{figure}
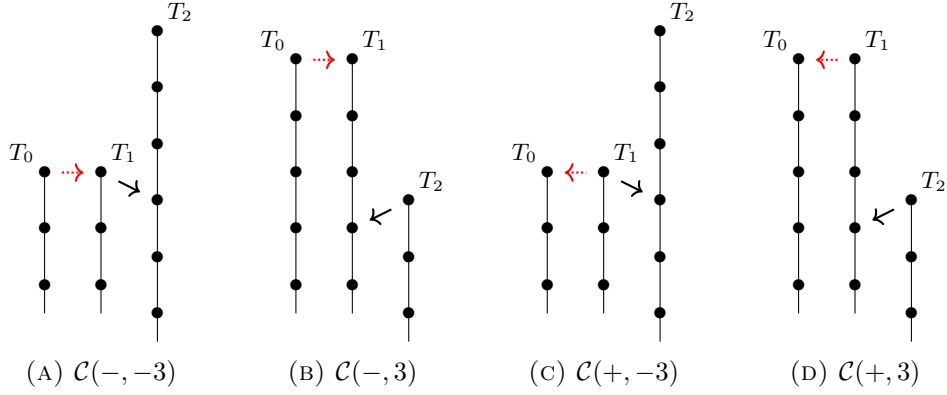

It is helpful to visualize the $T_i$ as a sequence of towers arranged from left-to-right across the page, with each tower connected to the immediately previous one by an arrow representing either an $\omega$- or a $\partial$-relation. A useful heuristic to remember is that if the $i$th symbol is negative, then the arrow between $T_{i-1}$ and $T_i$ goes from left-to-right, with the reverse being true if the $i$th symbol is positive. Note also that if $b_i < 0$, then $T_i$ appears in a higher grading than $T_{i-1}$, and vice-versa if $b_i > 0$. See Figure~\ref{fig:stdcomplexes2} for an illustrative example.

\begin{figure}[h!]

    \centering
        \begin{tikzpicture}[scale=.75, yscale=.5]
	\begin{scope}[thin, black]
		\draw [-] (-1, -2) -- (-1, -7); 
		\draw [-] (0.5, -2) -- (0.5, -7); 
		\draw [-] (1.5, 3) -- (1.5, -8); 
		\draw [-] (3, 3) -- (3, -8); 
		\draw [-] (4, -4) -- (4, -8); 
		\draw [-] (5.5, -4) -- (5.5, -8); 
		\draw [-] (6.5, -1) -- (6.5, -8); 
		\node[anchor=south east] (x) at (-1, -2) {\footnotesize $T_0$};
		\node[anchor=south west] (y) at (0.5, -2) {\footnotesize $T_1$};
		\node[anchor=south west] (z) at (1.5, 3) {\footnotesize $T_2$};
		\node[anchor=south west] (a) at (3, 3) {\footnotesize $T_3$};
		\node[anchor=south west] (b) at (4, -4) {\footnotesize $T_4$};
		\node[anchor=south west] (c) at (5.5, -4) {\footnotesize $T_5$};
		\node[anchor=south west] (d) at (6.5, -1) {\footnotesize $T_6$};
		\foreach \x in {-2,-4,-6}
    			\draw (-1,\x ) -- (-1,\x ) node (\x x) {$\bullet$};
		\foreach \y in {-2,-4,-6}
    			\draw (0.5,\y ) -- (0.5,\y ) node (\y y) {$\bullet$};
		 \foreach \z in {3, 1, -1, -3,-5,-7}
    			\draw (1.5,\z ) -- (1.5,\z ) node (\z z) {$\bullet$};
		\foreach \a in {3, 1, -1, -3,-5,-7}
    			\draw (3,\a ) -- (3,\a ) node (\a a) {$\bullet$};
		\foreach \b in {-4, -6}
    			\draw (4,\b ) -- (4,\b ) node (\b b) {$\bullet$};
		\foreach \c in {-4, -6}
    			\draw (5.5,\c ) -- (5.5,\c ) node (\c c) {$\bullet$};
		\foreach \d in {-1, -3, -5, -7}
    			\draw (6.5,\d ) -- (6.5,\d ) node (\d d) {$\bullet$};	
		\draw [thick, ->] (-2y) -- (-3z) ;
		\draw [thick, ->] (-4b) -- (-5a) ;
		\draw [thick, ->] (-4c) -- (-5d) ;
		\draw [thick, densely dotted, red, ->] (-2x) -- (-2y);
		\draw [thick, densely dotted, red, ->] (3z) -- (3a);
		\draw [thick, densely dotted, red, ->] (-4c) -- (-4b);
	\end{scope}	
\end{tikzpicture}
        \caption{The standard complex $\cC(-, -3, -, 4, +, -2)$.}
        \label{fig:stdcomplexes2}
\end{figure}

It is clear that fixing the grading of $T_0$ determines the gradings of all of the other $T_i$, via the condition that $\omega$ and $\partial$ be of degrees 0 and $-1$, respectively. We thus normalize our complex by declaring $T_0$ to have grading zero. Note that the homology of $\cC(a_i, b_i)$ has a single nontorsion tower, which is generated by the class of $T_0$. There are also $n$ torsion towers of height $|b_i|$, each generated by the class of either $T_{i-1}$ or $T_i$, depending on the sign of $b_i$. It is clear from the definition that $\cC(a_i, b_i)$ is an almost $\inv$-complex.

We define the \textit{$i$th symbol of $\cC(a_i, b_i)$} to be the symbol with index $i$ in the associated sequence $(a_i, b_i)$. When we need to refer to the $i$th symbol without specifying whether it is an $a_i$ or a $b_i$, we will use the notation $t_i$. 
\end{definition}


\subsection{The lexicographic order}
We now introduce a lexicographic order on the set of standard complexes. Let $\Zbang = (\Z, \leqbang)$ denote the integers with the order:
\[ -1 \lebang -2 \lebang -3 \lebang \cdots \lebang  0 \lebang \cdots \lebang 3 \lebang 2 \lebang 1. \]
Note that for  $t \in \Zbang$, we have $t \lebang 0$ if and only if $t < 0$ in the usual sense. 

We obtain a lexicographic order (which we denote by $\leq$) on the set of sequences $(a_i, b_i)$ as in Definition~\ref{def:stdcomplexes} by treating the entries as elements of $\Zbang$ and using the lexicographic order induced by $\leqbang$. We use the convention that in order to compare two sequences of different lengths, we append sufficiently many trailing zeros to the shorter sequence so that the sequences have the same length. Explicitly, let $(a_i, b_i)$ and $(a_i', b_i')$ be two sequences of length $2m$ and $2n$, respectively. If $(a_i, b_i) < (a_i', b_i')$, then either:
\begin{enumerate}
\item There is some index $k$ such that $t_i = t_i'$ for all $i < k$, and $t_k \lebang t_k'$;
\item The sequence $(a_i, b_i)$ appears as a prefix of $(a_i', b_i')$, and $a_{2m+1}' > 0$; or,
\item The sequence $(a_i', b_i')$ appears as a prefix of $(a_i, b_i)$, and $a_{2n+1} < 0$.
\end{enumerate}

\begin{remark}
It is worth noting that the $\lebang$ order on $\Z \setminus \{0\}$ can also be defined by setting $x \lebang y$ whenever $1/x < 1/y$. This is extended to all of $\Z$ by comparing elements to zero via their sign.
\end{remark}

The central result of this section will be to show that the lexicographic order on sequences coincides with the total order defined by Theorem~\ref{thm:totalorder}. The proof of this consists of a series of straightforward but technical verifications utilizing Definitions~\ref{def:stdcomplexes} and \ref{def:aim}. We have been fairly explicit with the casework throughout, in the hopes that the reader will become comfortable with various manipulations involving standard complexes.

\begin{lemma}\label{lem:lexico1}
If $(a_i, b_i) \leq (a_i', b_i')$ in the lexicographic order, then $\cC(a_i, b_i) \leq \cC(a_i', b_i')$ in $\smash{\mfIhat}$. 
\end{lemma}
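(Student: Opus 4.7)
The plan is to construct, in each of the three defining cases of the lexicographic order, an explicit almost $\iota$-morphism $f \colon \cC(a_i,b_i) \to \cC(a_i',b_i')$ that sends $T_0$ to $T_0'$. Because a standard complex has a single $U$-nontorsion tower based at $T_0$, any such $f$ automatically induces an isomorphism on $U^{-1}H_\ast$ and is therefore a local map. By Lemma~\ref{lem:reducednice}, the verification then reduces to checking $\ff[U]$-linearity, the chain-map condition, and $f\omega + \omega f \in \im U$.

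Let $k$ denote the first index at which the padded sequences differ. In every case I set $f(T_j) = T_j'$ for $j < k$ and (when $T_j$ exists in $\cC(a_i,b_i)$) $f(T_j) = 0$ for $j > k$. The variable piece is the value of $f(T_k)$. Case (2) has $k = 2m+1$ beyond the index range of $\cC(a_i,b_i)$, so $f$ is the obvious inclusion $T_j \mapsto T_j'$ for $j \leq 2m$. In case (3) I simply extend by zero past index $2n$. In case (1), where $k$ is an actual index of both sequences, there are two sub-cases:
\begin{itemize}
\item if $k$ is odd, then necessarily $a_k = -1$ and $a_k' = +1$, and I set $f(T_k)=0$;
\item if $k$ is even, I set $f(T_k) = U^d T_k'$, where $d$ is the non-negative integer needed to intertwine the $\partial$-relation at index $k-1$: explicitly $d = |b_k'|-|b_k|$ if both $b_k,b_k'$ are negative, $d = |b_k|-|b_k'|$ if both are positive, and $f(T_k)=0$ if $b_k<0<b_k'$.
\end{itemize}

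The chain-map verification is local to the indices $k-1, k, k+1$ and reduces in each case to a single identity at $T_{k-1}$, which the value of $f(T_k)$ was designed to satisfy. Away from $k$ the $\partial$- and $\omega$-relations either match by the prefix agreement (for $j < k$) or have all their targets killed (for $j > k$, since relations in a standard complex only connect adjacent generators). The $\omega$-equation $f\omega + \omega f \equiv 0 \bmod U$ is immediate on the prefix, and at the divergence index either both $f\omega(T_k)$ and $\omega f(T_k)$ vanish (cases 1a, 2, 3, and the mixed-sign sub-case of 1b) or the nonzero side $\omega f(T_k) = U^d \omega(T_k')$ is visibly a multiple of $U$ (the same-sign sub-cases of 1b, where $d \geq 1$).

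The main obstacle is the mixed-sign sub-case of 1b, where $b_k < 0 < b_k'$ and the $\partial$-arrow at index $k$ reverses direction between the two complexes. The resolution is the observation that the positivity of $b_k'$ forces $\partial T_{k-1}' = 0$ in $\cC(a_i',b_i')$, so although $\partial T_{k-1} = U^{|b_k|}T_k \neq 0$ on the source, the chain-map equation $\partial f(T_{k-1}) = f(\partial T_{k-1})$ holds with both sides zero once we set $f(T_k)=0$. This asymmetry, which is exactly what is encoded by placing all negative integers below all positive integers in the $\lebang$ order, is what allows a uniform construction across all sign configurations.
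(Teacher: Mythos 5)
Your proposal is correct and follows essentially the same route as the paper's proof: the same map (identity on the common prefix, zero beyond the divergence index, and $f(T_k)=U^{|b_k-b_k'|}T_k'$ or $0$ at the divergence index depending on the sign configuration), with the same case-by-case verification that it is a chain map commuting with $\omega$ modulo $U$ and sends $T_0$ to $T_0'$, hence is local.
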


\begin{proof}
Let $(a_i, b_i)$ and $(a_i', b_i')$ be of length $2m$ and $2n$, respectively, and suppose that $(a_i, b_i) \leq (a_i', b_i')$. We construct a local map $f$ from $\cC = \cC(a_i, b_i)$ to $\cC' = \cC(a_i', b_i')$ as follows. If $(a_i, b_i) = (a_i', b_i')$, then we can take $f$ to be the identity map, so we assume $(a_i, b_i) < (a_i', b_i')$. Denote the generators of $\cC$ by $T_i$ and the generators of $\cC'$ by $T_i'$.

First, suppose that $(a_i, b_i)$ and $(a_i', b_i')$ agree up to index $k-1$, and that their $k$th symbols differ. We define $f$ on all generators except $T_{k}$ by
\[
f(T_i) = 
\begin{cases}
T_i' &\text{ if } i < k\\
0 & \text{ if } i > k.
\end{cases}
\]
Note that $f(T_0) = T_0'$. We define $f(T_{k})$ according to the following casework:
\begin{enumerate}
\item Suppose $k$ is odd, so that $a_k \lebang a_k'$. This means that $a_k = -$ and $a_k' = +$. Then we define $f(T_{k}) = 0$. Note that $f$ provides an isomorphism between the generators and arrows of $\cC$ lying to the left of $T_{k-1}$ (inclusive) and the analogous generators and arrows of $\cC'$; moreover, all of the generators to the right of $T_{k-1}$ are sent to zero by $f$. It is thus straightforward to check that $f$ commutes with $\partial$, since every $\partial$-arrow in $\cC$ either lies to the left of $T_{k-1}$, or has both endpoints mapped to zero. To check that $f\omega + \omega f \equiv 0$ mod $U$, the only nontrivial cases to verify are $T_{k-1}$ and $T_k$. For these, we have:
\[
\begin{cases}
\omega f (T_{k-1}) = \omega T_{k-1}' = 0 \\
f (\omega T_{k-1}) = f (T_k) = 0
\end{cases}
\text{ and }\ \ \ 
\begin{cases}
\omega f (T_{k}) = \omega(0) = 0 \\
f (\omega T_{k}) = f (0) = 0.
\end{cases}
\]
\item Suppose $k$ is even, so that $b_k \lebang b_k'$. There are now two subcases: 
\begin{enumerate}
\item Suppose that $b_k$ and $b_k'$ have the same sign. Then we define
\[
f(T_{k}) = U^{|b_k - b_k'|}T_{k}'. 
\]
It is straightforward to check that $f$ commutes with $\partial$, so we are left with verifying that $f\omega + \omega f \equiv 0$ mod $U$. For all generators except possibly $T_k$ and $T_{k+1}$, this is immediate; while for $T_k$ and $T_{k+1}$, the desired equality holds since $f(T_k) \equiv 0$ mod $U$ and $f(T_{k+1}) = 0$.
\item Suppose that $b_k < 0$ and $b_k' > 0$. Then we define $f(T_k) = 0$. Checking that $f\omega + \omega f \equiv 0$ mod $U$ is straightforward, since every $\omega$-arrow in $\cC$ either lies to the left of $T_{k-1}$, or has both endpoints mapped to zero. To check that $f$ commutes with $\partial$, the only nontrivial cases to verify are $T_{k-1}$ and $T_k$. We have:
\[
\ \ \ \ \ \ \ \ \ \ \ \ \ 
\begin{cases}
\partial f (T_{k-1}) = \partial T_{k-1}' = 0 \\
f (\partial T_{k-1}) = f (U^{|b_k|}T_{k}) = 0
\end{cases}
\text{ and }\ \ \ 
\begin{cases}
\partial f (T_{k}) = \partial(0) = 0 \\
f (\partial T_{k}) = f (0) = 0.
\end{cases}
\]
\end{enumerate}
\end{enumerate}

Now suppose that one of $(a_i, b_i)$ and $(a_i', b_i')$ appears as a prefix of the other. First consider the case when $m < n$ and $a_{2m+1}' > 0$. Then we define $f(T_i) = T_i'$ for all $0 \leq i \leq 2m$. The only nontrivial verification in this case is:
\[
\begin{cases}
\omega f (T_{2m}) = \omega T_{2m}' = 0 \\
f (\omega T_{2m}) = f (0) = 0.
\end{cases}
\]
Now suppose $m > n$ and $a_{2n+1} < 0$. Then we define $f(T_i) = T_i'$ for all $0 \leq i \leq 2n$, and zero otherwise. The only nontrivial verifications in this case are:
\[
\begin{cases}
\omega f (T_{2n}) = \omega T_{2n}' = 0 \\
f (\omega T_{2n}) = f (T_{2n+1}) = 0
\end{cases}
\text{ and }\ \ \ 
\begin{cases}
\omega f (T_{2n+1}) = \partial(0) = 0 \\
f (\omega T_{2n+1}) = f (0) = 0.
\end{cases}
\]
This defines $f$ in all cases. Since $f(T_0) = T_0'$, we see that $f$ is evidently a local map. This completes the proof.
\end{proof}

In order to prove the converse of Lemma~\ref{lem:lexico1}, it will be helpful to begin with a lemma which constrains local maps between standard complexes. Let $\cC$ and $\cC'$ be two standard complexes whose associated sequences agree up to index $k$. As in the proof of Lemma~\ref{lem:lexico1}, there is an obvious identification between the generators $T_i$ of $\cC$ and the generators $T_i'$ of $\cC'$ for $0 \leq i \leq k$. However, it is easy to check that a local map does not necessarily have to take $T_i$ to $T_i'$, even if $\cC = \cC'$. Instead, we have the following:

\begin{lemma}\label{lem:lexico2}
Let $\cC = \cC(a_i, b_i)$ and $\cC' = \cC(a_i', b_i')$ be two standard complexes, and suppose that $(a_i, b_i)$ and $(a_i', b_i')$ agree up to index $k$. Let $f\co \cC \rightarrow \cC'$ be a local map. Then $f(T_i)$ is supported by $T_i'$ for all $0 \leq i \leq k$.
\end{lemma}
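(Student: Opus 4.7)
The plan is to establish this by strong induction on $i$, proving at each stage that the coefficient of $T_i'$ (as a $U^0$-multiple) in $f(T_i)$ equals $1 \in \F$. Throughout, for each such $j$ I would write $f(T_j) = T_j' + R_j$, where $R_j$ is a homogeneous $\F[U]$-linear combination of the remaining basis elements of $\cC'$ lying in grading $\deg T_j$.

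For the base case $i=0$: since $f$ is a local map, it induces an isomorphism on $U^{-1}H_*$. The nontorsion tower of $H_*(\cC')$ is generated by $[T_0']$ at grading zero, and a direct inspection of the standard complex shows that every other generator $T_j'$ of $\cC'$ lying in grading zero is either not a cycle or represents a $U$-torsion class. Hence the $U^0 T_0'$ coefficient in $f(T_0)$ must be $1$.

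For the inductive step, assume the claim holds for all indices up to $i$, with $i < k$. The argument splits into four sub-cases according to the $(i{+}1)$-st symbol. If $i+1$ is odd with $a_{i+1} = -$, I would use $f\omega \equiv \omega f \bmod U$ to obtain
\[ f(T_{i+1}) \equiv \omega f(T_i) \equiv T_{i+1}' + \omega R_i \bmod U, \]
since $\omega T_i' = T_{i+1}'$. The sub-case $a_{i+1} = +$ is handled dually: writing $f(T_{i+1}) = \alpha T_{i+1}' + R_{i+1}'$ and extracting the $T_i'$ coefficient from $\omega f(T_{i+1}) \equiv f(T_i) \bmod U$ forces $\alpha = 1$. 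For the two $\partial$-based sub-cases, with $i+1$ even, the chain-map identity $\partial f(T_i) = U^{|b_{i+1}|} f(T_{i+1})$ (when $b_{i+1} < 0$) or $\partial f(T_{i+1}) = U^{|b_{i+1}|} f(T_i)$ (when $b_{i+1} > 0$) reduces the problem to analyzing the $U^{|b_{i+1}|} T_{i+1}'$ (resp.\ $U^{|b_{i+1}|} T_i'$) coefficients of $\partial R_i$ (resp.\ $\partial R_{i+1}'$), after which solving for $f(T_{i+1})$ is straightforward.

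In every sub-case the crux will be to show that the error term $\omega R_j$ or $\partial R_j$ contributes nothing to the relevant $U^0$-coefficient. This rests on two observations: the $\omega$- and $\partial$-arrows in a standard complex are strictly local, in the sense that each generator $T_j'$ of $\cC'$ is involved in at most one $\omega$-arrow and at most one $\partial$-arrow, each to an immediate neighbor; and a parity-of-grading check (using that $\deg T_{i+2}' - \deg T_i'$ is an odd integer) eliminates contributions from generators too far from $i$. Combining these reduces the possible culprits to $j=i$ in $R_i$ (or $j=i+1$ in $R_{i+1}'$), whose relevant $U^0$ coefficients vanish by construction. I expect the main obstacle to be the bookkeeping across the four sub-cases, in particular carefully tracking which generators at the relevant grading can produce a $T_{i+1}'$-summand under $\omega$ or $\partial$ and verifying their exclusion in each case.
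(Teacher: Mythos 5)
Your proof is correct and follows essentially the same route as the paper's: strong induction on the index $i$, with the same four sub-cases (parity of $i+1$ and sign of the $(i{+}1)$-st symbol), and the same decisive structural fact that in a standard complex each $\omega$- or $\partial$-arrow connects $T_{i+1}'$ only to $T_i'$ at the relevant index, so the error term $\omega R_i$ or $\partial R_i$ cannot produce a $U^0$-multiple of the target generator. The only cosmetic difference is that you add a parity-of-grading check to rule out $T_{i+2}'$ as a contributor; this is harmless but actually redundant, since for $i+1$ odd the unique $\omega$-arrow containing $T_{i+1}'$ is at index $i+1$ (to $T_i'$) and never at index $i+2$ (which carries a $\partial$-arrow), and symmetrically for $i+1$ even — so the locality of the arrows alone already eliminates all spurious contributions without appeal to gradings.
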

\begin{proof}
For $i=0$, the claim is an immediate consequence of the definition of a local map. We proceed by induction. Say we have established the claim for $i < k$. We prove the claim for index $i + 1$. If $i+1$ is odd, we have the casework:
\begin{enumerate}
\item Assume $a_{i+1} = a_{i+1}' > 0$. Then $\omega T_{i+1} = T_i$, so $\omega f(T_{i+1})\equiv f(T_i) \bmod{U}$. By inductive hypothesis, it follows that $\omega f(T_{i+1})$ is supported by $T_i'$. This is only possible if $f(T_{i+1})$ is supported by $T_{i+1}'$, since $T_{i+1}'$ is the unique generator $g$ of $\cC'$ for which $T_i'$ appears in $\omega g$.
\item Assume $a_{i+1} = a_{i+1}' < 0$. Then $\omega T_i = T_{i+1}$, so $\omega f(T_i)\equiv f(T_{i+1})\bmod{U}$. By inductive hypothesis, $f(T_i)$ is supported by $T_i'$. Since $T_i'$ is the only generator $g$ of $\cC'$ for which $T_{i+1}'$ appears in $\omega g$, we see that $\omega f(T_i)$ is supported by $\omega T_i' = T_{i+1}'$. It follows that $f(T_{i+1})$ is supported by $T_{i+1}'$, as desired.
\end{enumerate}
\noindent
If $i + 1$ is even, let $\eta = |b_{i+1}| = |b_{i+1}'|$. Consider the casework:
\begin{enumerate}
\item Assume $b_{i+1} = b_{i+1}' > 0$. Then $\partial T_{i+1} =U^\eta T_i$, so $\partial f(T_{i+1})=U^\eta f(T_i)$. By inductive hypothesis, it follows that $\partial f(T_{i+1})$ is supported by $U^\eta T_i'$. This is only possible if $f(T_{i+1})$ is supported by $T_{i+1}'$, since $T_{i+1}'$ is the unique generator of $\cC'$ whose differential contains any $U$-power of $T_i'$.  
\item Assume $b_{i+1} = b_{i+1}' < 0$. Then $\partial T_i = U^\eta T_{i+1}$, so $\partial f(T_i)=U^\eta f(T_{i+1})$. By inductive hypothesis, $f(T_i)$ is supported by $T_i'$. Since $T_i'$ is the only generator of $\cC$ whose differential contains a $U$-power of $T_{i+1}'$, we see that $\partial f(T_i)$ is supported by $\partial T_i' = U^\eta T_{i+1}'$. It follows that $f(T_{i+1})$ is supported by $T_{i+1}'$, as desired.
\end{enumerate}
\noindent
This completes the induction.
\end{proof}

We now establish the converse of Lemma~\ref{lem:lexico1}:

\begin{lemma}\label{lem:lexico3}
If $(a_i, b_i) > (a_i', b_i')$ in the lexicographic order, then $\cC(a_i, b_i) > \cC(a_i', b_i')$ in $\smash{\mfIhat}$.
\end{lemma}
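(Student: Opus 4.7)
The plan is to combine this with Lemma~\ref{lem:lexico1} applied with the roles of the two sequences reversed, which already gives $\cC(a_i', b_i') \leq \cC(a_i, b_i)$ in $\mfIhat$. Thus it suffices to show that no local map $f \co \cC \to \cC'$ exists, where $\cC = \cC(a_i, b_i)$ and $\cC' = \cC(a_i', b_i')$; the total order of Theorem~\ref{thm:totalorder} will then promote the weak inequality to a strict one. I would assume such an $f$ exists for contradiction. By Lemma~\ref{lem:lexico2}, on the longest shared prefix we have $f(T_i) = T_i' + \alpha_i$, where $\alpha_i$ is an $\ff[U]$-linear combination of generators other than $T_i'$ together with terms of the form $U^p T_i'$ for $p \geq 1$. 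The contradiction will come from evaluating $\omega f$ or $\partial f$ on the first generator past this prefix.

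There are three cases, dictated by the definition of $(a_i, b_i) > (a_i', b_i')$ in the lexicographic order. In the first, the sequences differ at some index $k$ with $t_k \gebang t_k'$. If $k$ is odd, then $a_k = +$ and $a_k' = -$, so $\omega T_{k-1} = 0$ in $\cC$ but $\omega T_{k-1}' = T_k'$ in $\cC'$; hence $\omega f(T_{k-1}) \equiv f(\omega T_{k-1}) \equiv 0 \bmod U$ forces $T_k'$ to be killed mod $U$ by $\omega \alpha_{k-1}$, which is impossible because the only generator whose $\omega$-image contains $T_k'$ is $T_{k-1}'$ itself, and its $U^0$-coefficient in $\alpha_{k-1}$ is zero. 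If $k$ is even, then $b_k \gebang b_k'$ falls into three sub-cases (both positive with $b_k < b_k'$, both negative with $|b_k| > |b_k'|$, or $b_k > 0 > b_k'$), each of which I would handle by comparing $U$-valuations in the equation $\partial f(T_k) = U^{b_k} f(T_{k-1})$ when $b_k > 0$, or $\partial f(T_{k-1}) = U^{|b_k|} f(T_k)$ when $b_k < 0$: one side forces the $T_{k-1}'$- or $T_k'$-coefficient to have a certain leading $U$-power, while the structure of $\cC'$ (in particular, the unique $\partial$-arrow at $T_{k-1}'$) forces a strictly different one.

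The remaining two cases are the prefix situations: $(a_i', b_i')$ is a prefix of $(a_i, b_i)$ with $a_{2n+1} > 0$, or vice versa with $a_{2m+1}' < 0$. In the first, $\omega T_{2n+1} = T_{2n}$ in $\cC$ gives $\omega f(T_{2n+1}) \equiv T_{2n}' + \omega\alpha_{2n} \bmod U$; but $T_{2n}'$ is the top generator of $\cC'$ and no $\omega$-arrow in $\cC'$ has it as target, so this cannot be matched by $\omega f(T_{2n+1})$. The second prefix case is symmetric, using $\omega T_{2m} = 0$ in $\cC$ versus $\omega T_{2m}' = T_{2m+1}'$ in $\cC'$ to produce an uncancellable $T_{2m+1}'$ in $\omega f(T_{2m})$.

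I expect the main obstacle to be purely administrative: each sub-case is essentially a one-line calculation, but uniform bookkeeping of signs and $U$-valuations across all of them is fiddly. The unifying fact that makes each argument go through is that each $T_j'$ is the target of at most one $\omega$-arrow and at most one $\partial$-arrow in $\cC'$, so that together with the rigidity provided by Lemma~\ref{lem:lexico2} there is no room for the correction term $\alpha_i$ to absorb the obstruction.
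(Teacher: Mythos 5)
Your proposal is correct and follows essentially the same route as the paper's proof: assume a local map $f \co \cC(a_i,b_i) \to \cC(a_i',b_i')$, use the rigidity from Lemma~\ref{lem:lexico2} on the shared prefix, and derive a contradiction at the first divergence via the same case analysis (odd index through the $\omega$-condition, even index through $U$-valuation comparisons in the unique $\partial$-arrow of $\cC'$ at that spot, plus the two prefix cases), with the strictness then coming from Lemma~\ref{lem:lexico1} in the reverse direction. The only quibble is the stray $\omega$ in ``$\omega f(T_{2n+1}) \equiv T_{2n}' + \omega\alpha_{2n}$,'' which should read $T_{2n}' + \alpha_{2n}$ (i.e.\ $f(T_{2n})$ itself); the conclusion you draw from it is nonetheless the right one.
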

\begin{proof}
Let $(a_i, b_i)$ and $(a_i', b_i')$ be of length $2m$ and $2n$, respectively, and suppose that $(a_i, b_i) > (a_i', b_i')$. Write $\cC = \cC(a_i, b_i)$ and $\cC' = \cC(a_i', b_i')$, and denote the generators of $\cC$ and $\cC'$ by $T_i$ and $T_i'$. To establish the lemma, we proceed by contradiction. Thus, suppose that there is a local map $f \co \cC \rightarrow \cC'$. 

First, suppose that $(a_i, b_i)$ and $(a_i', b_i')$ agree up to index $k-1$, and that their $k$th symbols differ. By Lemma~\ref{lem:lexico2}, we have that $f(T_{k-1})$ is supported by $T_{k-1}'$. There are two cases depending on the parity of $k$. Suppose $k$ is odd, so that $a_k = +$ and $a_k' = -$. Then $\omega f(T_{k-1})$ is supported by $\omega T_{k-1}' = T_k'$, but $f (\omega T_k) = f(0) = 0$, a contradiction. Thus, we may assume that $k$ is even and $b_k \gebang b_k'$. There are three further subcases:
\begin{enumerate}
\item Suppose $b_k \gebang b_k' > 0$, so that $\partial T_k = U^{|b_k|}T_{k-1}$. Then $\partial f(T_{k})$ is supported by $U^{|b_k|} T_{k-1}'$. This is impossible, since $T_{k-1}'$ only appears in the image of $\partial$ with at least a $U$-power of $|b_k'|$, and $|b_k| < |b_k'|$. 
\item Suppose $b_k' \lebang b_k < 0$, so that $\partial T_{k-1} = U^{|b_k|}T_{k}$. This implies that $f (\partial T_{k-1})$ is in the image of multiplication by $U^{|b_k|}$. On the other hand, $f (\partial T_{k-1}) = \partial f(T_{k-1})$, which is supported by $\partial T_{k-1}' = U^{|b_k'|}T_k'$. This contradicts the fact that $|b_k'| < |b_k|$. 
\item Suppose $b_k > 0$ and $b_k' < 0$. Then $\partial f(T_{k-1})$ is supported by $\partial T_{k-1}' = U^{|b_k'|} T_k'$, but $f (\partial T_{k-1}) = f(0) = 0$, a contradiction.
\end{enumerate}

Now consider the case when one of $(a_i, b_i)$ and $(a_i', b_i')$ appears as a prefix of the other. First, suppose that $m < n$ and $a_{2m+1}' < 0$. Then $\omega f(T_{2m})$ is supported by $\omega T_{2m}' = T_{2m+1}'$, but $f(\omega T_{2m}) = f(0) = 0$, a contradiction. Thus, suppose that $m > n$ and $a_{2n+1} > 0$. Then $f(\omega T_{2n+1}) = f(T_{2n})$, which is supported by $T_{2n}'$. On the other hand, $f(\omega T_{2n+1}) \equiv \omega f(T_{2n+1}) \bmod U$. Since $T_{2n}'$ does not appear as a term in any image of $\omega$, this is a contradiction. This completes the proof.
\end{proof}

Putting Lemmas~\ref{lem:lexico1} and \ref{lem:lexico3} together immediately yields:

\begin{theorem}\label{thm:lexico}
Let $\cC = \cC(a_i, b_i)$ and $\cC' = \cC(a_i', b_i')$ be two standard complexes. Then $\cC \leq \cC'$ if and only if $(a_i, b_i) \leq (a_i', b_i')$ in the lexicographic order.
\end{theorem}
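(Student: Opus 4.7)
The plan is to obtain Theorem~\ref{thm:lexico} as an immediate corollary of the two preceding lemmas, together with the totality established in Theorem~\ref{thm:totalorder}. The forward implication --- if $(a_i, b_i) \leq (a_i', b_i')$ in the lexicographic order, then $\cC(a_i, b_i) \leq \cC(a_i', b_i')$ in $\smash{\mfIhat}$ --- is literally the statement of Lemma~\ref{lem:lexico1}, so there is nothing to do on that side beyond quoting it.

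For the converse, I would argue by contrapositive. Suppose $(a_i, b_i) > (a_i', b_i')$. Then Lemma~\ref{lem:lexico3} furnishes the strict inequality $\cC(a_i, b_i) > \cC(a_i', b_i')$; concretely, its proof shows that no local map $\cC(a_i, b_i) \to \cC(a_i', b_i')$ can exist, by analyzing the forced support of $f(T_i)$ via Lemma~\ref{lem:lexico2} and deriving a contradiction at the first index of disagreement (with separate casework according to the parity of that index and the signs of the $b_i$'s at that position, plus the two prefix cases). Unwinding the definition of $\leq$ on $\smash{\mfIhat}$ from Definition~\ref{def:partialorder}, this nonexistence is exactly the statement $\cC(a_i, b_i) \not\leq \cC(a_i', b_i')$, which is the desired contradiction.

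In principle one could sharpen this to say that the lexicographic order is a trichotomy on sequences, and the order on standard complexes is a trichotomy in $\smash{\mfIhat}$ by Theorem~\ref{thm:totalorder}, so the biconditional follows once we know either direction is monotone; but it seems cleanest just to invoke both lemmas. I do not anticipate any obstacle: all of the combinatorial work has already been executed in Lemmas~\ref{lem:lexico1}, \ref{lem:lexico2}, and~\ref{lem:lexico3}, and Theorem~\ref{thm:lexico} is essentially a bookkeeping statement packaging those results.
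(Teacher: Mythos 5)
Your proposal is correct and matches the paper's argument: the paper likewise deduces Theorem~\ref{thm:lexico} immediately by combining Lemma~\ref{lem:lexico1} (lexicographic inequality implies an explicit local map) with Lemma~\ref{lem:lexico3} (the reverse strict lexicographic inequality rules out any local map). Your extra remark about trichotomy is unnecessary, as you note, but harmless.
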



\section{Technical Notions}
\label{sec:5}
We now collect together several (seemingly unrelated) technical lemmas and definitions that will prove useful in Section~\ref{sec:6}. The main purpose of our work here will be to establish a more flexible language for dealing with standard complexes and almost $\inv$-maps.

\subsection{Augmented complexes}
We begin with slight enlargement of the class of standard complexes.
\begin{definition}\label{def:augcomplexes} 
Let
\[
(a_1, b_2, \ldots, a_{2n-1}, b_{2n}, a_{2n+1})
\]
be a sequence as in Definition~\ref{def:stdcomplexes}, but now ending in a symbol of odd (rather than even) index. We define a complex $\cC(a_1, b_2, \dots, a_{2n+1})$ in the same way as before, except that now we have $2n + 2$ generators $T_0, \ldots, T_{2n+1}$, and the final arrow is an $\omega$-arrow rather than a $\partial$-arrow. We call such a complex an \textit{augmented complex} and denote it using the same notation $\cC(t_i)$. Note that the homology of an augmented complex has two nontorsion towers, generated by the classes of $T_0$ and $T_{2n+1}$, so an augmented complex is not quite an almost $\inv$-complex in the sense of Definition~\ref{def:aic}.
\end{definition}

Definition~\ref{def:aim} still provides a notion of an almost $\inv$-morphism between two augmented complexes, or between an augmented complex and a standard complex. However, since augmented complexes have two nontorsion towers, we need to modify our definition of a local map:

\begin{definition}\label{def:augmorphism}
Let $\cC$ be an augmented complex and $\cA$ be any almost $\inv$-complex. Let $f \co \cC \rightarrow \cA$ be an almost $\inv$-morphism. If $f$ takes the class of $T_0$ to a nontorsion element in the homology of $\cA$, we say that $f$ is \textit{forwards local}; while if $f$ takes the class of $T_{2n+1}$ to a nontorsion element, we say that $f$ is \textit{backwards local}. If both of these hold, then we say that $f$ is \textit{totally local}.
\end{definition}

Now let $\cC$ be an augmented complex with generators $T_0, \ldots, T_{2n+1}$. If we reflect $\cC$ across a vertical line lying between $T_n$ and $T_{n+1}$, then we obtain a new augmented complex $\rC$, whose generators are the same as those of $\cC$ but listed in the reverse order. There is a slight subtlety in the case that the final generator of $\cC$ does not lie in grading zero, since we then have to shift the grading of our reversed complex so as to satisfy our normalization conventions. However, in this paper, we will only ever consider $\rC$ in the the case where the final generator of $\cC$ has grading zero. We formalize this in the following definition:

\begin{definition}\label{def:reversal}
Let $k = 2n +1$, and let $\cC = (t_1, \ldots, t_k)$ be an augmented complex whose final generator $T_k$ lies in grading zero. Then the \textit{reversed complex} $\rC = (\overline{t}_1, \ldots, \overline{t}_k)$ is the augmented complex defined by
\[
\overline{t}_i = - t_{k + 1 - i}
\]
for all $1 \leq i \leq k$. If $f \co \cC \rightarrow \cA$ is an almost $\inv$-map, then we also obtain a \textit{reversed map} $\overline{f} \co \rC \rightarrow \cA$. Denoting the generators of $\rC$ by $\overline{T}_0, \dots, \overline{T}_k$, this is defined by
\[
\overline{f}(\overline{T}_i) = f(T_{k - i})
\]
for all $0 \leq i \leq k$. It is easily checked that $\overline{f}$ is an almost $\inv$-map, which is backwards local if $f$ is forwards local (and vice-versa).
\end{definition}

\begin{remark}\label{rmk:reversal}
Note that since $k = 2n + 1$ is odd, $\rC$ is never equal to $\cC$. In particular, we have $\overline{t}_{n+1} = - t_{n+1}$. This cannot be equal to $t_{n+1}$, since $t_{n+1}$ is nonzero.
\end{remark}


\subsection{The extension lemma and the merge lemma}
We now define a slight modification of the notion of an almost $\inv$-map. 

\begin{definition}
\label{def:shortmaps}

Let $\cA$ be an almost $\inv$-complex. If $\cC$ is a standard complex of length $2n$, then we say that a map $f$ from $\cC$ to $\cA$ is a \textit{short map} if 
\begin{enumerate}
\item $\partial f + f \partial = 0$; and
\item $\omega f(T_i) + f(\omega T_i) \equiv 0 \bmod U$ for all $0 \leq i \leq 2n - 1$.
\end{enumerate}
If $\cC$ is an augmented complex of length $2n+1$, then we say $f$ is a short map if
\begin{enumerate}
\item $\omega f + f \omega \equiv 0 \bmod U$; and
\item $\partial f(T_i) + f(\partial T_i) = 0$ for all $0 \leq i \leq 2n$.
\end{enumerate}
Thus, a short map is just an almost $\inv$-map where we waive the $\omega$- or $\partial$-condition on the final generator, depending on whether $\cC$ is a standard or augmented complex, respectively. Note that if $\cC$ is an augmented complex, then a short map is \textit{not} necessarily a chain map. In both cases, we denote the presence of a short map using the notation 
\[
f \co \cC \leadsto \cA.
\] 
If $f(T_0)$ is $U$-nontorsion class in $\cA$, then we say that $f$ is a \textit{local short map}. (This definition is the same for both standard and augmented complexes.)
\end{definition}

It will be useful for us to have the following basic terminology:

\begin{definition}\label{def:extension}
Let $\cC = \cC(t_i)$ be a standard or augmented complex. If $\cC' = \cC(t_i')$ is another standard or augmented complex, then we say that $\cC'$ \textit{extends} $\cC$ if $(t_i)$ is a prefix of $(t_i')$. 
\end{definition}

\begin{definition}\label{def:truncation}
Let $\cC = \cC(t_i)$ be a standard or augmented complex of length $k > 0$. For convenience, we define the \textit{truncation} of $\cC$ to be the complex corresponding to the prefix of $(t_i)$ with length $k - 1$.
\end{definition}

Note that if $\cC'$ extends $\cC$, then the obvious inclusion of $\cC$ into $\cC'$ is a local short map. Moreover, if we have a short map $f \co \cC' \leadsto \cA$, then precomposing with this inclusion gives a short map from $\cC$ into $\cA$ also. 

The following lemma says that if $\cC$ is a standard complex and $\cA$ is an almost $\inv$-complex, then any short map $f \co \cC \leadsto \cA$ can be extended to a genuine almost $\inv$-map into $\cA$, possibly from a larger domain. We leave it to the reader to formulate the analogous lemma for augmented complexes.

\begin{lemma}[Extension Lemma]\label{lem:extension}
Let $\cC$ be a standard complex, and let
\[
f\co \cC \leadsto \cA
\]
be a short map from $\cC$ to some almost $\inv$-complex $\cA$. Then there is a standard complex $\cC'$ extending $\cC$, together with a genuine almost $\inv$-map
\[
f'\co \cC' \rightarrow \cA
\] 
which agrees with $f$ on the generators of $\cC$.\footnote{Here, the generators of $\cC$ are viewed as generators of $\cC'$ in the obvious way.} Moreover, if $f$ is local, then $f'$ is local.
\end{lemma}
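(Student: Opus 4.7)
The strategy is to extend $\cC$ by adjoining pairs of symbols iteratively, each pair absorbing the current $\omega$-defect of the short map. By Lemmas~\ref{lem:reduced} and \ref{lem:reducednice}, I may assume $\cA$ is reduced, so both $\partial \equiv 0 \bmod U$ and $\omega^2 \equiv 0 \bmod U$ hold as literal equalities. Since $\cC = \cC(t_1, \ldots, t_{2n})$ ends in a $b$-symbol, the final generator $T_{2n}$ has no $\omega$-relation in $\cC$, so the only potentially nontrivial short-map defect is
\[
e = \omega f(T_{2n}) \in \cA.
\]
A direct computation using $\omega^2 \equiv 0 \bmod U$ on $\cA$ together with $\partial \omega + \omega \partial \equiv 0 \bmod U$ and the chain-map property of $f$ shows that both $\omega e$ and $\partial e$ lie in $U\cA$; for the second one uses that $\partial T_{2n}$ is either zero or a $U$-multiple of $T_{2n-1}$, depending on the sign of $b_{2n}$.

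If $e \in U\cA$, then $f$ is already an almost $\inv$-morphism and we take $\cC' = \cC$, $f' = f$. Otherwise, I would extend $\cC$ by two new symbols $a_{2n+1} = -$ and $b_{2n+2} = -m$, where $m \geq 1$ is chosen maximally so that $\partial e \in U^m \cA$ (in the edge case $\partial e = 0$, take $m = 1$). Declaring $f'(T_{2n+1}) = e$ forces the $\omega$-relation at $T_{2n}$ to become $\omega f(T_{2n}) + f'(T_{2n+1}) = e + e = 0$; and declaring $f'(T_{2n+2}) = v$, where $v$ is the unique element with $U^m v = \partial e$ (with $v = 0$ if $\partial e = 0$), makes the $\partial$-relation at $T_{2n+1}$ hold by construction. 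The $\omega$- and $\partial$-conditions on all earlier generators are unaffected, and the only new potentially nontrivial defect is $\omega v$ at the new final generator $T_{2n+2}$, which satisfies the same structural properties as $e$ did. Hence I iterate.

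The main obstacle is termination. The key observation is that the grading of the new defect satisfies
\[
\deg(\omega v) = \deg v = \deg(\partial e) + 2m = \deg e + 2m - 1 \geq \deg e + 1,
\]
so the defect grading strictly increases at each iteration. Since $\cA$ is finitely generated and free over $\ff[U]$ with $\deg U = -2$, the gradings of its nonzero elements are bounded above by the maximum grading of any $\ff[U]$-module generator. Therefore, after finitely many iterations the defect must vanish, and the resulting map $f' \co \cC' \to \cA$ is a genuine almost $\inv$-morphism from a standard complex $\cC'$ extending $\cC$. Locality is preserved throughout since $f'(T_0) = f(T_0)$, so $f'$ is local whenever $f$ is.
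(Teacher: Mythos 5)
Your proof is correct and follows essentially the same route as the paper: absorb the defect $\omega f(T_{2n})$ by appending a new pair of symbols, iterate, and terminate because the defect's grading strictly increases while the reduced, finitely generated complex $\cA$ is bounded above. The only (cosmetic) difference is that you append $(-,-m)$ with $m$ the maximal $U$-divisibility of $\partial e$, whereas the paper always appends $(-,-1)$, writing $\partial e = U\tau$ using reducedness; both choices yield the same termination argument and preserve locality via $f'(T_0)=f(T_0)$.
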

\begin{proof}
Let $\cC = \cC(a_i, b_i)$ be of length $2n$, and denote its generators by $T_i$ for $0 \leq i \leq 2n$. Consider $\omega f(T_{2n})$. If this is zero mod $U$, then $f$ is already an almost $\inv$-map. Thus, suppose that $\omega f(T_{2n})$ is nonzero mod $U$. The simplest situation will be when $\omega f(T_{2n})$ is in fact a cycle, in which case we set
\[
\cC' = \cC(a_1, b_2, \dots, a_{2n-1}, b_{2n}, -, -1).
\]
We define $f'$ to be equal to $f$ on the generators of $\cC$, and define $f'$ on the new generators $T_{2n+1}$ and $T_{2n+2}$ by $f'(T_{2n+1}) = \omega f(T_{2n})$ and $f'(T_{2n+2}) = 0$. To check that $f'$ is an almost $\inv$-map, the nontrivial cases are:
\[
\begin{cases}
\partial f'(T_{2n+1}) = \partial \omega f(T_{2n}) = 0 \\
f'(\partial T_{2n+1}) = f'(UT_{2n+2}) = 0
\end{cases}
\text{and} \ \ \
\begin{cases}
\omega f'(T_{2n+1}) = \omega^2 f(T_{2n}) \equiv 0 \\
f'(\omega T_{2n+1}) = f'(0) = 0.
\end{cases}
\]
This provides the extension in the case that $\partial \omega f(T_{2n}) = 0$.

Now suppose that $\partial \omega f(T_{2n}) \neq 0$. In this case, we (suggestively) denote $\tau_{2n+1} = \omega f(T_{2n})$. Since $\partial \tau_{2n+1} \neq 0$, the assumption that $\cA$ is reduced implies
\[
\partial \tau_{2n+1} = U \tau_{2n+2}
\]
for some cycle $\tau_{2n+2}$. Let $\cC'$ be as before, and define $f'(T_{2n+1}) = \tau_{2n+1}$ and $f'(T_{2n+2}) = \tau_{2n+2}$. A similar argument as above easily establishes that $f'$ satisfies all the conditions for being an almost $\inv$-map, except for the $\omega$-condition on the final generator $T_{2n+2}$. Indeed, we have $f'(\omega T_{2n+2}) = f'(0) = 0$, while $\omega f'(T_{2n+2}) = \omega \tau_{2n+2}$, which may be nonzero mod $U$. But this precisely means that $f'$ is a short map from $\cC'$ to $\cA$. Iterating the argument, we obtain a sequence of short maps
\[
f^{(m)}\co \cC(a_1, b_2, \dots, a_{2n-1}, b_{2n}, -, -1, -, -1, \dots, -, -1) \leadsto \cA.
\]
Let the final generator in each complex of the above sequence be denoted by $T_k$, where $k = 2n + 2m$. Note that the grading of $T_k$ increases with $m$. Since $\cC$ is bounded above, it follows that for $m$ sufficiently large, $\partial \omega f(T_k)$ must be zero. (If not, we would have $\partial \omega f(T_k) = U\tau_{k+2}$ for some nonzero $\tau_{k+2}$ in a higher grading than $T_{k}$.) Hence this process terminates and produces the desired extension. 
\end{proof}

\begin{remark}
Lemma~\ref{lem:extension} does not necessarily yield the maximal extension of a given short map (in the sense of the total order). Though this will not be used, it is perhaps instructive for the reader to think about the casework for finding the maximal extension.
\end{remark}

\begin{definition}\label{def:suffix}
Let $\cC$ and $\cC'$ be two standard or two augmented complexes of length $k$ and $k'$, respectively. We say that $\cC$ and $\cC'$ \textit{share a suffix} if there exist indices $0 \leq p \leq k$ and $0 \leq q \leq k'$ such that:
\begin{enumerate}
\item $p$ and $q$ are of the same parity; and, 
\item The generators and arrows of $\cC$ to the right of $T_p$ (inclusive) are isomorphic to the generators and arrows of $\cC'$ to the right of $T_q'$ (inclusive).
\end{enumerate}
See Figure~\ref{fig:suffix} for an illustration. Note that we require the isomorphism in (2) to be grading-preserving. In general, when discussing two complexes that share a suffix, we will assume that the suffix is maximal (that is, $p$ and $q$ are the smallest possible indices for which the above conditions hold).
\end{definition}

\begin{figure}[h!]
\center
\includegraphics[scale=0.8]{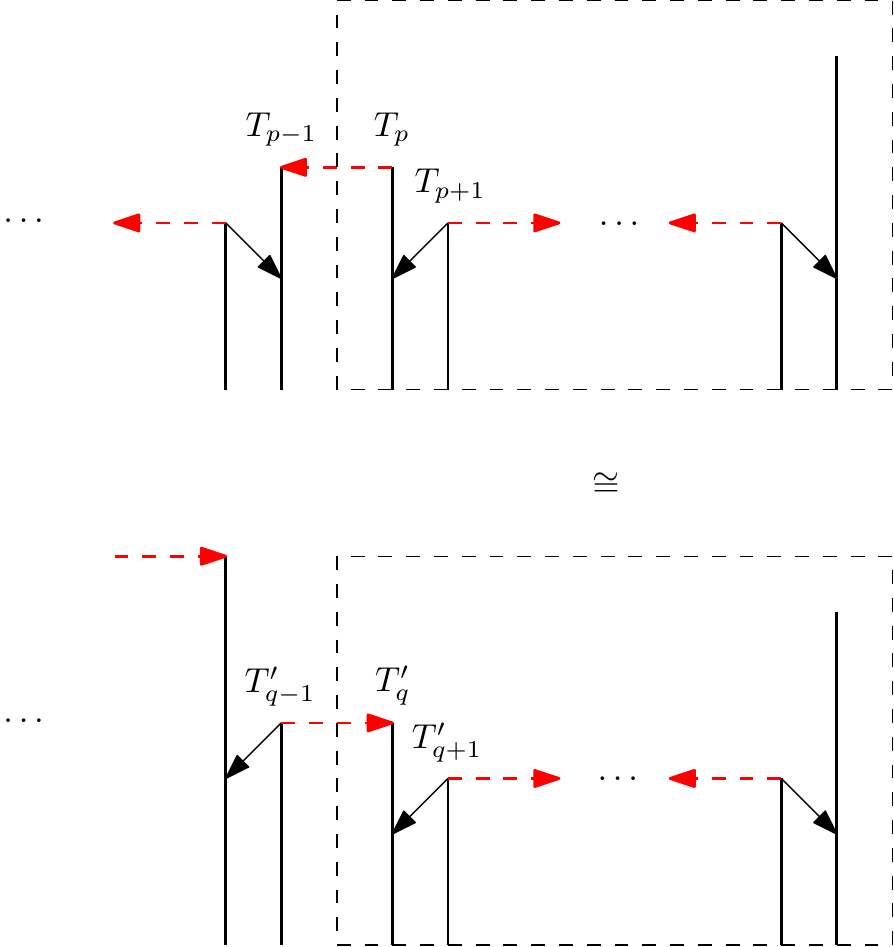}
\caption{Two complexes which share a suffix.}
\label{fig:suffix}
\end{figure}

It is clear that Definition~\ref{def:suffix} is equivalent to requiring that (a) the final generators of $\cC$ and $\cC'$ lie in the same grading; and (b) the symbol sequences $(t_i)$ and $(t_i')$ share a suffix in the lexicographic sense, starting at the indices $p+1$ and $q+1$, respectively. We stress that $p$ and $q$ are allowed to be zero and/or the final indices of their respective complexes. In the former situation, this means that one complex appears entirely at the tail-end of the other; while in the latter, the definition reduces to the requirement that the final generators of $\cC$ and $\cC'$ lie in the same grading.

Now suppose that $\cC$ and $\cC'$ share a suffix, and assume that we have short maps $f$ and $f'$ from $\cC$ and $\cC'$ into some almost $\inv$-complex $\cA$. Given the isomorphism afforded by (2) above, it is natural to attempt to construct a \textit{merged map} $g\co \cC \leadsto \cA$, which we define to be the sum of $f$ and $f'$ on the shared suffix of $\cC$ and $\cC'$. Since the $\partial$- and $\omega$-conditions are linear, this certainly still has the desired behavior with respect to the $\partial$- and $\omega$-arrows in the shared suffix. The following lemma explains that to complete this map to a short map from $\cC$ to $\cA$, it suffices to compare the symbols $t_p$ and $t_q'$.

For convenience, we define $t_0 = t_0' = 0$. We refer to the symbol zero as having sign zero, which is distinct from being either positive or negative.

\begin{lemma}[Merge Lemma]\label{lem:merge}
Let $\cC$ and $\cC'$ be two standard or two augmented complexes which share a suffix (as above), and let $\mathcal{A}$ be any almost $\inv$-complex. Suppose that we have short maps $f\co \cC \leadsto \mathcal{A}$ and $f'\co \cC' \leadsto \mathcal{A}$. If $t_p > t_q'$, then we may define a new short map $g \co \cC \leadsto \mathcal{A}$ with the properties
\[
g(T_i) = 
\begin{cases}
f(T_i) &\text{ if } i \leq p - 2 \\
f(T_i) + f'(T_{i - p + q}') &\text{ if } i \geq p.
\end{cases}
\]
We define $g$ on $T_{p-1}$ according to the rule:
\[
g(T_{p-1}) = 
\begin{cases}
f(T_{p-1}) &\text{ if } t_p \text{ and } t_q' \text{ have different signs } \\
f(T_{p-1}) + U^{|t_p - t_q'|}f'(T_{q-1}') &\text{ if } t_p \text{ and } t_q' \text{ have the same sign }
\end{cases}
\]
If $f$ and $f'$ are genuine almost $\inv$-maps, then so is $g$. Moreover, if $f$ and $f'$ are local, then $g$ is local.
\end{lemma}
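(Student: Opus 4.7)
The plan is to verify that $g$ satisfies the short-map conditions by checking the $\partial$- and $\omega$-conditions at each generator $T_i$ of $\cC$, with the verification stratified by the position of $T_i$ relative to the boundary index $p$. For $i \leq p-2$, every arrow at $T_i$ lands on a generator where $g$ either equals $f$ or differs from $f$ by an element of $\im U$; the $\partial$-condition holds exactly and the $\omega$-condition modulo $U$ follows from the short-map property of $f$. For $i \geq p+1$, every arrow at $T_i$ lies in the shared suffix, so by the isomorphism $T_{p+k}\leftrightarrow T'_{q+k}$ the conditions decompose by linearity into the corresponding conditions for $f$ and $f'$, both of which hold by hypothesis. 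The content of the lemma is therefore concentrated on the single boundary arrow $t_p$, connecting $T_{p-1}$ in the non-shared part of $\cC$ to $T_p$ in the shared suffix.

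Before the case analysis, I would record two preliminary observations. First, a direct grading computation, using that a $\partial$-arrow labelled $b$ shifts grading by $2|b|-1$, yields $\gr(T_{p-1}) = \gr(T'_{q-1}) - 2|t_p - t'_q|$ in the same-sign case, so that $U^{|t_p - t'_q|} f'(T'_{q-1})$ has the correct grading to be added to $f(T_{p-1})$. Second, when $p$ is odd, ``same sign'' forces $t_p = t'_q$ and hence contradicts strict inequality; so the same-sign subcase occurs only when $p$ is even.

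The heart of the proof is the case analysis across $t_p$. When $p$ is odd (so necessarily $t_p = +$, $t'_q = -$), the $\omega$-condition at $T_p$ expands to $\omega f(T_p) + \omega f'(T'_q) + g(T_{p-1})$; using the short-map properties of $f$ and $f'$ together with $\omega T_p = T_{p-1}$ and $\omega T'_q = 0$, this simplifies to $f(T_{p-1}) + 0 + f(T_{p-1}) \equiv 0 \bmod U$. When $p$ is even with different signs ($t_p > 0 > t'_q$), the $\partial$-condition at $T_p$ is verified analogously, using $\partial T'_q = 0$ in $\cC'$ and that $f$ is a chain map. In the same-sign even case, the extra term $U^{|t_p - t'_q|} f'(T'_{q-1})$ in $g(T_{p-1})$ is precisely calibrated: the elementary identity $\max(|b_p|, |b'_q|) = \min(|b_p|, |b'_q|) + |t_p - t'_q|$ makes the $U$-powers in $\partial g(T_p) + g(\partial T_p)$ line up so that the $f'$-contributions cancel. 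The nearby check at $T_{p-2}$ remains valid because $p$ even forces $t_{p-1}$ to be an $\omega$-symbol, eliminating any $\partial$-arrow between $T_{p-2}$ and $T_{p-1}$, while the $\omega$-contribution from the extra term lies in $\im U$ since $|t_p - t'_q| \geq 1$.

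For the final claims: $g(T_0) = f(T_0)$ whenever $p \geq 1$ (the case $p = 1$ being covered by the preliminary observation that same-sign is impossible in the odd case), and in the boundary case $p = 0$ one checks that the extra term $f'(T'_q)$ is $U$-torsion since $q \geq 1$ is forced; in either case $g(T_0)$ remains $U$-nontorsion, so $g$ is local. If $f$ and $f'$ are genuine almost $\iota$-maps, the single omitted condition at the final generator of $\cC$ lies in the shared suffix and is handled by the same linearity argument. The augmented-complex case is verified by the same structure, with the roles of $\partial$ and $\omega$ in the short-map definition swapped. The main obstacle throughout is the bookkeeping in the same-sign even case, where one must track the exact $U$-powers to see the cancellation; this reduces to the elementary grading identity recorded above.
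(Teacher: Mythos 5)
Your proof is correct and follows essentially the same route as the paper's: a direct check of the $\partial$- and $\omega$-conditions at the boundary generators $T_{p-1}$ and $T_p$, linearity on the shared suffix, and the same locality argument at $T_0$ (including the $p=0$ case, where $f'(T_q')$ is a $U$-torsion cycle). Your grading identity and the calibration $\max(|b_p|,|b_q'|)=\min(|b_p|,|b_q'|)+|t_p-t_q'|$ correctly fill in the same-sign even case that the paper leaves as an exercise; note only that in the both-negative subcase the calibration appears in the check at $T_{p-1}$ rather than at $T_p$.
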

\begin{proof}
We first consider the case where $p$ and $q$ are odd, so that $t_p = +$ and $t_q' = -$. (For example, this is the case in Figure~\ref{fig:suffix}.) Then we have defined $g(T_{p-1}) = f(T_{p-1})$. It is clear that $g$ satisfies the $\partial$-condition, since every $\partial$-arrow in $\cC$ either lies to the left of $T_{p-1}$ (in which case $g$ is equal to $f$) or to the right of $T_p$ (in which case $g = f + f'$). Thus we must verify that $g$ satisfies the $\omega$-condition. The nontrivial cases are:
\[
\begin{cases}
\omega g (T_{p-1}) = \omega f (T_{p-1}) \equiv f( \omega T_{p-1}) = f(0) = 0 \bmod U \\
g (\omega T_{p-1}) = g (0) = 0
\end{cases}
\]
and
\[
\begin{cases}
\omega g (T_{p}) =\omega f(T_p) + \omega f'(T_q') \equiv f(\omega T_p) + f' (\omega T_q') = f(T_{p-1}) \bmod U \\
g (\omega T_{p}) = g( T_{p-1} ) = f(T_{p-1}).
\end{cases}
\]

Now suppose that $p$ and $q$ are even. Consider the case when $p = 0$. Then $b_q' < 0$, and the only nontrivial verification is:
\[
\begin{cases}
\partial g (T_0) = \partial f(T_0) + \partial f'(T_q') = f(\partial T_0) + f'(\partial T_q') = 0\\
g (\partial T_0) = g (0) = 0.
\end{cases}
\]
Similarly, if $q=0$, then $b_p > 0$, and the only nontrivial verifications are:
\[
\begin{cases}
\partial g (T_{p-1}) = \partial f(T_{p-1}) = f(\partial T_{p-1}) = 0\\
g (\partial T_{p-1}) = g (0) = 0
\end{cases}
\]
and
\[
\begin{cases}
\partial g (T_{p}) = \partial f(T_p) + \partial f'(T_0') = f(\partial T_p) + f'(\partial T_0') = U^{b_p}f(T_{p-1})\\
g(\partial T_{p}) = g (U^{b_p} T_{p-1}) = U^{b_p} f(T_{p-1}).
\end{cases}
\]
The other cases are similar in flavor; we leave them as an exercise for the reader.

Finally, note that if $f$ and $f'$ are local, then $g$ is also local. Indeed, if $p \geq 2$, then we immediately have $g(T_0) = f(T_0)$ by the first equality in the definition of $g$. If $p = 1$, then $t_p = +$ and $t_q' = -$, so $g(T_0) = f(T_0)$ by the second equality. Finally, if $p = 0$, then $g(T_0) = f(T_0) + f'(T_q')$. Since $t_q' < 0$, we have that $f'(T_q')$ is a cycle in $\cA$, which is $U$-torsion since $q \neq 0$. Hence $g(T_0)$ is a $U$-nontorsion class in $\cA$.
\end{proof}


\section{Parameterization of $\mfIhat$}
\label{sec:6}
In this section, we prove that every almost $\inv$-complex is locally equivalent to a standard complex, providing an explicit parameterization of $\smash{\mfIhat}$. This claim will follow from the fact that for any almost $\inv$-complex $\cA$, the set of standard complexes less than or equal to $\cA$ has a maximal element. Indeed, given this assertion, we immediately obtain the desired theorem from the following simple lemma: 

\begin{lemma}\label{lem:intermediate}
Let $\cC$ and $\cC'$ be two standard complexes with $\cC < \cC'$. Then there is a standard complex that lies strictly between $\cC$ and $\cC'$.
\end{lemma}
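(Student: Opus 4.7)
The plan is to invoke Theorem~\ref{thm:lexico} to reduce the problem to a statement about the lexicographic order on symbol sequences, and then to construct an intermediate sequence explicitly by a short extension. Write $(t_i)$ and $(t_i')$ for the symbol sequences of $\cC$ and $\cC'$, with lengths $2m$ and $2n$, and let $k$ denote the first index at which they disagree under the trailing-zero convention used to define the lex order.

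The construction will split into two cases, based on where $k$ sits relative to $2m$. If the first disagreement already occurs within $(t_i)$ itself --- that is, $k \leq 2m$, which covers both $k \leq \min(2m, 2n)$ and the case where $(t_i')$ is a proper prefix of $(t_i)$ --- I set
\[
(s_i) = (t_1,\,t_2,\,\ldots,\,t_{2m},\,+,\,1),
\]
a valid length-$(2m+2)$ standard sequence. The $+$ appended at position $2m+1$ immediately gives $(s_i) > (t_i)$, while $(s_i)$ still agrees with $(t_i)$ through position $2m$, so its first disagreement with $(t_i')$ remains at index $k$ with the same symbol $t_k \lebang t_k'$, forcing $(s_i) < (t_i')$.

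The remaining case is that $(t_i)$ is a proper prefix of $(t_i')$, so $k = 2m+1$ and the prefix rule for the lex order forces $t_{2m+1}' = +$. Here I dually set
\[
(s_i) = (t_1',\,t_2',\,\ldots,\,t_{2n}',\,-,\,1),
\]
of length $2n+2$. The $-$ appended at position $2n+1$ gives $(s_i) < (t_i')$, while $(s_i)$ extends $(t_i)$ by the $+$ at position $2m+1$ inherited from $(t_i')$, giving $(s_i) > (t_i)$. Translating $(s_i)$ back to a standard complex via Theorem~\ref{thm:lexico} then yields the desired strictly intermediate $\cC(s_i)$.

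There is no genuine obstacle, but the case split is forced by a subtlety that motivates both constructions: the order $\lebang$ on $\Z \setminus \{0\}$ is not dense --- nothing lies strictly between $-1$ and $-2$, for instance --- so the naive attempt to interpolate a single disagreeing symbol fails in edge cases. The fix, implemented in both cases above, is to lengthen the relevant sequence rather than modify a single symbol; appending a fresh $(+,1)$ or $(-,1)$ block interacts with the lex order in a clean and predictable way and is enough to slip between any two distinct standard sequences.
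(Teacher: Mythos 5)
Your proof is correct and is essentially the paper's own argument: the same two-case split (first disagreement within $\cC$'s sequence versus $\cC$ being a proper prefix of $\cC'$), with the same construction of appending a $(+,x)$ block to $\cC$'s sequence in the first case and a $(-,x)$ block to $\cC'$'s sequence in the prefix case, the only difference being your concrete choice $x=1$ where the paper allows $x$ arbitrary. Nothing further is needed.
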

\begin{proof}
Let $\cC = \cC(a_i, b_i)$ and $\cC' = \cC(a_i', b_i')$ be of length $2m$ and $2n$, respectively. Assume that $\cC < \cC'$. If $(a_i, b_i)$ is a prefix of $(a_i', b_i')$, then any standard complex $\cC(a_1', b_2' \ldots, a_{2n-1}', b_{2n}', -, x)$ (with $x$ arbitrary) lies strictly between $\cC$ and $\cC'$. Otherwise, we may take any standard complex $\cC(a_1, b_2, \ldots, a_{2m-1}, b_{2m}, +, x)$. Note that we are using the understanding of the total order afforded by Theorem~\ref{thm:lexico}.
\end{proof}

\begin{theorem}\label{thm:parameterization}
Every almost $\iota$-complex is locally equivalent to a standard complex.
\end{theorem}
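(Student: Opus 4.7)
The plan is to reduce Theorem~\ref{thm:parameterization} to the claim that for every almost $\iota$-complex $\cA$, the totally ordered set
\[
S(\cA) = \{\cC : \cC \text{ is a standard complex with } \cC \leq \cA\}
\]
has a maximum element $\cC^*$. Granting this, if $\cC^*$ were not locally equivalent to $\cA$ then by Theorem~\ref{thm:totalorder} we would have $\cC^* < \cA$ strictly, and Lemma~\ref{lem:intermediate} would produce a standard complex $\cC^{**}$ with $\cC^* < \cC^{**} < \cA$, contradicting the maximality of $\cC^*$. Before running this argument I would first replace $\cA$ by a locally equivalent reduced complex via Lemma~\ref{lem:reduced} and check that $S(\cA)$ is nonempty: by Theorem~\ref{thm:totalorder}, either $\cC(0) \leq \cA$ (so $\cC(0) \in S(\cA)$), or $\cA \leq \cC(0)$, in which case I would instead parameterize $\cA^\vee$, using that the dual of a standard complex $\cC(a_i, b_i)$ is the standard complex $\cC(-a_i, -b_i)$.

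The substantive step is the existence of the maximum. I would prove this by showing that only finitely many symbol sequences can appear in $S(\cA)$, via two bounds. First, a \emph{length bound}: if $f \co \cC(a_i, b_i) \to \cA$ is a local map and $\cC$ has length $2n$, then $2n + 1 \leq \dim_{\F}(\cA / U \cA)$. The argument would mimic Lemma~\ref{lem:lexico2}, inductively showing that the reductions of $f(T_0), f(T_1), \ldots, f(T_{2n})$ modulo $U$ are $\F$-linearly independent by using the sequential $\omega$- and $\partial$-relations of $\cC$ to force each successive image into a previously unused direction of $\cA / U\cA$. Second, an \emph{entry bound}: each $|b_i|$ is bounded above by the largest exponent of $U$ occurring in a nonzero entry of $\partial$ on a reduced basis of $\cA$, since every relation $\partial T_{i-1} = U^{|b_i|} T_i$ (or its reverse) in $\cC$ must be transported through $f$ to a relation between images in $\cA$ of matching $U$-depth. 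Together these confine the admissible sequences to a finite set, whose lexicographic maximum supplies the required $\cC^*$.

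The main obstacle is the length bound, since the images $f(T_i)$ need not be individual basis elements of $\cA$. Ensuring that no $\F[U]$-linear combination appearing as some $f(T_i)$ collapses in a way that destroys independence modulo $U$ requires careful bookkeeping: one should first choose a preferred reduced basis of $\cA$ and then propagate the independence inductively through the casework on the signs of each $a_i$ and $b_i$ that was carried out in the proofs of Lemmas~\ref{lem:lexico2} and~\ref{lem:lexico3}. Once this independence is secured, the deduction of the theorem from the existence of $\cC^*$ is essentially formal via Lemma~\ref{lem:intermediate}.
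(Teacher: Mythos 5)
Your high-level plan---show that the set $S(\cA)$ of standard complexes $\leq \cA$ has a maximum $\cC^*$, then argue $\cC^*$ must equal $\cA$---matches the paper's strategy, but there is a small slip in the squeeze step and a fatal one in the existence argument.

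For the squeeze: Lemma~\ref{lem:intermediate} interpolates between two \emph{standard} complexes, so you cannot apply it to $\cC^* < \cA$ directly. Instead take the \emph{minimal} standard complex $\cC^{**}$ with $\cA \leq \cC^{**}$ (which exists by duality once you have the maximum below), and apply Lemma~\ref{lem:intermediate} to $\cC^* < \cC^{**}$; by the total order, any standard complex strictly between them is either $\leq \cA$ or $\geq \cA$, contradicting maximality of $\cC^*$ or minimality of $\cC^{**}$.

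The real gap is your proof that the maximum exists. Both of your bounds are false, and $S(\cA)$ is essentially always infinite. Take $\cA = \cC(0)$. By Theorem~\ref{thm:lexico}, $\cC(-, -n) < \cC(0)$ for every $n > 0$, since $- \lebang 0$ puts $(-, -n)$ before $(0, 0, \ldots)$ lexicographically. So $|b_2| = n$ is unbounded over $S(\cC(0))$, and the ``entry bound'' fails (indeed $\partial \equiv 0$ on $\cC(0)$). The unique grading-preserving local map $f \co \cC(-, -n) \to \cC(0)$ sends $T_0$ to the generator and $T_1, T_2$ to $0$ (there is nothing in the target in grading $2n - 1$, and nothing in grading $0$ inside $\im U$), so $f(T_0), f(T_1), f(T_2)$ collapse mod $U$; the proposed independence is false and $\dim_\F (\cC(0)/U) = 1 < 3$. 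Moreover $\cC(-, -1, -, -1, \ldots, -, -1) < \cC(0)$ for any length, so the ``length bound'' also fails. The structural reason is that arrows of $\cC$ pointing forward (negative $a_i$ or $b_i$) need not be faithfully realized in $\cA$: their heads may simply be sent to $0$.

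The paper instead constructs the maximum greedily, one symbol at a time (Definition~\ref{def:auxiliaryinvariants}). It must then prove that each coordinatewise $\lebang$-maximum $s_i(\cA)$ is attained (Theorem~\ref{thm:welldefined}), which is delicate precisely because negative $b_i$'s can diverge to $-\infty$; this is handled by Lemmas~\ref{lem:pos1} and \ref{lem:pos2} using the extension and merge lemmas. Only after that does a finiteness argument appear, in Lemma~\ref{lem:stab}, and the engine there is Lemma~\ref{lem:notimU}, which gives a ``no collapse mod $U$'' statement in the spirit of yours---but only for the \emph{maximal} prefix $\cC(s_1, \ldots, s_i)$. Its proof repeatedly uses maximality of each $s_j$ to derive contradictions, so the hypothesis cannot be dropped, which is exactly what your uniform length bound would require.
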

\begin{proof}
Let $\cA$ be an almost $\inv$-complex. Then we may consider the maximal standard complex $\cC$ with $\cC \leq \cA$, which exists by Theorem~\ref{thm:maximal}. Let $\cC'$ likewise be the minimal standard complex with $\cA \leq \cC'$. (This exists by dualizing and applying Theorem~\ref{thm:maximal}.) If $\cC$ and $\cC'$ are not equal, then by Lemma~\ref{lem:intermediate} there is a standard complex lying strictly between them. This is either less than or equal to $\cA$ or greater than or equal to $\cA$, contradicting the maximality/minimality of $\cC$ and $\cC'$.
\end{proof}

\noindent
In fact, we have the following slightly stronger result:

\begin{theorem}\label{thm:splits}
Let $\cA$ be an almost $\inv$-complex. Then $\cA$ is locally equivalent to a direct sum $\cC \oplus \cA'$, where $\cC$ is the standard complex locally equivalent to $\cA$.\footnote{Here, $\cA'$ is a complex satisfying all the requirements of Definition~~\ref{def:aic}, except that the homology of $\cA'$ is $U$-torsion.}
\end{theorem}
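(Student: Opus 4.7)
The plan is to upgrade the local equivalence produced by Theorem~\ref{thm:parameterization} into an actual chain-level direct sum decomposition. First, by Lemma~\ref{lem:reduced}, we may assume $\cA$ is reduced. Let $\cC = \cC(a_i, b_i)$ be the standard complex locally equivalent to $\cA$, and let $f \co \cC \to \cA$ be a local map, with generators $T_0, \ldots, T_{2n}$ and images $\tau_i = f(T_i)$. The goal is to modify $f$ and choose a complementary set of generators $\sigma_1, \sigma_2, \ldots$ in $\cA$ so that $\{\tau_0,\ldots,\tau_{2n}\} \cup \{\sigma_j\}$ is an $\F[U]$-basis for $\cA$ with the property that both $\cC^\ast = \langle \tau_i \rangle$ and $\cA' = \langle \sigma_j \rangle$ are subcomplexes that are preserved by $\omega$ modulo $U$.

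The first step is to arrange that $\cC^\ast$ is a genuine subcomplex of $\cA$ isomorphic to $\cC$ as an almost $\iota$-complex. Working inductively up the standard complex, I would show that each $\tau_i$ can be chosen so that $\partial \tau_i$ and $\omega \tau_i$ (mod $U$) take exactly the values prescribed by the standard pattern of $\cC$, hitting only generators with smaller index. The $U$-nontorsion generator $\tau_0$ is handled first; its existence is simply the statement that $f$ is local. For $i \geq 1$, the maximality provided by Theorem~\ref{thm:maximal} guarantees the required lifts exist in $\cA$, after possibly modifying by boundaries. Linear independence of the $\tau_i$ follows from the fact that their images in $\cC$ are independent and $f$ restricted to $\cC^\ast$ is an isomorphism of almost $\iota$-complexes.

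Next, I extend the $\tau_i$ to an $\F[U]$-basis of $\cA$ by choosing additional homogeneous elements $\sigma_j$. The main work is to adjust the $\sigma_j$ so that the span $\cA' = \langle \sigma_j \rangle$ is closed under both $\partial$ and $\omega \bmod U$. For $\partial$, this is standard: using that $\F[U]$ is a PID together with the fact that $\cC^\ast$ is already a subcomplex, we can choose the $\sigma_j$ in staircase form, so that $\partial\sigma_j$ lies in $\cA'$. For $\omega$, we need to absorb any component of $\omega \sigma_j$ that lies in $\cC^\ast$. If $\omega \sigma_j$ contains $\tau_i$ modulo $U$, then by inspecting the standard complex structure of $\cC$, we can find a generator $\tau_{k}$ of $\cC^\ast$ adjacent to $\tau_i$ (via the standard $\omega$-arrows) such that replacing $\sigma_j$ by $\sigma_j + \tau_k$ (or a $U$-multiple thereof) kills the offending term, at the cost of introducing terms of lower order in $U$; iterating eliminates all cross-terms. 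The main obstacle is verifying that these modifications can be performed consistently, without creating obstructions either in $\partial$-closedness of $\cA'$ or in the almost $\iota$-complex structure on $\cC^\ast$. The key input is that the standard complex has a rigid pattern of $\omega$-arrows, so each time a $\sigma_j$ has an unwanted $\omega$-arrow into $\cC^\ast$, there is a unique adjacent standard generator to absorb it; combined with a careful induction on $U$-order and on the staircase length of the $\sigma_j$, the modifications terminate.

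Finally, once the decomposition $\cA = \cC^\ast \oplus \cA'$ respects both $\partial$ and $\omega \bmod U$, it is immediate that $\cA'$ satisfies all the requirements of Definition~\ref{def:aic} except possibly the $U^{-1}H_\ast$ condition. But $U^{-1}H_\ast(\cA) \cong \F[U, U^{-1}]$ is one-dimensional over $\F[U, U^{-1}]$, and the class of $\tau_0 \in \cC^\ast$ already generates it. Hence $U^{-1}H_\ast(\cA') = 0$, so the homology of $\cA'$ is $U$-torsion, completing the proof.
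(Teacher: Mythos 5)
Your construction has two genuine gaps, both at exactly the points where the paper has to do real work. First, you never use the local map $g \co \cA \to \cC$ in the reverse direction, and as a result you never justify that the elements $\tau_i = f(T_i)$ are $\F[U]$-linearly independent, let alone that they span a \emph{direct summand} of $\cA$ as an $\F[U]$-module (which is what you need in order to "extend the $\tau_i$ to an $\F[U]$-basis of $\cA$"). Your stated reason --- that "$f$ restricted to $\cC^\ast$ is an isomorphism of almost $\iota$-complexes" --- is circular: $\cC^\ast$ is defined as the image of $f$, so this is precisely the injectivity/splitting claim that needs proof. Lemma~\ref{lem:notimU} only says each individual $f(T_j)$ is not in $\im U$; it does not rule out, say, an $\F[U]$-relation among the $\tau_i$ or an element $x \in \cA$ with $Ux \in \im f$ but $x \notin \im f$, either of which would destroy the basis extension. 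The paper's route to this is the chain of Lemmas~\ref{lem:othersupp}--\ref{lem:selflocalmapisomorphism}: the composite $g \circ f$ is a local self-map of the standard complex $\cC$, hence an isomorphism, so $f$ admits a left inverse and the short exact sequence $0 \to \cC \to \cA \to \cA/\Im f \to 0$ splits. That three-line argument replaces your entire basis-change program, and the work hiding behind it (the suffix/prefix comparison in Lemma~\ref{lem:othersupp}) is nontrivial and is absent from your proposal.

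Second, the $\omega$-absorption step is not correct as stated. You claim that if $\omega\sigma_j$ contains $\tau_i$ mod $U$ then "there is a unique adjacent standard generator to absorb it," but not every $\tau_i$ is the target of an $\omega$-arrow inside $\cC$: the final generator $T_{2n}$ is never an $\omega$-target, and $T_0$ is not one when $a_1 = -$ (more generally $T_{i-1}$ when $a_i = -$). In those cases there is no $\tau_k$ with $\omega\tau_k \equiv \tau_i$ to add to $\sigma_j$, and ruling out such cross-terms is genuinely where maximality of $\cC$ must enter --- compare Lemma~\ref{lem:omegamapnonzero}, which uses the extension lemma to show $f(T_{2n})$ cannot lie in $\im \omega$ mod $U$; even that lemma controls only the $\omega$-homology class, not arbitrary cross-terms of the form $\tau_{2n} + (\text{terms in } \cA')$. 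You invoke Theorem~\ref{thm:maximal} only for your first step, where it is not needed (the local map $f$ already produces the exact $\partial$-pattern and the mod-$U$ $\omega$-pattern), and not here, where it is. Finally, each substitution $\sigma_j \mapsto \sigma_j + \tau_k$ changes $\partial\sigma_j$ by $\partial\tau_k \in \cC^\ast$, so it can break $\partial$-closedness of $\cA'$; you flag this but give no mechanism or induction that controls it. As written, the proposal asserts the conclusion of a delicate simultaneous change of basis without supplying the argument that makes it terminate, whereas the paper's proof sidesteps the entire issue via the isomorphism $g \circ f$.
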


\noindent
Although Theorem~\ref{thm:splits} will not be used in the rest of this paper, we include it here for completeness. The proof of Theorem~\ref{thm:splits} will be given at end of the section.

The majority of this section is thus devoted to establishing the fact that the set of standard complexes $\cC \leq \cA$ has a maximal element. Instead of proving the desired theorem directly, it will be helpful for us to formulate a sequential version of the claim by introducing a set of auxiliary invariants, roughly analogous to the invariants $(a_i)$ considered by the second author in \cite{Hominfiniterank}. We begin with a slight notational modification to the definition of a standard complex. Let $(a_i, b_i)$ be a sequence of symbols as in Definition~\ref{def:stdcomplexes}, but with an even number of trailing zeros. We define the standard complex associated to such a sequence by disregarding the trailing zeros, so that 
\[
\cC(a_1, b_2, \ldots, a_{2n-1}, b_{2n}, 0, 0, \ldots, 0, 0) = \cC(a_1, b_2, \ldots, a_{2n-1}, b_{2n}).
\]
Similarly, we define the $i$th symbol of $\cC(a_1, b_2, \ldots, a_{2n-1}, b_{2n})$ to be zero for all $i > 2n$. Note that this is the only way for a complex to have a zero symbol.

\begin{definition}\label{def:auxiliaryinvariants}
Let $\cA$ be an almost $\inv$-complex. We inductively define a sequence of invariants $s_i(\cA) \in \Zbang$, as follows. Let
\[
s_1(\cA) = \max \phantom{}^{!} \{ t_1 \in \Zbang \mid \cC(t_1, \ldots, t_k) \leq \cA \}.
\]
That is, let $\mathfrak{S}_1$ be the set of standard complexes (of any length) which are less than or equal to $\cA$, and take the maximum (with respect to the order on $\Zbang$) over the set of first symbols appearing in $\mathfrak{S}_1$. Note that if the identity is less than or equal to $\cA$, then this set of symbols contains the element zero. 

Now suppose that $s_1 = s_1(\cA), \dots, s_i = s_i(\cA)$ have all been defined. Then we set
\[
s_{i+1}(\cA) = \max \phantom{}^{!} \{ t_{i+1} \in \Zbang \mid \cC(s_1, \ldots, s_i, t_{i+1}, \ldots, t_k) \leq \cA \}.
\]
That is, let $\mathfrak{S}_{i+1}$ be the set of standard complexes less than or equal to $\cA$, whose first $i$ symbols agree with the previously defined invariants $s_1(\cA), \dots, s_i(\cA)$. Then $s_{i+1}(\cA)$ is the maximum over the set of $(i + 1)$st symbols appearing in $\mathfrak{S}_{i+1}$.
\end{definition}

We stress that in the above definition, only standard complexes appear as elements of $\mathfrak{S}_i$, and not augmented complexes. Note that for $i$ odd, $s_i \in \{ -1, 0, +1\}$ while for $i$ even, $s_i \in \Zbang$.

\begin{remark}
It is not immediately clear that the $s_i(\cA)$ are well-defined. In each case, we must prove that the set of $i$th symbols appearing in $\mathfrak{S}_i$ has a maximal element. Note, however, that if $s_i(\cA) = 0$ for some $i$, then $s_j(\cA)$ is defined and equal to zero for all further $j \geq i$. 
\end{remark}

Presently, we will show that the $s_i(\cA)$ compute successive symbols in the desired maximal standard complex $\cC \leq \cA$. Roughly speaking, there are two ways that a family of complexes $\cC_n \leq \cA$ might fail to attain its supremum. Firstly, the sequence of symbols associated to a fixed coordinate might diverge in $\Zbang$; and secondly, the family $\{\cC_n\}$ might consist of complexes of successively greater and greater length. We deal with the first kind of divergence by establishing that the $s_i(\cA)$ are well-defined, while in Lemma~\ref{lem:stab}, we show that $s_i(\cA) = 0 $ for $i$ sufficiently large. Given these facts, it is not hard to check that the maximal standard complex $\cC \leq \cA$ exists and has symbol sequence given precisely by the $s_i(\cA)$.

\begin{lemma}\label{lem:s1}
The invariant $s_1(\cA)$ is well-defined.
\end{lemma}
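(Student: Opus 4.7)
The plan is to show that the set of values $t_1$ for which some standard complex $\cC(t_1, t_2, \ldots, t_k) \leq \cA$ is both nonempty and admits a $\leqbang$-maximum. My first observation is that the first symbol of any standard complex is the entry $a_1 \in \{-1, +1\}$, except for the degenerate case $\cC = \cC(0)$, where the trailing-zero convention forces $t_1 = 0$. Hence the set in question is a subset of the finite three-element set $\{-1, 0, +1\}$, and any nonempty subset of this set clearly has a maximum under $\leqbang$. The only real content is therefore the nonemptiness claim.

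To exhibit at least one standard complex $\cC \leq \cA$, I would first replace $\cA$ by a reduced representative using Lemma~\ref{lem:reduced}, then choose a $U$-nontorsion cycle $x_0 \in \cA$ in grading zero. Such an $x_0$ exists because $U^{-1}H_*(\cA) \cong \F[U, U^{-1}]$ and because Remark~\ref{rem:gradingconventiontwo} places the top $U$-nontorsion class in grading zero. The assignment $T_0 \mapsto x_0$ then defines a local short map $f \co \cC(0) \leadsto \cA$ in the sense of Definition~\ref{def:shortmaps}: since $\cC(0)$ has length zero, only the $\partial$-compatibility condition must be checked, and that holds because $x_0$ is a cycle. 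Applying the Extension Lemma (Lemma~\ref{lem:extension}) now produces a standard complex $\cC'$ extending $\cC(0)$ together with a genuine local almost $\iota$-morphism $\cC' \to \cA$, witnessing $\cC' \leq \cA$. Reading off the extension procedure, each appended block has the form $(-,-1)$, so the first symbol of $\cC'$ is either $0$ (when the process terminates immediately, i.e.\ when $\omega x_0 \equiv 0 \bmod U$) or $-1$. In either case, the set from the definition of $s_1(\cA)$ is nonempty.

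Combining these two points, the set in question is nonempty and contained in $\{-1, 0, +1\}$, so its $\leqbang$-maximum exists; this maximum is $s_1(\cA)$, and hence $s_1(\cA)$ is well-defined. The one step worth scrutinizing is the appeal to the Extension Lemma, but its hypotheses (a short map out of a standard complex into an almost $\iota$-complex) are transparently met here, so no genuine obstacle arises.
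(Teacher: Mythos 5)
Your proof is correct and follows essentially the same route as the paper: exhibit a local short map $\cC(0) \leadsto \cA$ by sending $T_0$ to a $U$-nontorsion cycle, extend it to a genuine local map from a standard complex via the Extension Lemma, and note that the set of possible first symbols lies in $\{-1,0,+1\}$ so its $\leqbang$-maximum trivially exists. The extra observation about which first symbols the extension procedure can produce is harmless but not needed.
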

\begin{proof}
To see that $\mathfrak{S}_1$ is nonempty, take any local short map from $\cC(0)$ into $\cA$. (This exists by sending the generator of $\cC(0)$ to any generator of the $U$-nontorsion tower of $\cA$.) By the extension lemma, this extends to a standard complex $\cC \leq \cA$. The fact that the set of first symbols appearing in $\mathfrak{S}_1$ has a maximal element is trivially true, since each $t_1 \in \{-1, 0, + 1\}$.
\end{proof}

We now prove that the $s_i(\cA)$ are well-defined. We proceed by induction on $i$. It is clear that if $s_i(\cA)$ is defined, then $\mathfrak{S}_{i+1}$ is nonempty. Hence our attention will be focused on proving that the set of $(i+1)$st symbols appearing in $\mathfrak{S}_{i+1}$ has a maximal element. This is trivial if $s_i(\cA) = 0$, so we will assume throughout that $\cA$ is an almost $\inv$-complex for which the invariants $s_1(\cA), \dots, s_i(\cA)$ are all defined and nonzero. For convenience, write $s_j = s_j(\cA)$ for all $1 \leq j \leq i$.

We begin with the following technical lemma:

\begin{lemma}\label{lem:notimU}
Let $s_j = s_j(\cA)$ for all $1 \leq j \leq i$. Suppose that we have a local short map from a standard or an augmented standard complex
\[
f \co \cC(s_1, \ldots, s_i) \leadsto \cA.
\]
Then $f(T_j)$ is not in $\im U$ for any $0 \leq j \leq i$. Note that $0$ is considered to be an element of $\im U$.
\end{lemma}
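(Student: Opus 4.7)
The plan is to induct on $j$. For the base case $j=0$, locality of $f$ forces $[f(T_0)]$ to represent a $U$-nontorsion class in $H_*(\cA)$ of grading zero. If we had $f(T_0)=Uv$, then $[v]$ would be a $U$-nontorsion class of grading two, contradicting the normalization convention (Remark~\ref{rem:gradingconventiontwo}) that the top $U$-nontorsion class has grading zero.

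For the inductive step, assume $f(T_k)\notin\im U$ for all $k<j$ and suppose for contradiction that $f(T_j)=Ug$ for some $g\in\cA$. In each case below (depending on the parity of $j$ and the sign of $s_j$), the strategy is either to deduce $f(T_{j-1})\in\im U$ directly (contradicting the inductive hypothesis) or to construct a local short map $f'\co \cC(s_1,\ldots,s_{j-1},s_j')\leadsto\cA$ for some symbol $s_j'$ with $s_j' \gebang s_j$, and then apply Lemma~\ref{lem:extension} to extend $f'$ to an almost $\iota$-map from a standard complex $\cC'\leq\cA$ whose $j$th symbol is $s_j'$, contradicting the maximality of $s_j$.
\begin{itemize}
\item If $j$ is odd and $s_j=+1$, then $\omega T_j=T_{j-1}$ gives $f(T_{j-1})\equiv \omega f(T_j)=U\omega g\equiv 0\bmod U$, so $f(T_{j-1})\in\im U$.
\item If $j$ is odd and $s_j=-1$, then $\omega T_{j-1}=T_j$ in the original complex yields $\omega f(T_{j-1})\equiv f(T_j)\equiv 0\bmod U$. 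The truncation $\cC(s_1,\ldots,s_{j-1})$ has $\omega T_{j-1}=0$ (since no $a$-symbol in the truncation acts on $T_{j-1}$), so the restriction of $f$ is a genuine almost $\iota$-map into $\cA$; hence $\cC(s_1,\ldots,s_{j-1})\leq\cA$, and the $j$th symbol zero is larger than $-1$ in $\gebang$.
\item If $j$ is even and $s_j>0$, then $\partial T_j=U^{s_j} T_{j-1}$ gives $\partial g=U^{s_j-1}f(T_{j-1})$. If $s_j=1$, then $f(T_{j-1})=\partial g\in \im U$ by reducedness of $\cA$; otherwise, set $f'(T_j')=g$ to obtain the required short map from $\cC(s_1,\ldots,s_{j-1},s_j-1)$.
\item If $j$ is even and $s_j<0$, then $\partial T_{j-1}=U^{|s_j|}T_j$ gives $\partial f(T_{j-1})=U^{|s_j|+1}g$, and the map $f'(T_j')=g$ gives the required short map from $\cC(s_1,\ldots,s_{j-1},s_j-1)$ (where $s_j-1$ is more negative, hence larger in $\gebang$).
\end{itemize}

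The main obstacle is the subtle boundary case $s_j=1$ in the third bullet, where no symbol with smaller positive magnitude is available, so we cannot construct a better complex directly; this is resolved by exploiting the reducedness of $\cA$ to reduce instead to the inductive hypothesis. A secondary delicate point is the almost $\iota$-map verification in the second bullet, which requires the observation that the only $a$-symbol in the original complex acting on $T_{j-1}$ lies at index $j$ and is therefore discarded upon truncation. The remaining verifications (that the constructed maps $f'$ really satisfy the $\partial$- and $\omega$-conditions defining a short map away from the final generator) are routine case-checking.
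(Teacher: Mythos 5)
Your proof is correct and follows essentially the same route as the paper's: the base case uses the grading normalization exactly as the paper does, and your four cases (odd index with $+$ or $-$, even index with the $b_j=1$ special case handled via reducedness, and otherwise building a short map into $\cC(s_1,\ldots,s_{j-1},s_j-1)$ and invoking the extension lemma against maximality of $s_j$) reproduce the paper's argument, merely phrased as induction on $j$ rather than via a minimal offending index.
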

\begin{proof}
We proceed by contradiction. Let $j$ be the minimal index for which $f(T_j) \in \im U$, and let $f(T_j) = U\eta_j$. (Note here that $\eta_j$ is allowed to be zero.) Since $\cA$ does not have any $U$-nontorsion cycles of positive grading, it is clear that $j \neq 0$. Suppose that $j$ is even. If $b_j \neq 1$, it is easy to check that we then have a local short map 
\[
f': \cC(s_1, \ldots, s_j - 1) \leadsto \cA,
\] 
defined by setting $f'(T_k) = f(T_k)$ for all $k < j$ and $f'(T_j) = \eta_j$. By the extension lemma, this extends to a local map into $\cA$, contradicting the maximality of $s_j$. In the special case that $b_j = 1$, we instead argue as follows. Since $\partial f(T_j) = f(\partial T_j) = U f(T_{j-1})$, we have $\partial \eta_j = f(T_{j-1})$. Because $\cA$ is reduced, this shows $f(T_{j-1}) \in \im U$, contradicting the minimality of $j$.

Thus, we may assume $j$ is odd. Suppose $a_j = -$. Since $f(T_j) \equiv 0 \bmod U$, it is easily checked that the restriction of $f$ then gives a local map
\[
f' \co \cC(s_1, \ldots, s_{j-1}) \rightarrow \cA.
\]
This contradicts the maximality of $s_j$. Thus, suppose $s_j = a_j = +$. Then $f(T_{j-1}) \equiv \omega f(T_j) \bmod U$, so $f(T_{j-1}) \in \im U$. This contradicts the minimality of $j$.
\end{proof}

We now prove that the set of $(i+1)$st symbols appearing in $\mathfrak{S}_{i+1}$ has a maximal element. As in Lemma~\ref{lem:s1}, this is trivial in the case that $i + 1$ is odd, so we assume that $i + 1$ is even. A quick examination of the order on $\Zbang$ shows that the only subsets of $\Zbang$ which fail to achieve their supremum are subsets of $- \N$ which are unbounded below (in the usual sense). Thus, let $(x_n)$ be a sequence of $(i+1)$st symbols appearing in $\mathfrak{S}_{i+1}$ for which $x_n \rightarrow -\infty$. It suffices to prove that this implies the existence of a $(i+1)$st symbol in $\mathfrak{S}_{i+1}$ which is positive. We establish this over the course of the next few lemmas:

\begin{lemma}\label{lem:pos1}
Let $x_n$ be a sequence of negative integers with $x_n \rightarrow - \infty$, and suppose that we have a sequence of local short maps from the family of standard complexes $\cC_n = \cC(s_1, \ldots, s_i, x_n)$ into $\cA$:
\[
f_n \co \cC(s_1, \ldots, s_i, x_n) \leadsto \cA.
\]
Then there exists a forwards local map from the augmented complex 
\[
f_{\infty} \co \cC(s_1, \ldots, s_i) \rightarrow \cA.
\]
\end{lemma}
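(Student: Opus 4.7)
The plan is to construct $f_\infty$ by stabilizing the values $f_n(T_j)$ on each generator $T_j$ with $0 \leq j \leq i$ via a pigeonhole argument, and then to check that the resulting map satisfies all the required properties. The key technical input is that $\cA$ is finitely generated and free over $\F[U]$, which supplies both the finiteness needed to stabilize and the Krull-intersection-style vanishing needed to upgrade the short-map condition to a genuine chain-map condition at the new final generator.

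First, I would observe that the grading of each $T_j$ inside $\cC_n = \cC(s_1, \ldots, s_i, x_n)$ for $0 \leq j \leq i$ is determined solely by $s_1, \ldots, s_i$, so it is independent of $n$. Since $\cA$ is a finitely generated free $\F[U]$-module, each graded piece $\cA_d$ is a finite-dimensional $\F$-vector space. A diagonal argument then lets me pass to a subsequence along which $f_n(T_j)$ is independent of $n$ for every $0 \leq j \leq i$; call the common value $f_\infty(T_j)$, and extend $\F[U]$-equivariantly to obtain a candidate map $f_\infty \co \cC(s_1, \ldots, s_i) \to \cA$.

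Most of the required properties of $f_\infty$ are inherited directly from the short-map properties of $f_n$. For $0 \leq j \leq i-1$, the relations $\partial f_\infty(T_j) = f_\infty(\partial T_j)$ and $\omega f_\infty(T_j) + f_\infty(\omega T_j) \equiv 0 \bmod U$ hold because the $\partial$- and $\omega$-arrows in $\cC_n$ involving only indices $\leq i$ agree with those in the augmented complex $\cC(s_1, \ldots, s_i)$, and $f_n$ already satisfies these relations. The $\omega$-relation at $T_i$ also follows, since the definition of a short map on the standard complex $\cC_n$ only waives the $\omega$-condition on the final generator $T_{i+1}$, not on $T_i$. Forwards locality is immediate, because $f_\infty(T_0) = f_n(T_0)$ is $U$-nontorsion in $\cA$ by hypothesis.

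The main obstacle — and the only genuinely new case — is verifying that $\partial f_\infty(T_i) = 0$, which holds in the augmented complex because $s_i = a_i$ is an $\omega$-symbol (recall $i$ is odd, as $i+1$ is even). In $\cC_n$ we have $\partial T_i = U^{|x_n|} T_{i+1}$, so
\[
\partial f_\infty(T_i) = \partial f_n(T_i) = U^{|x_n|} f_n(T_{i+1}) \in U^{|x_n|}\, \cA
\]
for every $n$ in our subsequence. Since $|x_n| \to \infty$ and $\cA$ is a finitely generated free $\F[U]$-module, the intersection $\bigcap_N U^N \cA$ is zero, forcing $\partial f_\infty(T_i) = 0$. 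This completes the verification that $f_\infty$ is a forwards local almost $\iota$-morphism from the augmented complex $\cC(s_1, \ldots, s_i)$ into $\cA$.
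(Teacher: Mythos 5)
Your proof is correct, and it reaches the conclusion by a slightly different mechanism than the paper's. The paper observes that the grading of $T_{i+1}$ in $\cC_n = \cC(s_1, \ldots, s_i, x_n)$ tends to $+\infty$ as $|x_n| \to \infty$, while $\cA$ is bounded above (being a finitely generated free $\F[U]$-module with $\deg U = -2$); hence $f_n(T_{i+1}) = 0$ once $n$ is large, and for any single such $n$ the restriction of $f_n$ to $T_0, \ldots, T_i$ is already a genuine chain map, giving $f_\infty$ in one stroke. You instead stabilize the tuple $(f_n(T_0), \ldots, f_n(T_i))$ by pigeonhole over the finite graded pieces of $\cA$ (using that $\F$ is a finite field) and then deduce $\partial f_\infty(T_i) = 0$ from the Krull-intersection property $\bigcap_N U^N \cA = 0$. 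Both arguments are ultimately fueled by finite generation of $\cA$; the paper's route is shorter because killing $f_n(T_{i+1})$ via boundedness makes the stabilization step unnecessary, and it also avoids relying on finiteness of $\F$. Your verification that the remaining $\partial$- and $\omega$-conditions and forwards locality are inherited is the same as what the paper leaves implicit.
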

\begin{proof}
Since $\cA$ is bounded above, it is clear that for $n$ sufficiently large, we must have $f_n(T_{i+1}) = 0$. For such $n$, it is easily checked that we obtain a forwards local map $f_{\infty} \co \cC(s_1, \ldots, s_i) \rightarrow \cA$ by restricting $f_n$ to the generators $T_0, \ldots, T_{i}$ of $\cC(s_1, \ldots, s_i, x_n)$.
\end{proof}

\begin{lemma}\label{lem:pos2}
Suppose that we have a forwards local map from the augmented complex $\cC(s_1, \ldots, s_i)$ into $\cA$:
\[
f_{\infty} \co \cC(s_1, \ldots, s_i) \rightarrow \cA.
\]
Then there exists a local short map
\[
f \co \cC(s_1, \ldots, s_i, x) \leadsto \cA
\]
for some $x > 0$. 
\end{lemma}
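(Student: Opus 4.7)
The plan is to construct the short map $f\co \cC(s_1,\ldots,s_i,x)\leadsto \cA$ by first producing a forwards local almost $\iota$-morphism $\tilde f\co \cC(s_1,\ldots,s_i)\to\cA$ for which $[\tilde f(T_i)]$ is a nonzero $U$-torsion class in $H_*(\cA)$. Once $\tilde f$ is in hand, I take $x\geq 1$ to be the $U$-order of $[\tilde f(T_i)]$, set $f(T_j)=\tilde f(T_j)$ for $0\leq j\leq i$, and define $f(T_{i+1})$ to be any $\eta\in\cA$ with $\partial\eta = U^x \tilde f(T_i)$. The short-map conditions at $T_0,\ldots,T_i$ are inherited from $\tilde f$, the $\partial$-condition at $T_{i+1}$ is immediate by choice of $\eta$ (recall $x>0$ means $\partial T_{i+1}=U^x T_i$), and locality follows from $\tilde f(T_0)$ being $U$-nontorsion.

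To identify $\tilde f$, first observe that $f_\infty(T_i)$ is a cycle in $\cA$ (since $\partial T_i=0$ in the augmented complex) and $f_\infty(T_i)\notin \im U$ by Lemma~\ref{lem:notimU}. Together with $\im \partial\subseteq U\cA$ in a reduced complex, this shows $[f_\infty(T_i)]\neq 0$ and that its nontorsion component cannot be $U$-divisible in $H_*(\cA)$. Decompose $[f_\infty(T_i)]=cU^m[v]+\tau$ with $c\in\{0,1\}$, $m\geq 0$, and $\tau$ torsion, and handle three cases. If $c=0$, take $\tilde f=f_\infty$ directly. If $c=1$ and $m\geq 1$, define the correction $h\co \cC(s_1,\ldots,s_i)\to\cA$ by $h(T_i)=U^m v$ and $h(T_{j<i})=0$; because $m\geq 1$, all terms of the form $U^m\omega v$ or $U^m v$ appearing in the $\omega$- and $\partial$-conditions lie in $U\cA$, so $h$ is a genuine almost $\iota$-morphism. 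Then $\tilde f:=f_\infty+h$ is still forwards local and has $[\tilde f(T_i)]=\tau$, which is nonzero torsion by Lemma~\ref{lem:notimU} applied to $\tilde f$ (else $\tilde f(T_i)\in\im\partial\subseteq\im U$).

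The main obstacle is the case $c=1$, $m=0$, in which $[f_\infty(T_i)]=[v]+\tau$ and necessarily $\gr(T_i)=0$. Here the analogous correction $h(T_i)=v$ fails since it would require $\omega v\equiv 0\bmod U$, which cannot be assumed. Instead, I invoke the Merge Lemma with the reversed complex. Since $\gr(T_i)=0$, the reversed augmented complex $\overline{\cC(s_1,\ldots,s_i)}$ is well-defined by Definition~\ref{def:reversal}, and $\overline{f_\infty}$ is forwards local because $[f_\infty(T_i)]$ has a nontorsion $[v]$-component. The inductive hypothesis that $s_1,\ldots,s_i$ are all nonzero forces $s_1=s_i=+$: at an odd-indexed position the value $0$ is always a valid symbol in $\Zbang$ (from the trivial extension to a shorter standard prefix), and $+$ is the only value in $\{-,0,+\}$ strictly greater than $0$ in $\leqbang$. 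Hence $\overline{s_1}=-s_i=-$, and I apply the Merge Lemma at the trivial shared suffix $p=q=i$ (both last generators lying in grading zero), with comparison symbols $t_p=+\gebang -=t_q'$. This produces a local almost $\iota$-morphism $g\co \cC(s_1,\ldots,s_i)\to\cA$ with $g(T_j)=f_\infty(T_j)$ for $j<i$ and $g(T_i)=f_\infty(T_i)+f_\infty(T_0)$; since both summands represent $[v]$ modulo torsion, we obtain $[g(T_i)]=[v]+\tau+[v]=\tau$, nonzero torsion by Lemma~\ref{lem:notimU} applied to $g$. Setting $\tilde f=g$ completes the argument.
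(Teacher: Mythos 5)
Your reduction to producing a forwards local map $\tilde f$ with $[\tilde f(T_i)]$ torsion is exactly the paper's strategy, and your first two cases (torsion class; nontorsion class in negative grading, corrected by adding $U^m v$) coincide with the paper's argument. The problem is your treatment of the grading-zero case. There you assert that because $s_1,\dots,s_i$ are nonzero, necessarily $s_1=s_i=+$, on the grounds that ``$0$ is always a valid symbol at an odd-indexed position.'' That supporting claim is false: having $0$ as an admissible $j$th symbol means precisely that the prefix $\cC(s_1,\ldots,s_{j-1})$ is itself $\leq \cA$, and this fails in general because appending symbols can make a standard complex lexicographically \emph{smaller} than its own prefix (appending a negative symbol does exactly this). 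Concretely, for $\cA=\cC(-,1)$ (the image of $X_1=\widehat h(\Sigma(3,5,7))$) one has $\cC(0)\not\leq\cA$ and $s_1(\cA)=-1$; the paper itself records that odd-index invariants take values in $\{-1,0,+1\}$, not $\{0,+1\}$. So nothing in your argument rules out $s_1=-$ (or $s_i=-$) in the hard case, and without $s_i=+$ and $-s_1=-$ your application of the Merge Lemma at the trivial shared suffix $p=q=i$ (which needs the comparison $t_p=+\,$ versus $t'_q=-$) cannot be made; the grading-zero case is therefore not proved.

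For comparison, the paper does not assume anything about $s_1$ or $s_i$ at this point. It shows that the reversed complex satisfies $\rC\leq\cC$ (by restricting $\overline f_\infty$ to the truncation of $\rC$, extending via the Extension Lemma, and invoking maximality of the $s_j$), notes by Remark~\ref{rmk:reversal} that the inequality is strict when $i>1$, and then merges $f_\infty$ with $\overline f_\infty$ along the maximal shared suffix beginning at the index $i+1-k$ where the two symbol sequences first differ; the condition $t_p> t'_q$ needed for the Merge Lemma is supplied by that strict lexicographic comparison, not by any sign constraint on $s_1,s_i$. The case $i=1$ is handled separately, and there the conclusion $s_1=+$ is obtained only \emph{retroactively} from the constructed map, which is consistent with the fact that it cannot be assumed in advance. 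To repair your proof you would either need to prove that the grading-zero, totally local situation genuinely forces $s_1=s_i=+$ (which you have not done and which does not follow from the definitions), or replace the merge-at-the-last-generator step by the paper's merge along the first index of disagreement between $\cC$ and $\rC$.
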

\begin{proof}
Let $\eta_i = f_{\infty}(T_i)$. By Lemma~\ref{lem:notimU}, $\eta_i$ is not a boundary (as otherwise it would lie in $\im U$). There are now two cases, depending on whether the class of $\eta_i$ is $U$-torsion in the homology of $\cA$. First, suppose that $\eta_i$ is $U$-torsion, so that
\[
U^x \eta_i = \partial \eta_{i + 1}
\]
for some $\eta_{i+1}$ and $x > 0$. Then the desired local short map $f \co \cC(s_1, \ldots, s_i, x) \leadsto \cA$ is defined by setting $f = f_{\infty}$ on $T_0, \ldots, T_i$ and sending $f(T_{i+1}) = \eta_{i+1}$.

Thus, assume that $\eta_i$ is not $U$-torsion. Then $f_{\infty}$ is a totally local map from $\cC(s_1, \ldots, s_i)$ into $\cA$. Our strategy in this case will be to show that we can replace $f_{\infty}$ with a different local map into $\cA$, for which we can apply the argument of the previous paragraph. There are two possibilities. First, suppose that the grading of $T_i$ is less than zero. Then $\eta_i$ is a $U$-nontorsion cycle in (even) grading less than zero, so if $g$ is any cycle in grading zero which generates the $U$-nontorsion tower of $H_*(\cA)$, the element
\[
\eta_i' = \eta_i + U^{|\gr(\eta_i)/2|} g
\]
is a $U$-torsion cycle. Since $\eta_i' \equiv \eta_i \bmod U$, it is easily checked that changing $f_{\infty}(T_i)$ to $\eta_i'$ yields a forwards local map to which we can apply the argument of the previous paragraph. 

The more difficult case occurs when $T_i$ has grading zero. In this situation, we use the merge lemma. Let $\cC = \cC(s_1, \ldots, s_i)$, and consider the reversed complex
\[
\rC = \cC(\overline{s}_1, \cdots, \overline{s}_i) = \cC(-s_i, \ldots, -s_1).
\]
This admits a totally local map $\overline{f}_{\infty}$ into $\cA$. The truncation of $\rC$ admits a local short map into $\cA$, which extends to a genuine local map into $\cA$ by the extension lemma.  By maximality of $\cC$, we have that $\rC\leq \cC$. Assume for the moment that $i > 1$. Then by Remark~\ref{rmk:reversal}, the inequality $\rC \leq \cC$ must be strict. Let $k$ be the minimal such index for which $s_k\neq \bar{s}_k$. This implies that the symbols of $\cC$ and $\rC$ also agree for all indices greater than $i + 1 - k$, since
\[
s_{i + 1 - j} = - \overline{s}_j = - s_j = \overline{s}_{i + 1 - j}
\]
for all $1 \leq j < k$. Moreover, 
\[
s_{i+1-k} = - \overline{s}_k > - s_k = \overline{s}_{i+1-k}.
\]
By the merge lemma, we thus conclude that there is a short map $g \co \cC \leadsto \cA$ which is equal to $f_{\infty} + \overline{f}_{\infty}$ on all generators $T_j$ of index $j \geq i + 1 - k$. In particular, since both $f_{\infty}$ and $\overline{f}_{\infty}$ are backwards local, $g(T_i)$ is the sum of two $U$-nontorsion cycles and is hence a $U$-torsion cycle. Since $k \leq i - 1$, we also have $g(T_0) = f(T_0)$, showing $g$ is forwards local. We can now apply the argument in the first paragraph to $g$.

In the special case that $i = 1$, we argue as follows. Since $\cC$ is either $(+)$ or $(-)$, after taking the reversal, we have totally local maps from both $(+)$ and $(-)$ into $\cA$. Applying the merge lemma, we obtain a forwards local map from $(+)$ into $\cA$ which takes $T_1$ to a $U$-torsion cycle. Applying the argument of the first paragraph, we thus have a local short map from $(+, t)$ into $\cA$ for some $t > 0$. Since this extends to a local map into $\cA$, we (retroactively) have $s_1 = +$. This completes the proof. 
\end{proof}

Putting everything together, we finally have:

\begin{theorem}\label{thm:welldefined}
The invariants $s_i(\cA)$ are well-defined. 
\end{theorem}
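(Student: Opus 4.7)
The plan is to proceed by induction on $i$, with the base case $i=1$ furnished by Lemma~\ref{lem:s1}. For the inductive step, I would assume $s_1(\cA),\dots,s_i(\cA)$ have all been defined; I would first note that $\mathfrak{S}_{i+1}$ is automatically nonempty, since any standard complex realizing the first $i$ invariants has a well-defined $(i+1)$st symbol (possibly $0$ via the trailing-zeros convention). The task is to show that the set of $(i+1)$st symbols appearing in $\mathfrak{S}_{i+1}$ attains its supremum in $\Zbang$. Two sub-cases are immediate: if $s_i(\cA)=0$ then all later symbols are forced to be $0$, and if $i+1$ is odd then the $(i+1)$st symbols lie in the finite set $\{-1,0,+1\}$, so a maximum trivially exists.

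Thus the only real work occurs when $i+1$ is even and $s_i(\cA)\neq 0$. In this setting every complex in $\mathfrak{S}_{i+1}$ has length at least $i+1$, so the $(i+1)$st symbols lie in $\Z\setminus\{0\}$, and a direct inspection of $\Zbang$ shows that any such subset attains its $\leqbang$-supremum unless it is an unbounded-below subset of $\Z_-$ (otherwise the maximum is either the smallest positive integer present or the least negative integer present). I would therefore argue by contradiction: assume there is a sequence $x_n\to -\infty$ of $(i+1)$st symbols appearing in $\mathfrak{S}_{i+1}$. Pulling back the associated local maps, and truncating each domain to length $i+1$ as needed, produces a sequence of local short maps $f_n\co \cC(s_1,\dots,s_i,x_n)\leadsto \cA$.

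Now Lemma~\ref{lem:pos1} converts this sequence into a forwards local map $f_\infty\co \cC(s_1,\dots,s_i)\to \cA$ out of the augmented complex, and Lemma~\ref{lem:pos2} further upgrades this to a local short map $\cC(s_1,\dots,s_i,x)\leadsto \cA$ for some $x>0$. The extension lemma (Lemma~\ref{lem:extension}) then promotes this to a genuine local map from some standard complex extending $\cC(s_1,\dots,s_i,x)$ into $\cA$, exhibiting an element of $\mathfrak{S}_{i+1}$ whose $(i+1)$st symbol is the positive integer $x$. This contradicts the hypothesis that all $(i+1)$st symbols were strictly negative, completing the induction. The substantive technical obstacles --- extracting a limiting forwards local map from a divergent sequence of negative symbols, and then converting it into a positive-symbol extension (particularly when the final generator sits in grading zero, where Lemma~\ref{lem:pos2} must invoke the reversed complex and the merge lemma) --- are already dispatched by the preceding lemmas, so the theorem itself reduces to the clean case analysis above.
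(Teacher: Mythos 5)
Your proposal is correct and follows essentially the same route as the paper: induction on $i$ with Lemma~\ref{lem:s1} as the base case, reduction to the case $i+1$ even with $s_i(\cA)\neq 0$, the observation that only unbounded-below sets of negative symbols could fail to attain a $\leqbang$-maximum, and then the chain Lemma~\ref{lem:pos1} $\to$ Lemma~\ref{lem:pos2} $\to$ Lemma~\ref{lem:extension} to produce a positive $(i+1)$st symbol in $\mathfrak{S}_{i+1}$, which forces the maximum to exist. Your contradiction framing is a cosmetic repackaging of the paper's direct argument, and your parity observation that the $(i+1)$st symbols are automatically nonzero in this case is a harmless (correct) addition.
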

\begin{proof}
We proceed by induction on $i$. The base case $i = 1$ is Lemma~\ref{lem:s1}. If $s_i(\cA) = 0$, then $s_{i+1}(\cA)$ is clearly defined and equal to zero. Hence we may assume that $s_1 = s_1(\cA), \dots, s_i = s_i(\cA)$ are all defined and nonzero. Let $\cC_n \in \mathfrak{S}_{i+1}$ be any family of standard complexes for which
\[
\cC_n = \cC(s_1, \dots, s_i, x_n, \dots)
\]
with $x_n \rightarrow - \infty$. Restricting each $\cC_n$ to the first $i + 1$ symbols produces a sequence of local short maps into $\cA$, satisfying the hypotheses of Lemma~\ref{lem:pos1}. By Lemma~\ref{lem:pos1} and Lemma~\ref{lem:pos2}, we then obtain a local short map 
\[
f \co \cC(s_1, \ldots, s_i, x) \leadsto \cA
\]
with $x > 0$. By the extension lemma, this extends to a local map into $\cA$. An examination of the order on $\Zbang$ then easily shows that the set of $(i+1)$st symbols in $\mathfrak{S}_{i+1}$ always contains a maximal element.
\end{proof}

We now show that the invariants $s_i(\cA)$ are eventually zero for $i$ sufficiently large.

\begin{lemma}\label{lem:stab}
Let $\cA$ be an almost $\inv$-complex. Then $s_i(\cA) = 0$ for all $i$ sufficiently large.
\end{lemma}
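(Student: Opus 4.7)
The plan is to argue by contradiction. Suppose $s_i(\cA) \neq 0$ for all $i \geq 1$. By Lemma~\ref{lem:reduced}, we may assume $\cA$ is reduced, so $\partial \equiv 0 \bmod U$ on $\cA$ and consequently the induced differential on $\cA/U$ vanishes. By the well-definedness statement (Theorem~\ref{thm:welldefined}), for each $n$ there is a genuine local map $f_n \co \cC^{(n)} \to \cA$ where $\cC^{(n)} = \cC(s_1, \ldots, s_{2n})$. Since the first $2n$ symbols of $\cC^{(n)}$ agree with $s_1, \ldots, s_{2n}$, Lemma~\ref{lem:notimU} applies and yields $f_n(T_j) \notin \im U$ for every $0 \leq j \leq 2n$. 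Thus $\bar f_n(T_j)$ is a nonzero element of $\cA/U$ in its prescribed grading.

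Next I would bound the absolute values $|s_{2i}|$. Because $\cA$ is finitely generated as an $\F[U]$-module, the quotient $\cA/U$ is a finite-dimensional graded $\F$-vector space, supported on a finite set $G \subset \Z$ lying in a bounded window $[g_{\min}, g_{\max}]$. Since $\bar f_n(T_j)$ is nonzero at grading $\gr(T_j)$, every $\gr(T_j)$ lies in $G$. The recursion $\gr(T_{2i+1}) = \gr(T_{2i})$ and $\gr(T_{2i+2}) - \gr(T_{2i}) = \pm(2|s_{2i+2}| - 1)$, together with $\gr(T_0) = 0$, then forces $|s_{2i}| \leq \tfrac{1}{2}(g_{\max} - g_{\min}) + 1$ uniformly in $i$.

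The main obstacle is bounding $n$ itself. Here the plan is to exploit the $\bar\omega$-structure on $\cA/U$. Since $\omega^2 \equiv 0 \bmod U$ in characteristic two, the induced map $\bar\omega$ on $\cA/U$ satisfies $\bar\omega^2 = 0$, so $\cA/U$ decomposes into finitely many indecomposable $\bar\omega$-blocks of dimension one or two. Each pair $(T_{2k}, T_{2k+1})$ in $\cC^{(n)}/U$ is a two-dimensional $\bar\omega$-block whose image under $\bar f_n$ is a two-dimensional $\bar\omega$-block inside $\cA/U$. Given the uniform bound on $|s_{2i}|$ from the previous step, the total data at each index $i$ --- the grading $\gr(T_{2i})$, the sign of $s_{2i+1}$, and the element $\bar f_n(T_{2i}) \in \cA/U$ --- lies in a finite set of possibilities. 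A Kőnig's-lemma-style diagonal argument over $n$ then extracts a coherent infinite sequence of images, and by pigeonhole, the pair $(\gr(T_{2i}), \bar f_n(T_{2i}))$ together with the local data of the symbols around it must repeat at some indices $i_1 < i_2$.

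The crux of the argument is to convert such a repetition into a contradiction with maximality. The plan is to form the reversed complex $\overline{\cC(s_{2i_1+1}, \ldots, s_{2i_2})}$ (cf. Definition~\ref{def:reversal}) and apply the merge lemma (Lemma~\ref{lem:merge}), modeled on the strategy used in the proof of Lemma~\ref{lem:pos2}: at the matching state, the original map on $\cC(s_1, \ldots, s_{2i_1})$ and the reversal of the map on the block between indices $i_1$ and $i_2$ can be summed to produce a short map into $\cA$ whose image at $T_{2i_1}$ becomes a $U$-torsion cycle. By the same trick as in Lemma~\ref{lem:pos2}, this forwards local augmented map extends (via the extension lemma) to a local map realizing a standard complex $\cC(s_1, \ldots, s_{2i_1}, +, x, \ldots)$ with $x > 0$, contradicting the maximality of $s_{2i_1+1}$. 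The hard part is formalizing exactly which combinatorial data must agree at the two repeated indices so that Lemma~\ref{lem:merge} applies cleanly; this is where I expect the most care to be required.
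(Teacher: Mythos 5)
Your overall strategy (contradiction, boundedness from Lemma~\ref{lem:notimU}, pigeonhole, merge lemma) is the right one, but the final step has a genuine gap, and the earlier steps are more elaborate than needed. The paper works with a \emph{single} sufficiently long local map $f \co \cC(s_1,\ldots,s_i,t_{i+1},\ldots,t_n) \to \cA$ (which exists because the $s_j$ are well-defined and nonzero up to index $i$), rather than an infinite family $f_n$; since the gradings $\gr(T_j)$ are bounded by Lemma~\ref{lem:notimU} and $\cA$ is finitely generated over $\ff[U]$, pigeonhole applied directly to the \emph{values} $f(T_j) \in \cA$ already produces $m \neq n$ of the same parity with $f(T_m) = f(T_n)$. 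Your K\H{o}nig-lemma diagonalization and your uniform bound on $|s_{2i}|$ are not needed for this --- they add complexity without buying anything.

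The more serious problem is the contradiction mechanism. You propose to form a reversed complex for the block between the repeated indices, merge (in the style of Lemma~\ref{lem:pos2}), arrange that the image of $T_{2i_1}$ becomes $U$-torsion, and then contradict the maximality of $s_{2i_1+1}$ by producing a prefix $\cC(s_1,\ldots,s_{2i_1},+,x,\ldots)$ with $x>0$. This does not close: if $s_{2i_1+1}$ is already $+$, producing such a prefix is not a contradiction of $s_{2i_1+1}$'s maximality, and whether it contradicts maximality at the next index depends on controlling $x$ relative to $s_{2i_1+2}$, which you have no handle on. You also flag yourself that you do not know exactly which combinatorial data must repeat for the merge to apply cleanly, which is precisely where the argument breaks. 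The paper's route is shorter and avoids all of this: restrict $f$ to $\cC_m = \cC(s_1,\ldots,s_m)$ and $\cC_n = \cC(s_1,\ldots,s_n)$; since $T_m$ and $T_n$ lie in the same grading, these two standard complexes \emph{share a suffix} (consisting of only the final generator), and the merge lemma --- applied in the appropriate direction --- produces a local short map $g \co \cC_{\min(m,n)} \leadsto \cA$ with $g(T_{\min(m,n)}) = f(T_m) + f(T_n) = 0$, which directly contradicts Lemma~\ref{lem:notimU}. In short, the missing idea is that the merge need not be combined with reversal or with maximality at all: the equality $f(T_m) = f(T_n)$ alone, fed into the merge lemma, kills the map at the final generator and hits the same lemma you already used for boundedness.
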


\begin{proof}
We proceed by contradiction. Fix $i$ large, and suppose that the invariants $s_1 = s_1(\cA), \ldots, s_i = s_i(\cA)$ are all nonzero. Then we have a local map from some standard complex
\[
f\co \cC(s_1, \ldots, s_i, t_{i+1}, \ldots, t_n) \rightarrow \cA.
\]
Since $\cA$ is finitely generated, Lemma~\ref{lem:notimU} implies that the gradings of the generators $T_j$ for $0 \leq j \leq i$ must lie in a bounded interval, as otherwise their images $f(T_j)$ would be in $\im U$. It follows that for $i$ sufficiently large, we must have $f(T_m) = f(T_n)$ for some $m \neq n$ with $0 \leq m, n \leq i$. Moreover, by considering only generators of the same parity, we may assume that $m \equiv n \bmod 2$.

Let $\cC_m = \cC(s_1, \ldots, s_m)$ and $\cC_n= \cC(s_1, \ldots, s_n)$. These admit local short maps into $\cA$ by restricting $f$. Moreover, since $T_m$ and $T_n$ lie in the same grading, $\cC_m$ and $\cC_n$ share a suffix. Because $m \neq n$, it is clear that we can always apply the merge lemma, either with $f = f|_{\cC_m}$ and $f' = f|_{\cC_n}$, or vice-versa. Without loss of generality, we assume the former. Then we have a local short map
\[
g\co \cC_m \leadsto \cA
\]
with $g(T_m) = f(T_m) + f(T_n) = 0$. This contradicts Lemma~\ref{lem:notimU}.
\end{proof}

We thus finally obtain the desired theorem:

\begin{theorem}\label{thm:maximal}
Let $\cA$ be any almost $\inv$-complex. Then the set of standard complexes less than or equal to $\cA$ is nonempty and has a maximal element.
\end{theorem}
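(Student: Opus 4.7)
The plan is to directly write down the maximum using the invariants $s_i(\cA)$ constructed earlier, leveraging the two major results already established: Theorem~\ref{thm:welldefined}, which guarantees that the $s_i(\cA)$ are well-defined in $\Zbang$, and Lemma~\ref{lem:stab}, which guarantees that they eventually vanish. Nonemptiness of the set $\{\cC \text{ standard} : \cC \leq \cA\}$ is already implicit in the proof of Lemma~\ref{lem:s1}: any $U$-nontorsion cycle of $\cA$ defines a local short map $\cC(0) \leadsto \cA$ (the short-map conditions being vacuous since $\cC(0)$ has length zero), and the extension lemma then produces a local map from some standard complex into $\cA$.

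For the maximum, I would let $N$ be the largest index with $s_N(\cA) \neq 0$, or set $N = 0$ if every invariant vanishes. The first thing to verify is that $N$ is even: if $N$ were odd, then the existence of a complex in $\mathfrak{S}_N$ with nonzero $N$th symbol $s_N$, combined with the fact that standard complexes have even length, would force this complex to have length at least $N+1$ and hence a nonzero $(N+1)$st symbol, giving $s_{N+1} \neq 0$ and contradicting the choice of $N$. I then define $\cC^* = \cC(s_1, \ldots, s_N)$ and verify $\cC^* \leq \cA$ by exploiting $s_{N+1} = 0$: the only way for $0$ to appear as an $(N+1)$st symbol in $\mathfrak{S}_{N+1}$ is via the trailing-zero convention applied to a complex of length exactly $N$ whose first $N$ symbols are $s_1, \ldots, s_N$, which must be $\cC^*$ itself. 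The case $N=0$ is handled identically.

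Maximality of $\cC^*$ is then a symbol-by-symbol comparison using Theorem~\ref{thm:lexico}. Given any standard complex $\cC' = \cC(t_1', \ldots, t_{2m}') \leq \cA$, one of three cases holds. First, if $\cC'$ and $\cC^*$ first differ at an index $i \leq \min(2m, N)$, then $\cC' \in \mathfrak{S}_i$ (since its first $i-1$ symbols match $s_1, \ldots, s_{i-1}$), and the maximality of $s_i$ forces $t_i' \lebang s_i$, yielding $\cC' < \cC^*$ lexicographically. Second, if $\cC'$ is a proper prefix of $\cC^*$ with $2m < N$, then $\cC'$ contributes the padded symbol $0$ to $\mathfrak{S}_{2m+1}$, whence $s_{2m+1} \geqbang 0$; since $s_{2m+1} \neq 0$ by the choice of $N$, it is strictly positive in $\Zbang$, and the prefix rule of the lexicographic order gives $\cC' < \cC^*$. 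Finally, if $\cC^*$ were a proper prefix of $\cC'$, then $t_{N+1}' \neq 0$ would force $s_{N+1} \neq 0$, contradicting the maximality of $N$.

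The substantive work has already been absorbed into the proofs of Theorem~\ref{thm:welldefined} and Lemma~\ref{lem:stab} — in particular the reversal and merge-lemma arguments underlying Lemma~\ref{lem:pos2}. The main obstacle remaining at this stage is not analytic but combinatorial: one must be careful with the interplay between the trailing-zero convention, the parity constraint on standard complexes, and the nonstandard order $\leqbang$ on $\Zbang$ (particularly the way $0$ sits between the negative and positive integers). Once these bookkeeping issues are handled as above, the theorem follows immediately.
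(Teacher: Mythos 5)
Your route is the same as the paper's: the paper's proof of Theorem~\ref{thm:maximal} consists of exactly the observation that the standard complex with symbol sequence $(s_i(\cA))$ lies in some $\mathfrak{S}_j$ (hence is $\leq \cA$) and is maximal by maximality of the $s_i$, with Theorem~\ref{thm:welldefined} and Lemma~\ref{lem:stab} doing all the real work. Your write-up is a careful elaboration of that two-sentence argument, and most of it is correct: nonemptiness, the evenness of $N$, using the fact that the maximum $s_{N+1}=0$ is achieved to conclude $\cC^* = \cC(s_1,\ldots,s_N) \leq \cA$, and the first two cases of the lexicographic comparison are all fine.

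Your final case, however, is wrong as stated. You claim that $\cC^*$ cannot be a proper prefix of a standard complex $\cC' \leq \cA$, because $t'_{N+1} \neq 0$ would force $s_{N+1} \neq 0$. That implication fails: $s_{N+1}$ is a maximum with respect to $\leqbang$, and in $\Zbang$ every negative integer is $\lebang 0$, so a complex in $\mathfrak{S}_{N+1}$ whose $(N+1)$st symbol is negative does not push the maximum away from $0$. Indeed this case genuinely occurs: $\cC' = \cC(s_1,\ldots,s_N,-,-1)$ is lexicographically less than $\cC^*$ by the prefix rule, so by Lemma~\ref{lem:lexico1} there is a local map $\cC' \to \cC^* \to \cA$; thus $\cC' \leq \cA$ and it properly extends $\cC^*$, contrary to your claimed contradiction. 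The repair is immediate and does not disturb the conclusion: split on the sign of $t'_{N+1}$. If $t'_{N+1} < 0$, the prefix rule of the lexicographic order already gives $\cC' < \cC^*$, which is all that maximality requires. If $t'_{N+1} > 0$, then $\cC' \in \mathfrak{S}_{N+1}$ gives $s_{N+1} \geqbang t'_{N+1} \gebang 0$, so $s_{N+1}$ would be positive, contradicting $s_{N+1}=0$. (The same compressed inference appears in your parity argument for $N$; there it happens to be harmless, but only because evenness of lengths prevents the symbol $0$ from appearing among the $(N+1)$st symbols of $\mathfrak{S}_{N+1}$ when $N$ is odd, so the achieved maximum is automatically nonzero --- this should be said explicitly rather than deduced from a single complex having a nonzero symbol.)
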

\begin{proof}
It is clear that the standard complex with symbol sequence given by the $s_i(\cA)$ provides the desired $\cC$. Indeed, this is less than or equal to $\cA$, since it lies in some $\mathfrak{S}_j$ for $j$ large; it is maximal by maximality of the $s_i(\cA)$. 
\end{proof}

We now turn to the proof of the more refined Theorem~\ref{thm:splits}. This will depend on the following sequence of lemmas. By Lemma~\ref{lem:lexico2}, recall that if $f$ is a local map from a standard complex $\cC$ to itself, then $f(T_i)$ is supported by $T_i$ for all $i$. Our first lemma investigates what we can say when $f(T_i)$ is supported by some generator $T_j$ with $j \neq i$:

\begin{lemma}\label{lem:othersupp}
Let $\cC = \cC(t_1, \ldots, t_n)$ be a standard complex, and let $f \co \cC \rightarrow \cC$ be a local map from $\cC$ to itself. Let $T_i$ be any generator of $\cC$, and suppose that $f(T_i)$ is supported by $T_j$ for $j \neq i$. Then the following hold:
\begin{enumerate}
\item If $i$ and $j$ are of the same parity, then we have the inequalities of sequences:
\begin{enumerate}
\item $(t_{i+1}, t_{i+2}, \ldots, t_n) \lebang (t_{j+1}, t_{j+2}, \ldots, t_n)$, and
\item $(-t_i, -t_{i-1}, \ldots, -t_1) \lebang (-t_{j}, -t_{j-1}, \ldots, -t_1)$.
\end{enumerate}
\item If $i$ and $j$ are of different parities, then we have the inequalities of sequences:
\begin{enumerate}
\item $(t_{i+1}, t_{i+2}, \ldots, t_n) \lebang (-t_j, -t_{j-1}, \ldots, -t_1)$, and
\item $(-t_i, -t_{i-1}, \ldots, -t_1) \lebang (t_{j+1}, t_{j+2}, \ldots, t_n)$.
\end{enumerate}
\end{enumerate}
In the case that $i$ or $j$ is equal to $0$ or $n$, we take the above sequences to be $(0)$ wherever necessary.
\end{lemma}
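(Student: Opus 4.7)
The plan is to use $f$ and the assumption that $f(T_i)$ is supported by $T_j$ to construct local short maps between the relevant suffix and reversed-prefix standard complexes at $T_i$ and $T_j$, and then to invoke Theorem~\ref{thm:lexico}. We describe the argument for inequality (1a); the others are analogous.

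First, grading-preservation of $f$ forces $\gr T_i = \gr T_j$, so after the normalization convention the forward-tail standard complexes $\cC^i := \cC(t_{i+1}, \ldots, t_n)$ and $\cC^j := \cC(t_{j+1}, \ldots, t_n)$ have the same starting grading. We will construct a local short map $\phi \co \cC^i \leadsto \cC^j$; by the extension lemma and Theorem~\ref{thm:lexico}, this yields $(t_{i+1}, \ldots, t_n) \lebang (t_{j+1}, \ldots, t_n)$. Observe that the $\F[U]$-span of $\{T_i, T_{i+1}, \ldots, T_n\}$ is naturally a subcomplex or a quotient of $\cC$ depending on the sign of $t_i$, and in either case is isomorphic (after a grading shift) to $\cC^i$; likewise for the $j$-analogue. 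The map $\phi$ is defined on generators by taking the component of $f(T_{i+k})$ along $\{T_j, T_{j+1}, \ldots, T_n\}$. The hypothesis that $f(T_i)$ is supported by $T_j$ forces $\phi$ to send the leading generator of $\cC^i$ to a nontorsion class in $\cC^j$, so $\phi$ is local. The compatibility of $\phi$ with $\partial$ and $\omega$ modulo $U$ then follows from the analogous compatibility of $f$, together with the arrow structure of $\cC$ around $T_i$ and $T_j$.

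For inequality (1b), we apply the above argument to the reversed complex $\overline{\cC}$ with the induced local map $\overline{f}\co \overline{\cC} \to \overline{\cC}$ (Definition~\ref{def:reversal}), noting that reversal swaps forward tails with reversed prefixes and that $\overline{f}$ sends $\overline{T}_{n-i}$ to something supported by $\overline{T}_{n-j}$. Case (2), when $i$ and $j$ have different parities, proceeds by the same scheme, but now the symbols $t_{i+1}$ and $t_{j+1}$ are of opposite type (one is an $\omega$-arrow, the other a $\partial$-arrow), so forward propagation from $T_i$ under $f$ corresponds to backward propagation from $T_j$. Consequently, the forward tail at $T_i$ maps to the reversed prefix at $T_j$, yielding (2a), and a symmetric argument gives (2b). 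The main obstacle throughout is the casework needed to verify that $\phi$ is a short map: the signs of $t_i$ and $t_j$ govern the sub- vs.\ quotient-complex structure on the $\{\geq i\}$ and $\{\geq j\}$ subspaces of $\cC$, and one must absorb any terms of $f(T_{i+k})$ supported on generators outside $\{T_j, \ldots, T_n\}$ into the ``modulo $U$'' slack afforded by $f$ being a chain map and $\omega$-intertwining only modulo $U$.
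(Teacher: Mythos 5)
Your proposal takes a genuinely different route from the paper's. The paper proves this lemma by a direct, inductive, arrow-by-arrow comparison in the spirit of Lemma~\ref{lem:lexico2}: knowing $f(T_i)$ is supported by $T_j$, one compares the symbol $t_{i+1}$ with $t_{j+1}$ (or with $-t_j$ in the mixed-parity case), deduces an inequality in $\Zbang$, and in the case of equality propagates the support hypothesis to the next pair of generators. Your plan is instead to package the statement as the existence of a local map between truncations of $\cC$ and then invoke Theorem~\ref{thm:lexico}.

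Unfortunately, the heart of your argument --- that the ``component of $f$ along $\{T_j, \ldots, T_n\}$'' map $\phi$ is a (short) almost $\iota$-morphism --- is false in general, and the failure is an exact one, not a mod-$U$ discrepancy. The span $\{T_j, \ldots, T_n\}$ is either a subcomplex (when $t_j<0$) or a quotient (when $t_j>0$), but the naive projection $\cC \to \{T_j, \ldots, T_n\}$ fails to be a chain map precisely when the $\partial$-arrow adjacent to $T_j$ points into the span; similarly the inclusion of $\{T_i,\ldots,T_n\}$ fails to be a chain map when $t_i>0$. Since both almost $\iota$-morphisms and short maps require $\partial\phi + \phi\partial = 0$ on the nose (Definitions~\ref{def:aim} and \ref{def:shortmaps}; only the $\omega$-condition is relaxed mod $U$), this cannot be absorbed into ``mod $U$ slack''. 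A concrete failure: take $\cC = \cC(-,1,-,2)$ with generators $T_0,\ldots,T_4$, and let $f(T_0)=T_0+T_1$ and $f(T_k)=T_k$ for $k\geq 1$; this is a genuine local self-map with $f(T_0)$ supported by $T_1$, so $i=0$, $j=1$. Your $\phi$ (the projection onto $\{T_0,T_1\}\cong\cC(+)$) has $\phi(T_2)=0$, yet
\[
\phi(\partial T_2) = \phi(U T_1) = U T_1 \neq 0 = \partial\phi(T_2),
\]
so $\phi$ is not a chain map. There is a secondary problem as well: when $i$ (or $j$) is odd, the sequence $(t_{i+1},\ldots,t_n)$ begins with a $b$-symbol and does not define a standard or augmented complex in the sense of Definitions~\ref{def:stdcomplexes}--\ref{def:augcomplexes}, so Theorem~\ref{thm:lexico} is not available in those cases even if $\phi$ were a chain map. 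The paper avoids both issues by working directly with the symbol comparison rather than through local maps of truncated complexes.
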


\begin{proof}
The proof is similar to that of Lemma~\ref{lem:lexico2}. We first consider the situation when $i$ and $j$ are of the same parity. We prove inequality (a). Suppose that $i$ and $j$ are odd. Then:
\begin{enumerate}
\item If $t_{i+1} < 0$, then $\partial f(T_i) =  U^{|t_{i+1}|} f(T_{i+1})$. Since $f(T_i)$ is supported by $T_j$, this implies $\partial T_j \in \im U^{|t_{i+1}|}$. (Here, we use the structure of $\partial$ on $\cC$.) It follows that $t_{i+1} \leqbang t_{j+1}$.
\item If $t_{i+1} > 0$, then $\partial f(T_{i+1}) = U^{t_{i+1}}f(T_i)$. Since $f(T_i)$ is supported by $T_j$, this implies $U^{t_{i+1}} T_j \in \im \partial$. (Here, we use the structure of $\partial$ on $\cC$.) It follows that $t_{i+1} \leqbang t_{j+1}$. 
\end{enumerate}
If strict inequality holds in either of the above cases, then the desired inequality is established. Otherwise, it is easily checked that $f(T_{i+1})$ is supported by $T_{j+1}$. We may thus proceed by induction and consider the case when $i$ and $j$ are even. Then:
\begin{enumerate}
\item If $t_{i+1} < 0 $, then trivially $t_{i+1} \leqbang t_{j+1}$, since $t_{i+1} = -$.
\item If $t_{i+1} = 0$ (that is, $i = n$), then $\omega f(T_i) \equiv 0 \bmod U$. Since $f(T_i)$ is supported by $T_j$, this implies that $\omega T_j = 0$. (Here, we use the structure of $\omega$ on $\cC$.) Hence $t_{j+1} > 0$.
\item If $t_{i+1} > 0$, then $\omega f(T_{i+1}) \equiv f(T_i)$. Since $f(T_i)$ is supported by $T_j$, this implies $T_j \in \im \omega$. (Here, we use the structure of $\omega$ on $\cC$.) It follows that $t_{j+1} = t_{i+1} = +$. 
\end{enumerate}
Again, if strict inequality holds, then the desired inequality is established; otherwise, it is easily checked that $f(T_{i+1})$ is supported by $T_{j+1}$. Proceeding by induction establishes inequality (a). (Note that the two relevant sequences are of different lengths, since $j \neq i$. Thus they cannot be equal.) The proof of inequality (b) proceeds analogously, by inducting on decreasing index.\footnote{One can also think of this as considering the reversal of $\cC$.} More precisely, suppose that $i$ and $j$ are odd. Then:
\begin{enumerate}
\item If $t_i < 0$, then $\omega f(T_{i-1}) \equiv f(T_i) \bmod U$. Since $f(T_i)$ is supported by $T_j$, this implies $T_j \in \im \omega$. Hence $t_j = t_i = -$. 
\item If $t_i > 0$, then trivially $- t_i \leqbang -t_j$, since $-t_i = -$.
\end{enumerate} 
If $i$ and $j$ are even, we have:
\begin{enumerate}
\item If $t_{i} < 0$, then $\partial f(T_{i-1}) =  U^{|t_{i}|} f(T_{i})$. Since $f(T_i)$ is supported by $T_j$, this implies $U^{|t_{i}|} T_j \in \im \partial$. It follows that $t_{i} \geqbang t_{j}$, so $- t_i \leqbang - t_j$.
\item If $t_i = 0$ (that is, $i = 0$), then $\partial f(T_i) = 0$. Since $f(T_i)$ is supported by $T_j$, this implies $\partial T_j = 0$. Hence $t_j < 0$, so $-t_j > 0$.
\item If $t_{i} > 0$, then $\partial f(T_{i}) = U^{t_{i}}f(T_{i-1})$. Since $f(T_i)$ is supported by $T_j$, this implies $\partial T_j \in \im U^{t_i}$. It follows that $t_{i} \geqbang t_{j}$, so $-t_i \leqbang - t_j$.
\end{enumerate}
We thus proceed as before, except that the inductive step lowers the values of the indices $i$ and $j$ by one.

Now suppose that $i$ and $j$ are of different parities. Our strategy is again to compare the $\partial$-arrow adjacent to $T_i$ with the $\partial$-arrow adjacent to $T_j$. Now, however, one of these arrows lies to the right of its associated generator, while the other lies to the left. (Similarly for the $\omega$-arrows.) The analysis is similar to the previous case, and we leave the details to the reader. Note that since $n$ is even, a parity argument shows that the relevant symbol sequences have different lengths, and hence cannot be equal.
\end{proof}

\begin{lemma}\label{lem:selflocalmapinjective}
Let $\cC = \cC(t_1, \ldots, t_n)$ be a standard complex, and let $f$ be a local map from $\cC$ to itself. Then $f$ is injective.
\end{lemma}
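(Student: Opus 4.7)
The plan is to show injectivity of $f$ by analyzing its reduction modulo $U$. Because $\cC$ is free over $\F[U]$ and $f$ is $\F[U]$-linear, it suffices to prove that the induced map $\bar f \co \cC/U\cC \to \cC/U\cC$ is injective. Indeed, if $f(x)=0$ for some nonzero $x$, then writing $x = U^k x'$ with $x' \notin U\cC$ gives $f(x')=0$ by $U$-torsion-freeness, whence $\bar f(\bar x') = 0$ and $\bar x' = 0$, contradicting $x' \notin U\cC$. Since $f$ preserves grading, $\bar f$ splits as a direct sum of its actions $\bar f_d$ on each graded piece of $\cC/U\cC$, whose basis consists of those $T_i$ with $\gr T_i = d$; it therefore suffices to show that each $\bar f_d$ is injective.

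By Lemma \ref{lem:lexico2}, $f(T_i)$ is supported by $T_i$, so the coefficient of $T_i$ in $f(T_i)$ -- which, being of degree $0$ in $\F[U]$, must lie in $\F$ -- equals $1$. Hence the matrix $M_d$ representing $\bar f_d$ has all diagonal entries equal to $1$. For indices $i \neq j$ with $\gr T_i = \gr T_j = d$, the $(i,j)$-entry of $M_d$ is nonzero precisely when $f(T_i)$ is supported by $T_j$ in the sense of Lemma \ref{lem:othersupp}, since the coefficient of $T_j$ in $f(T_i)$ would have to have $U$-valuation $0$, and the grading constraint forces such terms to exist only when $\gr T_i = \gr T_j$.

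To each index $i$ with $\gr T_i = d$, associate the invariant
\[
\Phi(i) = \begin{cases} (t_{i+1}, t_{i+2}, \ldots, t_n) & \text{if } i \text{ is even,} \\ (-t_i, -t_{i-1}, \ldots, -t_1) & \text{if } i \text{ is odd,} \end{cases}
\]
viewed as a sequence in $\Zbang$, with the convention of Lemma \ref{lem:othersupp} that empty sequences are replaced by $(0)$. A direct case check, splitting on the parities of $i$ and $j$, shows that whenever $M_d$ has a nonzero $(i,j)$-entry with $i \neq j$, Lemma \ref{lem:othersupp} yields $\Phi(i) \lebang \Phi(j)$ in the lexicographic order: the same-parity cases use (1)(a) when both are even and (1)(b) when both are odd, while the mixed-parity cases use (2)(a) or (2)(b) depending on which index is odd. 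In all four cases the inequality gotten is precisely $\Phi(i) \lebang \Phi(j)$.

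Now extend the resulting strict partial order on the grading-$d$ indices to any total order; note that if $\Phi(i) = \Phi(j)$ for some $i \neq j$, then by strictness of the inequality above both $M_d(i,j)$ and $M_d(j,i)$ vanish, so any tie-breaking is harmless. With respect to this total order, $M_d$ is upper triangular with $1$s on the diagonal, hence invertible. This gives injectivity of each $\bar f_d$, and hence of $f$. The crux of the argument is the choice of invariant $\Phi$ and the verification -- via Lemma \ref{lem:othersupp} -- that it linearizes the ``support'' relation; once this is set up, everything else is formal.
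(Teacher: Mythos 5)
Your proof is correct and follows essentially the same route as the paper: both arguments rest on Lemma~\ref{lem:lexico2} for the diagonal (self-support) terms and on Lemma~\ref{lem:othersupp} applied to exactly the same parity-dependent invariant $\Phi$. The paper packages this as a contradiction with the $\Phi$-minimal ``naked'' index in a vanishing $\F[U]$-linear combination, while you reduce mod $U$ and observe that each graded block is unitriangular with respect to the $\Phi$-order; these are the same argument in different form.
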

\begin{proof}
We proceed by contradiction. Suppose that we have a linear combination
\[
f\left( \sum_i U^{n_i} T_i \right) = 0.
\]
Without loss of generality, we may factor out powers of $U$ from the above linear combination so that at least one generator appears with no $U$-power. Let $I$ be the set of indices corresponding to such ``naked" generators. For each $i \in I$, we associate to $i$ a symbol sequence as follows. If $i$ is even, we associate to $i$ the symbol sequence $(t_{i+1}, t_{i+2}, \ldots, t_n)$. If $i$ is odd, we associate to $i$ the symbol sequence $(-t_i, -t_{i-1}, \ldots, -t_1)$. Note that the sequences associated to different elements of $I$ have different lengths (possibly due to parity), and are thus distinct.

Let $j$ be the index in $I$ whose associated sequence is minimal with respect to the lexicographic order. By Lemma~\ref{lem:lexico2}, $f(T_{j})$ is supported by $T_{j}$. Since $f$ maps the above linear combination to zero, there must be some other index $i \in I$ for which $f(T_i)$ is supported by $T_{j}$. Lemma~\ref{lem:othersupp} then contradicts the minimality of $j$.
\end{proof}

\begin{lemma}\label{lem:selflocalmapisomorphism}
Let $\cC = \cC(t_1, \ldots, t_n)$ be a standard complex, and let $f$ be a local map from $\cC$ to itself. Then $f$ is an isomorphism.
\end{lemma}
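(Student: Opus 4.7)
The plan is to deduce surjectivity (and hence bijectivity, since $f$ is $\F[U]$-linear and grading-preserving) from the injectivity of $f$ established in Lemma~\ref{lem:selflocalmapinjective}, using the simple observation that each graded piece of a standard complex is finite-dimensional over $\F$.

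First, I would recall that $\cC = \cC(t_1,\ldots,t_n)$ is a free $\F[U]$-module on the $n+1$ generators $T_0,\ldots,T_n$, each of which sits in a definite grading determined by the symbols $t_i$ (after the normalization convention that $\gr(T_0)=0$). Since $U$ acts with degree $-2$, for every integer $k$ the graded piece $\cC_k$ has $\F$-basis $\{U^{m_i}T_i : 0\leq i\leq n,\ \gr(T_i)-2m_i=k,\ m_i\geq 0\}$, which has cardinality at most $n+1$. Thus every graded piece $\cC_k$ is a finite-dimensional $\F$-vector space.

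Next, because $f$ is grading-preserving, it restricts for each $k$ to an $\F$-linear map $f_k\co \cC_k \to \cC_k$. Lemma~\ref{lem:selflocalmapinjective} tells us that $f$ is injective on all of $\cC$, so in particular each $f_k$ is an injective $\F$-linear endomorphism of the finite-dimensional space $\cC_k$. By the rank-nullity theorem, $f_k$ is therefore an isomorphism. Assembling these graded isomorphisms, $f$ is surjective on every graded piece and hence surjective as an $\F[U]$-module map. Combined with injectivity, $f$ is a bijective $\F[U]$-linear chain map, i.e.\ an isomorphism.

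There is no real obstacle here; the argument is a one-line reduction to finite-dimensional linear algebra once Lemma~\ref{lem:selflocalmapinjective} is in hand. The only subtlety worth flagging is that one must work graded piece by graded piece rather than directly with the infinite-dimensional $\F$-vector space $\cC$ (since an injective $\F$-linear endomorphism of an infinite-dimensional space need not be surjective); the finite-dimensionality of each $\cC_k$, which comes from the fact that a standard complex has only finitely many $\F[U]$-generators, is what makes the argument go through.
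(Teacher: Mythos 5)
Your argument is correct and is essentially identical to the paper's proof: both deduce surjectivity from Lemma~\ref{lem:selflocalmapinjective} by restricting $f$ to each grading, where it becomes an injective endomorphism of a finite-dimensional $\F$-vector space and hence an isomorphism. Your extra remark about why one must work grading by grading is a fine clarification but not a difference in method.
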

\begin{proof}
By Lemma~\ref{lem:selflocalmapinjective}, $f$ is injective. Restricting $f$ to each grading, we obtain a linear map from a finite-dimensional $\F$-vector space to itself, which is surjective (since it is injective).
\end{proof}
\noindent
This finally yields the proof of Theorem~\ref{thm:splits}:

\begin{proof}[Theorem~\ref{thm:splits}]
Let $\cA$ be an almost $\inv$-complex, and let $\cC$ be the standard complex in its local equivalence class. Then we have local maps 
\[ f \co \cC \to \cA \quad  \text{ and } \quad g \co \cA \to \cC. \]
Then $g \circ f$ is a local map from $\cC$ to itself, which is an isomorphism by Lemma \ref{lem:selflocalmapisomorphism}. It follows that the short exact sequence
\[ 0 \to \cC \xrightarrow{f} \cA \to \cA / \Im f \to 0 \]
splits.
\end{proof}


\section{Homomorphisms from $\mfIhat$ to $\Z$}\label{sec:7}
We have now shown that $\smash{\mfIhat}$ is in bijection with the set of standard complexes up to local equivalence. In principle, this gives an explicit identification of all of the elements of $\smash{\mfIhat}$. However, as we discuss in Section~\ref{sec:8}, we do not have a complete description of the group structure on $\smash{\mfIhat}$ in terms of the standard complex parameters. We thus settle for constructing a family of homomorphisms from $\smash{\mfIhat}$ into $\Z$, as follows. For any standard complex $\cC = \cC(a_i, b_i)$ and integer $n \geq 1$, define
\[
\phi_n(\cC) = (\# \text{ parameters } b_i = n) - (\# \text{ parameters } b_i = -n). 
\]
Thus, $\phi_n$ records the number of towers of height $n$ in $H_*(\cC)$, counted with sign. We define $\phi_n$ on all of $\smash{\mfIhat}$ by passing to the representative standard complex in each local equivalence class.

The majority of this section will be devoted to proving that the $\phi_n$ are homomorphisms. Given that we do not fully understand the group structure on $\smash{\mfIhat}$, it will be necessary to take an extremely roundabout approach to understanding the $\phi_n$. Our strategy will be to express $\phi_n$ in terms of other homomorphisms for which (fortuitously) a complete understanding of the group structure on $\smash{\mfIhat}$ is not needed.

\subsection{Paired bases and shift maps}

\begin{definition}\label{def:pairedbasis}
Let $\cA$ be an almost $\inv$-complex. A \textit{paired basis} for $\cA$ is an $\mathbb{F}[U]$-module basis consisting of (homogenous) generators $\{x, y_i, z_i\}_{i = 1}^n$, together with positive integers $\{\eta_i\}_{i = 1}^n$, such that
\begin{align*}
&\partial x = 0, \\
&\partial y_i = \smash{U^{\eta_i}} z_i, \text{ and} \\
&\partial z_i = 0.
\end{align*}
Note that $[x]$ generates the $U$-nontorsion tower of $H_*(\cC)$, while each $[z_i]$ generates a $U$-torsion tower of height $\eta_i$. Accordingly, we refer to $x$ as the \textit{$U$-nontorsion generator}, and the $z_i$ as the \textit{$U$-torsion generators}. We call the $y_i$ the \textit{non-cycle generators}.
\end{definition}

\begin{example}\label{ex:standardpaired}
If $\cC$ is a standard complex, then the $T_i$ form a paired basis for $\cC$. More precisely, let $x = T_0$ and set
\[
\begin{cases}
y_i = T_{2i-1} \\
z_i = T_{2i}
\end{cases}
\text{ or } \ \ \ \ \
\begin{cases}
y_i = T_{2i} \\
z_i = T_{2i-1},
\end{cases}
\]
depending on whether $b_{2i} < 0$ or $b_{2i} > 0$, respectively. This defines a paired basis for $\cC$ with $\eta_i = |b_{2i}|$.
\end{example}

\begin{example}\label{ex:tensorpaired}
If $\cC_1$ and $\cC_2$ are two standard complexes, then the usual tensor product basis for $\cC_1 \otimes \cC_2$ is \textit{not} a paired basis. To construct a paired basis for $\cC_1 \otimes \cC_2$, we perform the change-of-basis depicted in Figure~\ref{fig:paired}. More precisely, let $\{x, y_i, z_i; \eta_i\}_{i=1}^m$ be a paired basis for $\cC_1$, and (abusing notation) let $\{x, y_j, z_j; \eta_j\}_{j=1}^n$ be a paired basis for $\cC_2$. (It will be clear from context which generators lie in $\cC_1$ and which lie in $\cC_2$.) Then the $U$-nontorsion generator of $\cC_1 \otimes \cC_2$ is given by $x_{00}=x\otimes x$. The $U$-torsion generators fall into two types, which we denote by $z_{ij}$ for $(i, j) \neq (0,0)$ and $\zeta_{ij}$ for $i, j > 0$. These are defined by
\[
z_{ij} = 
\begin{cases}
z_i\otimes x & \text{if } j = 0 \\
x\otimes z_j & \text{if } i = 0 \\
z_i \otimes z_j & \text{else} \\
\end{cases}
\ \ \ \text{ and } \ \ \
\zeta_{ij} = 
\begin{cases}
U^{\eta_i-\eta_j}z_{i}\otimes y_j + y_{i}\otimes z_j & \text{if } \eta_i \geq \eta_j\\
z_{i}\otimes y_j +U^{\eta_j-\eta_i} y_{i}\otimes z_j & \text{if }  \eta_i<\eta_j.
\end{cases}
\]
It is easily checked that the $z_{ij}$ and $\zeta_{ij}$ are cycles. Correspondingly, there are two types of non-cycle generators, which we denote by $y_{ij}$ for $(i, j) \neq (0,0)$ and $\upsilon_{ij}$ for $i, j >0$. These are defined by
\[
y_{ij} =
\begin{cases}
y_i\otimes x &\text{if } j = 0\\
x\otimes y_j&\text{if } i = 0\\
z_i\otimes y_j &\text{if } i, j > 0 \text{ and } \eta_i\geq\eta_j\\
y_i\otimes z_j &\text{if } i, j > 0 \text{ and } \eta_i<\eta_j
\end{cases}
\ \ \ \text{ and } \ \ \
\upsilon_{ij} =y_i\otimes y_j.
\]
It is straightforward to check that
\begin{align*}
&\partial y_{ij} = U^{\min(\eta_i, \eta_j)} z_{ij} \text{ and} \\
&\partial \upsilon_{ij} =U^{\min(\eta_i, \eta_j)} \zeta_{ij}
\end{align*}
with the understanding that in the first line, $\partial y_{i0} = U^{\eta_i} z_{i0}$ and $\partial y_{0j} = U^{\eta_j} z_{0j}$. Let
\[
\eta_{ij} = 
\begin{cases}
\eta_i &\text{if } j = 0 \\
\eta_j &\text{if } i = 0 \\
\min(\eta_i, \eta_j) &\text{else}.
\end{cases}
\]
Then the collection $\{x_{00}, y_{ij}, \upsilon_{ij}, z_{ij}, \zeta_{ij}; \eta_{ij}\}$ forms a paired basis for $\cC_1 \otimes \cC_2$, where each $\eta_{ij}$ (with $i, j > 0$)  appears in both the differential $\partial y_{ij} = U^{\eta_{ij}} z_{ij}$ and the differential $\partial \upsilon_{ij} = U^{\eta_{ij}} \zeta_{ij}$.
\end{example}

\begin{figure}[h!]
\center
\includegraphics[scale=0.85]{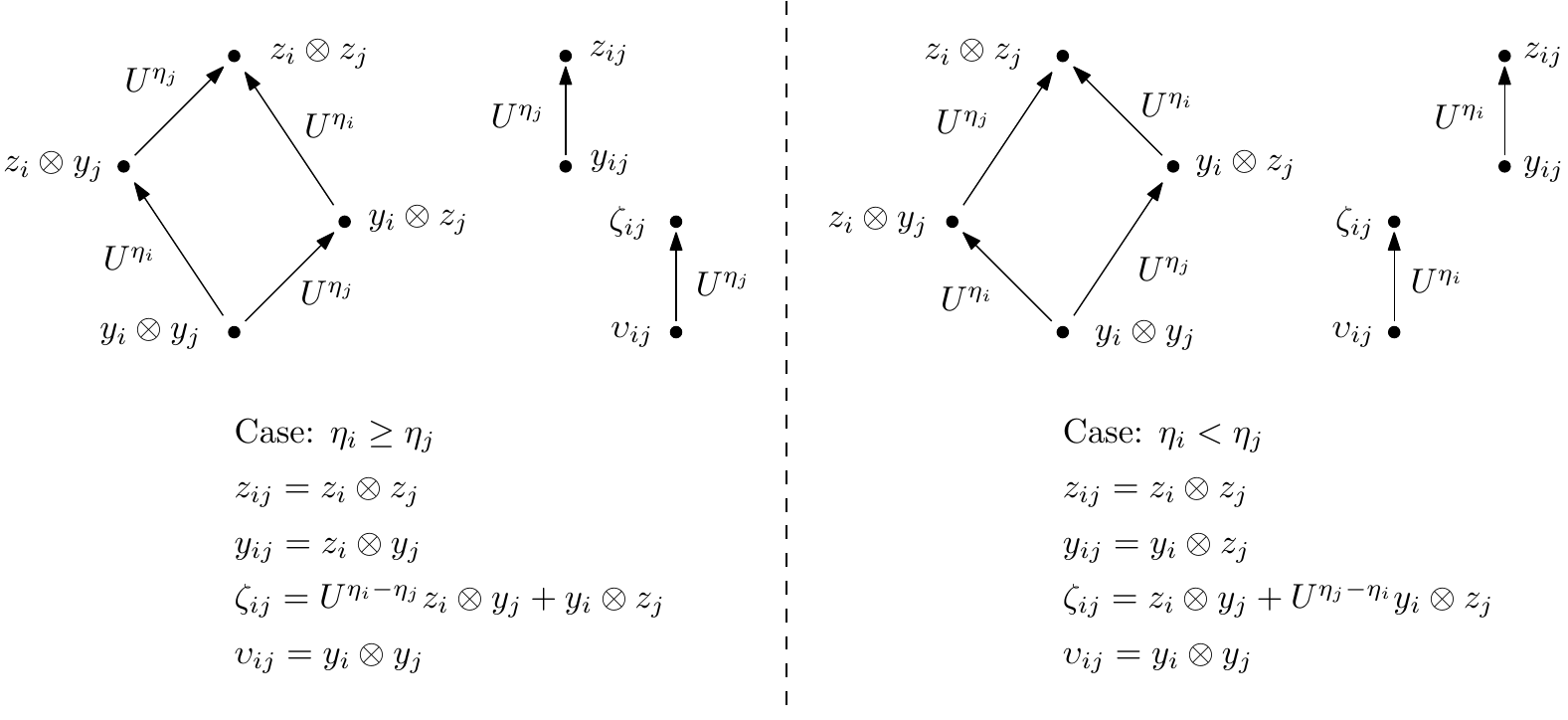}
\caption{Schematic depiction of a paired basis for $\cC_1 \otimes \cC_2$. Generators are placed vertically according to their gradings. Arrows record the action of $\partial$, with labels representing multiplication by some power of $U$. For example, $\partial (y_i \otimes y_j) = U^{\eta_i}z_i \otimes y_j + U^{\eta_j}y_i \otimes z_j$.}
\label{fig:paired}
\end{figure}

\begin{definition}\label{def:shiftmap}
For each $n \geq 1$, define the \textit{shift map} $\shift_n \from \mfIhat \to \mfIhat$ as follows. Let $\cC = \cC(a_i, b_i)$ be a standard complex. Then $\shift_n(\cC) = (a_i', b_i')$ is defined to be the standard complex (of the same length) with $a_i' = a_i$ and
\[
b_i' =
\begin{cases}
b_i &\text{if } |b_i| < n\\
b_i + 1 &\text{if } b_i \geq n\\
b_i - 1 &\text{if } b_i \leq -n.
\end{cases}
\]
Note that the (ungraded) homology of $\shift_n(\cC)$ is obtained from the homology of $\cC$ by replacing each tower of height $\geq n$ by a tower of one greater height. If $\cC$ has a paired basis given by $\{x, y_i, z_i; \eta_i\}$, then $\shift_n(\cC)$ has a corresponding paired basis given by $\{x', y_i', z_i'; \eta_i'\}$, where
\begin{equation}\label{eqn:7.1}
\eta_i' = 
\begin{cases}
\eta_i & \text{if } \eta_i < n \\
\eta_i + 1 & \text{if } \eta_i \geq n.
\end{cases}
\end{equation}
We extend $\shift_n$ to all of $\mfIhat$ by passing to the representative standard complex in each equivalence class.
\end{definition}

\begin{definition}\label{def:shiftcor}
Let $\cC$ be a standard complex and let $\cC' = \shift_n(\cC)$. We refer to the correspondence 
\[ 
\{x, y_i, z_i\} \mapsto \{x', y_i', z_i'\}
\]
between the unprimed generators of $\cC$ and the primed generators of $\cC'$ as the \textit{shift correspondence}. We extend this to all of $\cC$ by imposing linearity and $U$-equivariance. Note that the shift correspondence is not a chain map, nor is it grading-preserving. However, it does commute with $\omega$.

Similarly, let $\cC_1$ and $\cC_2$ be standard complexes. Then $\shift_n(\cC_1) \otimes \shift_n(\cC_2)$ has a paired basis defined by viewing $\shift_n(\cC_1)$ and $\shift_n(\cC_2)$ as standard complexes and applying the basis change of Example~\ref{ex:tensorpaired}. We denote the paired basis generators of $\shift_n(\cC_1) \otimes \shift_n(\cC_2)$ constructed in this manner by 
\[
\{x_{00}', y_{ij}', \upsilon_{ij}', z_{ij}', \zeta_{ij}'; \eta_{ij}'\}.
\]
An examination of the definition of $\eta_{ij}$ shows that
\begin{equation}\label{eqn:7.2}
\eta_{ij}' =
\begin{cases}
\eta_{ij} & \text{if } \eta_{ij} < n\\
\eta_{ij} + 1 & \text{if } \eta_{ij} \geq n.
\end{cases}
\end{equation}
In this situation, we similarly refer to the correspondence between the unprimed generators of $\cC_1 \otimes \cC_2$ and the primed generators of $\shift_n(\cC_1) \otimes \shift_n(\cC_2)$ as the shift correspondence. Note, however, that this is \textit{not} the tensor product of the shift correspondences $\cC_1 \mapsto \shift_n(\cC_1)$ and $\cC_2 \mapsto \shift_n(\cC_2)$. (We refer to this latter correspondence as the \textit{tensor product correspondence}.) Instead, we have:
\end{definition}

\begin{lemma}\label{lem:shiftcor}
Let $\cC_1$ and $\cC_2$ be standard complexes. Then the shift correspondence $\cC_1 \otimes \cC_2 \mapsto \shift_n(\cC_1) \otimes \shift_n(\cC_2)$ is congruent to the tensor product correspondence modulo~$U$.
\end{lemma}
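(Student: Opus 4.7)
The plan is to exploit the $\F[U]$-linearity of both correspondences and verify the congruence generator-by-generator on the paired basis $\{x_{00}, y_{ij}, \upsilon_{ij}, z_{ij}, \zeta_{ij}\}$ of $\cC_1 \otimes \cC_2$ constructed in Example~\ref{ex:tensorpaired}. The upshot will be that most generators are dispatched trivially, and only the ``twisted'' generators $\zeta_{ij}$ with $i, j > 0$ contribute a genuine discrepancy which turns out to lie in $\im U$.

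First I would handle the ``axis'' generators $x_{00}$, $y_{i0}$, $z_{i0}$, $y_{0j}$, $z_{0j}$: these are defined as single tensors of the form $\xi \otimes x$ or $x \otimes \xi$ with no $U$-powers and no branching in the definition, so both correspondences send them to the evident primed counterparts. Likewise, for $i, j > 0$, the generators $z_{ij} = z_i \otimes z_j$ and $\upsilon_{ij} = y_i \otimes y_j$ involve a single tensor and no $U$-powers, so both correspondences agree on them on the nose.

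For $y_{ij}$ with $i, j > 0$, the definition branches on whether $\eta_i \geq \eta_j$. The key preliminary step is therefore to check that $\eta_i \geq \eta_j$ if and only if $\eta_i' \geq \eta_j'$. This follows from \eqref{eqn:7.1} by a short case analysis on whether each of $\eta_i, \eta_j$ is $<n$ or $\geq n$: the only nontrivial case is when exactly one of them is $\geq n$, and then the shift increases only the larger of the two, strictly preserving the inequality. Once this is in hand, $y_{ij}$ and $y_{ij}'$ are literally the same tensor with primes added, and the two correspondences agree exactly on $y_{ij}$.

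This leaves the generators $\zeta_{ij}$ with $i, j > 0$ as the only nontrivial case, and this is where the main (minor) subtlety lies. Assume without loss of generality that $\eta_i \geq \eta_j$, so that the tensor-product correspondence sends $\zeta_{ij}$ to $U^{\eta_i - \eta_j} z_i' \otimes y_j' + y_i' \otimes z_j'$, whereas the shift correspondence sends $\zeta_{ij}$ to $\zeta_{ij}' = U^{\eta_i' - \eta_j'} z_i' \otimes y_j' + y_i' \otimes z_j'$. The exponents $\eta_i - \eta_j$ and $\eta_i' - \eta_j'$ coincide unless $\eta_i \geq n > \eta_j$, in which case they differ by exactly $1$; but in that subcase we necessarily have $\eta_i > \eta_j$, so $\eta_i - \eta_j \geq 1$. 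The discrepancy $(U^{\eta_i - \eta_j} - U^{\eta_i' - \eta_j'}) z_i' \otimes y_j'$ then equals $U^{\eta_i - \eta_j}(1 + U) z_i' \otimes y_j'$, which lies in $\im U$. The case $\eta_i < \eta_j$ is symmetric. The expected main obstacle is not conceptual but organizational: keeping the casework on $\min(\eta_i, \eta_j)$ versus $n$ straight, and confirming that the strict inequality $\eta_i > \eta_j$ really is forced precisely in the only subcase where the two exponents differ.
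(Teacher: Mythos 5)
Your proof is correct and follows essentially the same route as the paper's: reduce to a generator-by-generator check on the paired basis, note that $\eta_i \geq \eta_j$ iff $\eta_i' \geq \eta_j'$ so all generators except the $\zeta_{ij}$ match on the nose, and observe that when the exponents $\eta_i - \eta_j$ and $\eta_i' - \eta_j'$ differ they are both positive, so the discrepancy lies in $\im U$. Your pinpointing of the discrepancy case as exactly $\eta_i \geq n > \eta_j$ is a slightly more explicit version of the paper's observation that $\eta_i' - \eta_j' \in \{\eta_i - \eta_j, \eta_i - \eta_j + 1\}$ when $\eta_i - \eta_j > 0$, but the substance is identical.
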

\begin{proof}
Denote the paired basis for $\cC_1$ by $\{x, y_i, z_i; \eta_i\}$ and the paired basis for $\cC_2$ by $\{x, y_j, z_j; \eta_j\}$. We likewise use the primed notation for $\cC_1' = \shift_n(\cC_1)$ and $\cC_2' = \shift_n(\cC_2)$. An examination of (\ref{eqn:7.1}) shows that $\eta_i \geq \eta_j$ if and only if $\eta_i' \geq \eta_j'$. It is then straightforward to check that the shift correspondence is equal to the tensor product correspondence on all generators except possibly the $\zeta_{ij}$. To see why the two might differ in this case, suppose that $\eta_i \geq \eta_j$. Then $\zeta_{ij}'$ is equal to
\[
U^{\eta_i'-\eta_j'}z_{i}'\otimes y_j' + y_{i}'\otimes z_j',
\]
while the tensor product correspondence instead sends $\zeta_{ij}$ to 
\[
U^{\eta_i-\eta_j}z_{i}'\otimes y_j' + y_{i}'\otimes z_j'.
\]
Now, if $\eta_i - \eta_j = 0$, then $\eta_i' - \eta_j' = 0$. However, if $\eta_i - \eta_j > 0$, then either $\eta_i' - \eta_j' = \eta_i - \eta_j$, or $\eta_i' - \eta_j' = \eta_i - \eta_j + 1$. This shows that the two expressions above do not have to be equal, but are congruent mod $U$. A similar argument holds in the case that $\eta_i < \eta_j$.
\end{proof}

The main goal of this subsection will be to show that $\shift_n$ is a homomorphism. We begin with the following auxiliary definition:

\begin{definition}\label{def:almostchain}
Let $\cC$ be a standard complex with paired basis $\{x, y_i, z_i; \eta_i\}$ and let $\cA$ be any almost $\inv$-complex. An \emph{almost chain correspondence} $f\co \cC \rightarrow \cA$ is an ungraded $\ff[U]$-module map for which:
\begin{enumerate}
\item $\partial f(x) = 0$ and $\partial f(z_i) = 0$ for all $i$,
\item $\partial f(y_i)\equiv U^{\eta_i}f(z_i)\bmod{U^{\eta_i+1}}$ for all $i$; and,
\item $f\omega + \omega f \equiv 0 \bmod U$.
\end{enumerate}
Thus $f$ sends cycles to cycles, but $f(\partial y_i) + \partial f(y_i)$ is only zero modulo $U^{\eta_i + 1}$. We stress that $f$ is not required to be graded (or even homogeneous), although it is linear and $U$-equivariant.
\end{definition}

\begin{remark}
It turns out that for our application, the condition $\partial f(z_i) = 0$ in Definition~\ref{def:almostchain} is unnecessary. However, we have included it for completeness.
\end{remark}

\noindent
The main import of Definition~\ref{def:almostchain} will be the following lemma, which explains how to obtain a genuine almost $\inv$-map from an almost chain correspondence. In our context, we will need to construct almost $\inv$-maps between various complexes, but it will often be more convenient to construct almost chain correspondences instead (which is why we have introduced Definition~\ref{def:almostchain}).

\begin{lemma}\label{lem:almostchaingood}
Let $f \co \cC \rightarrow \cA$ be an almost chain correspondence. Then there exists an almost $\inv$-map $g \co \cC \rightarrow \cA$. Suppose moreover that the homogenous part of $f(x)$ in grading zero is a $U$-nontorsion class in $H_*(\cA)$. Then $g$ is local.
\end{lemma}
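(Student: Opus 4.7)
The plan is to obtain $g$ by taking the grading-preserving part of $f$ on the paired basis $\{x, y_i, z_i; \eta_i\}$ for $\cC$ from Example~\ref{ex:standardpaired}, then making a small $U$-multiple correction to repair the chain map condition on the non-cycle generators. Since $\cA$ is free over $\F[U]$, multiplication by $U$ is injective, so the hypothesis $\partial f(y_i) \equiv U^{\eta_i} f(z_i) \bmod U^{\eta_i+1}$ determines a unique $h_i \in \cA$ with $\partial f(y_i) - U^{\eta_i} f(z_i) = U^{\eta_i+1} h_i$. Writing $(a)_d$ for the homogeneous component of $a \in \cA$ in grading $d$, I define
\[ g(x) = (f(x))_0, \quad g(y_i) = (f(y_i))_{\gr(y_i)}, \quad g(z_i) = (f(z_i))_{\gr(z_i)} + U(h_i)_{\gr(y_i)+1+2\eta_i}, \]
and extend $\F[U]$-linearly. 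A grading check (using $\gr(z_i) = \gr(y_i) - 1 + 2\eta_i$) shows $g$ is grading-preserving; it is $U$-equivariant by construction.

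Next I would verify that $g$ is a chain map. The generator $x$ and the homogeneous parts $(f(z_i))_{\gr(z_i)}$ are cycles because $f$ sends cycles to cycles; applying $\partial$ to the defining equation for $h_i$ and using freeness of $\cA$ forces $\partial h_i = 0$, so the correction term $(h_i)_{\gr(y_i)+1+2\eta_i}$ is also a cycle, hence $g(z_i)$ is a cycle. On $y_i$, extracting the grading-$(\gr(y_i)-1)$ component of $\partial f(y_i) - U^{\eta_i} f(z_i) = U^{\eta_i+1} h_i$ yields
\[ \partial g(y_i) = U^{\eta_i}(f(z_i))_{\gr(z_i)} + U^{\eta_i+1}(h_i)_{\gr(y_i)+1+2\eta_i} = U^{\eta_i} g(z_i), \]
which is the required chain map identity.

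Third, I would check $g\omega + \omega g \equiv 0 \bmod U$. Since $\omega$ preserves grading, extracting the grading-$\gr(T_j)$ component of $(f\omega + \omega f)(T_j) \in U\cA$ produces the same relation for the uncorrected map $T_j \mapsto (f(T_j))_{\gr(T_j)}$. The correction terms that distinguish $g$ from this uncorrected map lie in $\im U$, so they vanish modulo $U$ and do not disturb the relation. Locality is then immediate: if $(f(x))_0$ is $U$-nontorsion, so is $g(x)$ by definition, and $g$ sends the $U$-nontorsion tower of $\cC$ to the $U$-nontorsion tower of $\cA$.

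The main obstacle is the chain map condition on the $y_i$: a naive homogeneous truncation need not give a strict chain map, and one cannot in general fix this by modifying $g(y_i)$, since the putative antiderivative of $U^{\eta_i+1}(h_i)_{\gr(y_i)+1+2\eta_i}$ need not exist in $\cA$. The key observation enabling the proof is that the obstruction can instead be absorbed into $g(z_i)$ by a $U$-multiple of a cycle, which (i) is a valid correction because freeness of $\cA$ makes $(h_i)_{\gr(y_i)+1+2\eta_i}$ a cycle in the correct grading, and (ii) is invisible both to the grading constraint and to the mod-$U$ relation with $\omega$.
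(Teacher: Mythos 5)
Your proposal is correct and takes essentially the same route as the paper: your $g(z_i) = (f(z_i))_{\gr(z_i)} + U(h_i)_{\gr(z_i)+2}$ is exactly the degree-$\gr(z_i)$ component of $\partial f(y_i)/U^{\eta_i}$, which is how the paper defines $g(z_i)$, and your verifications of the chain-map identity and of $g\omega + \omega g \equiv 0 \bmod U$ (via gradedness of $\partial$ and $\omega$ and the fact that the correction lies in $\im U$) mirror the paper's argument. The only cosmetic difference is that you deduce $\partial g(z_i)=0$ from $\partial f(z_i)=0$ and $\partial h_i = 0$, while the paper gets it directly from $\partial^2 = 0$.
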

\begin{proof}
For any element $e$ of $\cC$, let $[e]_d$ denote the homogeneous part of $e$ lying in grading $d$. Define
\begin{align*}
g(x) &= [f(x)]_0, \\
g(y_i) &= [f(y_i)]_{\gr(y_i)}, \text{and} \\
g(z_i) &= [\partial f(y_i)/U^{\eta_i}]_{\gr(z_i)},
\end{align*}
extending linearly and $U$-equivariantly. Clearly, $g$ is homogeneous and grading-preserving. To check that $g$ is a chain map, we use the fact that $\partial$ is graded:
\begin{align*}
\partial g(y_i) &= [\partial f(y_i)]_{\gr(y_i) - 1} \\
&= U^{\eta_i} [\partial f(y_i)/U^{\eta_i}]_{\gr(y_i) - 1 + 2\eta_i} \\
&= U^{\eta_i} g(z_i)
\end{align*}
and
\[
\partial g(z_i) = [\partial^2 f(y_i)/U^{\eta_i}]_{\gr(z_i)- 1} = 0.
\]
This shows that $g$ is a chain map. To see that $g$ satisfies the $\omega$-condition, note that $g(z_i) \equiv [f(z_i)]_{\gr(z_i)} \bmod U$ (using (2) of Definition~\ref{def:almostchain}). Since this congruence is an equality for the other basis generators, we in fact have
\[
g(e) \equiv [f(e)]_{\gr(e)} \bmod U
\]
for any homogenous element $e$ of $\cC$. Using the fact that $\omega$ is graded, it follows that
\[
\omega g(T_i) \equiv \omega [f(T_i)]_{\gr(T_i)} = [ \omega f(T_i) ]_{\gr(T_i)} \equiv [ f(\omega T_i) ]_{\gr(T_i)} \equiv g(\omega T_i) \bmod{U}
\]
for any generator $T_i$ of $\cC$. This completes the proof.
\end{proof}

We now come to the main technical lemma of this section:

\begin{lemma}\label{lem:localshiftmap}
Let $\cC$, $\cC_1$, and $\cC_2$ be standard complexes, and suppose we have a local map $f\co \cC \rightarrow \cC_1 \otimes \cC_2$. Then there is a local map $f' \co \shift_n(\cC) \rightarrow \shift_n(\cC_1) \otimes \shift_n(\cC_2)$.
\end{lemma}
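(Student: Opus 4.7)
The plan is to define $f'$ explicitly using the shift correspondence, verify that it is an almost chain correspondence in the sense of Definition~\ref{def:almostchain}, and then apply Lemma~\ref{lem:almostchaingood} to extract a local almost $\iota$-map. Concretely, fix a paired basis $\{x, y_i, z_i; \eta_i\}$ for $\cC$ as in Example~\ref{ex:standardpaired}, and let $\{x_{00}, y_{kl}, \upsilon_{kl}, z_{kl}, \zeta_{kl}; \eta_{kl}\}$ be the paired basis of $\cC_1 \otimes \cC_2$ produced by Example~\ref{ex:tensorpaired}. These induce corresponding shifted paired bases on $\shift_n(\cC)$ and $\shift_n(\cC_1) \otimes \shift_n(\cC_2)$. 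Writing $f$ in the unprimed bases, I would define $f'$ on the shifted bases by mapping each paired-basis generator to the shift-correspondence image of its $f$-value, and then extending $\F[U]$-linearly. Equivalently, $f'$ is the composition of the inverse shift correspondence on $\cC$, followed by $f$, followed by the shift correspondence on $\cC_1 \otimes \cC_2$.

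Next, I would check the three conditions of Definition~\ref{def:almostchain}. Condition (1) — that $f'(x')$ and each $f'(z_i')$ are cycles — is immediate from the analogous property of $f$, since the shift correspondence sends the cycle generators of the paired basis (namely $x_{00}, z_{kl}, \zeta_{kl}$) to cycles. Condition (3), $f'\omega + \omega f' \equiv 0 \bmod U$, follows by noting that the shift correspondence commutes with $\omega$ on standard complexes; on the target side, Lemma~\ref{lem:shiftcor} says the paired-basis shift correspondence on $\cC_1 \otimes \cC_2$ agrees with the tensor shift modulo $U$, so the condition on $f'$ reduces modulo $U$ to the already-known condition $f\omega + \omega f \equiv 0 \bmod U$ satisfied by $f$.

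The main obstacle will be condition (2), $\partial f'(y_i') \equiv U^{\eta_i'} f'(z_i') \bmod U^{\eta_i' + 1}$. Expanding $f(y_i)$ in the paired basis of $\cC_1 \otimes \cC_2$, the chain map relation $\partial f(y_i) = U^{\eta_i} f(z_i)$ constrains the coefficient of each $y_{kl}$- or $\upsilon_{kl}$-term by divisibility, and grading preservation forces each such coefficient to be a monomial in $U$ (or zero). One then tracks how the shift modifies the $U$-powers in $\partial$-arrows of $\shift_n(\cC_1) \otimes \shift_n(\cC_2)$ — namely, $\eta_{kl}' - \eta_{kl} \in \{0,1\}$, with the jump occurring exactly when $\eta_{kl} \geq n$ — as compared to the relation between $\eta_i$ and $\eta_i'$. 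Case analysis, splitting on whether each of $\eta_i$ and $\eta_{kl}$ lies above or below the threshold $n$, shows that the two sides of (2) differ by an element of $\im U^{\eta_i' + 1}$. The delicate case is when $\eta_i \geq n$ but $\eta_{kl} < n$, where one must use the precise monomial structure of the coefficient together with the divisibility forced by the chain map condition to absorb the discrepancy.

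Finally, since $f$ is local, $f(x)$ represents the generator of the $U$-nontorsion tower, with leading component $x_{00}$ in grading zero; the shift correspondence fixes $x_{00} \mapsto x_{00}'$, so the homogeneous grading-zero part of $f'(x')$ is a $U$-nontorsion class in $\shift_n(\cC_1) \otimes \shift_n(\cC_2)$. The hypothesis of Lemma~\ref{lem:almostchaingood} is thus satisfied, producing the desired local almost $\iota$-map $g \co \shift_n(\cC) \to \shift_n(\cC_1) \otimes \shift_n(\cC_2)$.
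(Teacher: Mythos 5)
Your overall framework---construct an almost chain correspondence and apply Lemma~\ref{lem:almostchaingood}---matches the paper, as does your treatment of conditions (1) and (3). But your definition of $f'$ as the literal shift conjugate of $f$ is too naive: the discrepancy in condition (2) that you flag as ``delicate'' genuinely cannot be absorbed by the available divisibility.

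Concretely, write $f(y_k) = \text{(cycles)} + \sum U^{*_{ij}} y_{ij} + \sum U^{*_{ij}} \upsilon_{ij}$, so that the chain map equation $\partial f(y_k) = U^{\eta_k}f(z_k)$ forces $f(z_k) = \sum U^{*_{ij} + \eta_{ij} - \eta_k} z_{ij} + \cdots$, and the only constraint coming from $f$ being $\F[U]$-linear is $*_{ij} + \eta_{ij} \geq \eta_k$. Now suppose $\eta_k \geq n$ and $\eta_{ij} < n$, so that $\eta_k' = \eta_k + 1$ while $\eta_{ij}' = \eta_{ij}$. With $f'(y_k') := f(y_k)'$, the $z_{ij}'$-component of $\partial f'(y_k')$ is $U^{*_{ij} + \eta_{ij}}z_{ij}'$, while the $z_{ij}'$-component of $U^{\eta_k'}f'(z_k')$ is $U^{*_{ij} + \eta_{ij} + 1}z_{ij}'$. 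Condition (2) requires their difference to vanish modulo $U^{\eta_k'+1} = U^{\eta_k + 2}$, which forces $*_{ij} + \eta_{ij} \geq \eta_k + 2$. The divisibility only gives $*_{ij} + \eta_{ij} \geq \eta_k$, leaving a gap of $2$: when $*_{ij} + \eta_{ij} = \eta_k$ (i.e., $z_{ij}$ appears in $f(z_k)$ with trivial $U$-power, which certainly can occur), the discrepancy $U^{\eta_k}z_{ij}' + U^{\eta_k+1}z_{ij}'$ is not in $\im U^{\eta_k+2}$, so your $f'$ is not an almost chain correspondence.

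The paper fixes this precisely by \emph{not} using the naive shift conjugate on the non-cycle generators. When $\eta_k \geq n$, each $y_{ij}$- or $\upsilon_{ij}$-term of $f(y_k)$ with $\eta_{ij} < n$ is multiplied by one extra power of $U$ before the shift correspondence is applied. With that modified definition, $\partial f'(y_k') = U^{\eta_k'}f'(z_k')$ holds on the nose. One must then recheck condition (3), since $f'$ is no longer the literal conjugate; the key observation is that $*_{ij} \geq \eta_k - \eta_{ij} > 0$ for every modified term, so $f'(y_k') \equiv f(y_k)' \bmod U$ and your $\omega$-argument still applies. Without some such modification of $f'(y_k')$ in the $\eta_k \geq n$ case, the proof does not go through.
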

\begin{proof}
By Lemma~\ref{lem:almostchaingood}, it suffices to construct an almost chain correspondence between $\shift_n(\cC)$ and $\shift_n(\cC_1) \otimes \shift_n(\cC_2)$. Let $\{x, y_k, z_k; \eta_k\}$ be the usual paired basis for $\cC$, and let $\{x_{00}, y_{ij}, \upsilon_{ij}, z_{ij}, \zeta_{ij}; \eta_{ij}\}$ be the basis for $\cC_1 \otimes \cC_2$ constructed in Example~\ref{ex:tensorpaired}. We similarly use the primed notation of Definition~\ref{def:shiftcor} for the paired bases of $\shift_n(\cC)$ and $\shift_n(\cC_1) \otimes \shift_n(\cC_2)$. Before we begin, it will be helpful to explicitly examine $f(y_k)$ and $f(z_k)$. Write $f(y_k)$ as the sum of ($U$-powers of) the non-cycle generators $y_{ij}$ and $\upsilon_{ij}$, together with possibly some cycle generators:
\begin{equation}\label{eqn:7.3}
f(y_k) = \text{cycles}\phantom{.} + \sum U^{*} y_{ij} + \sum U^{*} \upsilon_{ij}.
\end{equation}
Since $f$ is a chain map, we have $\partial f(y_k) = U^{\eta_k} f(z_k)$. It follows that $f(z_k)$ must be the sum of ($U$-powers of) the $U$-torsion generators with the same index:
\begin{equation}\label{eqn:7.4}
f(z_k) = \sum U^{*+ \eta_{ij} - \eta_k} z_{ij} + \sum U^{*+ \eta_{ij} - \eta_k} \zeta_{ij}.
\end{equation}
Here, the $U$-exponents $U^*$ are the same in (\ref{eqn:7.3}) as they are in (\ref{eqn:7.4}), and we have the obvious equality of index sets between (\ref{eqn:7.3}) and (\ref{eqn:7.4}).

We now define $f'$. On the cycle generators of $\cC$, let
\begin{align*}
&f'(x') = f(x)' \text{ and} \\
&f'(z_k') = f(z_k)'.
\end{align*}
Defining $f'(y_k')$ is more complicated. There are two possibilities. If $\eta_k < n$, let
\[
f'(y_k') = f(y_k)'
\]
as before. If $\eta_k \geq n$, we first separate the non-cycle generators appearing in $f(y_k)$ into those whose indices have $\eta_{ij} < n$ and those with $\eta_{ij} \geq n$: 
\begin{align}\label{eqn:7.5}
f(y_k) = \text{cycles}\phantom{.} &+ \sum_{\eta_{ij} < n} U^{*} y_{ij} + \sum_{\eta_{ij} \geq n} U^{*} y_{ij} \\
&+  \sum_{\eta_{ij} < n} U^{*} \upsilon_{ij} + \sum_{\eta_{ij} \geq n} U^{*} \upsilon_{ij}. \nonumber
\end{align}
We then define
\begin{align}\label{eqn:7.6}
f'(y_k') = \text{cycles}'\phantom{.} &+ \sum_{\eta_{ij} < n} U^{*+1} y_{ij}' + \sum_{\eta_{ij} \geq n} U^{*} y_{ij}' \\
&+  \sum_{\eta_{ij} < n} U^{*+1} \upsilon_{ij}' + \sum_{\eta_{ij} \geq n} U^{*} \upsilon_{ij}'. \nonumber 
\end{align}
That is, $f'(y_k')$ is defined as before, except that whenever a non-cycle generator with $\eta_{ij} < n$ appears, we multiply it by an extra power of $U$. 

We now show that $f'$ is an almost chain correspondence. Since the shift correspondence sends cycles to cycles, it is clear that $\partial f'(x') = 0$ and $\partial f'(z_k') = 0$. To check the $\partial$-condition on $y_k$, first suppose that $\eta_k < n$. Then $\eta_k' = \eta_k$. Applying the shift correspondence to (\ref{eqn:7.3}) and taking the differential yields
\begin{align*}
\partial f'(y_k') = \sum U^{* + \eta_{ij}'} z_{ij}' + \sum U^{* + \eta_{ij}'} \zeta_{ij}'.
\end{align*}
Applying the shift correspondence to (\ref{eqn:7.4}) and multiplying through by $U^{\eta_k'}$ gives
\begin{align*}
U^{\eta_k'} f'(z_k') = \sum U^{*+ \eta_{ij}} z_{ij}' + \sum U^{*+ \eta_{ij}} \zeta_{ij}'.
\end{align*}
We thus see these two expressions do not have to be equal, since there may be some index pair $(i,j)$ for which $\eta_{ij}' = \eta_{ij} + 1$. However, this only occurs when $\eta_{ij} \geq n$. Since $\eta_k < n$, it follows that reducing the two expressions above modulo $\smash{U^{\eta_k + 1}}$ sends both of the offending terms to zero.

Now suppose that $\eta_k \geq n$. Then $\eta_k' = \eta_k + 1$. Taking the differential of (\ref{eqn:7.6}) yields 
\begin{align}\label{eqn:7.7}
\partial f'(y_k') = &\sum_{\eta_{ij} < n} U^{*+1 + \eta_{ij}'} z_{ij}' + \sum_{\eta_{ij} \geq n} U^{* + \eta_{ij}'} z_{ij}'  + \\
&\sum_{\eta_{ij} < n} U^{*+1 + \eta_{ij}'} \zeta_{ij}' + \sum_{\eta_{ij} \geq n} U^{* + \eta_{ij}'} \zeta_{ij}'. \nonumber
\end{align}
Applying the shift correspondence to (\ref{eqn:7.4}) and multiplying through by $U^{\eta_k'}$ gives
\begin{align*}
U^{\eta_k'} f'(z_k') = \sum U^{*+ \eta_{ij} + 1} z_{ij}' + \sum U^{*+ \eta_{ij} + 1} \zeta_{ij}'.
\end{align*}
It is easily checked that these two expressions are equal. Indeed, if $\eta_{ij} \geq n$, then $\eta_{ij}' = \eta_{ij} + 1$. On the other hand, if $\eta_{ij} < n$, then $\eta_{ij}' = \eta_{ij}$, and the corresponding term in (\ref{eqn:7.7}) already occurs with the needed extra power of $U$. This shows that $f$ satisfies (1) and (2) of Definition~\ref{def:almostchain}.

It thus remains to show that $f'$ satisfies the $\omega$-condition. For this, we first observe that $f'(y_k') \equiv f(y_k)' \bmod U$. Indeed, if $\eta_k < n$, then this congruence is an equality. If $\eta_k \geq n$, then a comparison of (\ref{eqn:7.5}) and (\ref{eqn:7.6}) shows that $f(y_k)'$ and $f'(y_k')$ differ precisely in those terms for which $\eta_{ij} < n$. For such an index pair, the corresponding exponent $U^*$ must satisfy $* + \eta_{ij} - \eta_k \geq 0$, by (\ref{eqn:7.4}). This means that $* > 0$, since $\eta_k \geq n > \eta_{ij}$. Reduction modulo $U$ thus gives the desired congruence. This shows that $f'(e') \equiv f(e)' \bmod U$ for all $e$ in $\cC$.

Now, the shift correspondence $\cC \mapsto \shift_n(\cC)$ commutes with $\omega$. By Lemma~\ref{lem:shiftcor}, the shift correspondence $\cC_1 \otimes \cC_2 \mapsto \shift_n(\cC_1) \otimes \shift_n(\cC_2)$ is congruent to the tensor product correspondence $\bmod$ $U$. It is easy to check that the tensor product correspondence commutes with $\omega$; hence the shift correspondence $\cC_1 \otimes \cC_2 \mapsto \shift_n(\cC_1) \otimes \shift_n(\cC_2)$ commutes with $\omega \bmod{U}$. The $\omega$-condition for $f'$ then follows from the previous paragraph, together with the fact that $f$ commutes with $\omega \bmod{U}$. Observing that $f'(x')$ is a $U$-nontorsion cycle of degree zero, applying Lemma~\ref{lem:almostchaingood} completes the proof.
\end{proof}

We thus finally obtain the desired theorem:

\begin{theorem}\label{thm:shifthomomorphism}
For any $n\geq 1$, the shift map $\shift_n$ is a homomorphism from $\smash{\mfIhat}$ to $\smash{\mfIhat}$.
\end{theorem}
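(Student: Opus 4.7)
The strategy is to combine the total order on $\smash{\mfIhat}$ (Theorem~\ref{thm:totalorder}) with Lemma~\ref{lem:localshiftmap} and duality in order to reduce the homomorphism property to two directed inequalities. Let $\cC_1, \cC_2$ be standard complexes, and let $\cC$ denote the unique standard complex locally equivalent to $\cC_1 \otimes \cC_2$, supplied by Theorem~\ref{thm:parameterization}. Since $\shift_n$ is defined on $\smash{\mfIhat}$ by passing to the representative standard complex in each local equivalence class, the identity $\shift_n(\cC_1 \otimes \cC_2) = \shift_n(\cC_1) \otimes \shift_n(\cC_2)$ in $\smash{\mfIhat}$ is equivalent to the local equivalence $\shift_n(\cC) \sim \shift_n(\cC_1) \otimes \shift_n(\cC_2)$. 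By Theorem~\ref{thm:totalorder}, it then suffices to establish both inequalities $\shift_n(\cC) \leq \shift_n(\cC_1) \otimes \shift_n(\cC_2)$ and $\shift_n(\cC_1) \otimes \shift_n(\cC_2) \leq \shift_n(\cC)$.

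The first inequality is immediate from Lemma~\ref{lem:localshiftmap}: the local equivalence $\cC \sim \cC_1 \otimes \cC_2$ furnishes a local map $\cC \to \cC_1 \otimes \cC_2$, which Lemma~\ref{lem:localshiftmap} lifts to a local map $\shift_n(\cC) \to \shift_n(\cC_1) \otimes \shift_n(\cC_2)$. For the reverse inequality, I would invoke duality. A direct inspection of Definitions~\ref{def:stdcomplexes} and \ref{def:aicdual} shows that for any standard complex $\cC(t_1, \ldots, t_k)$, the dual is canonically isomorphic as an almost $\iota$-complex to the standard complex $\cC(-t_1, \ldots, -t_k)$ obtained by flipping the sign of every symbol in the sequence. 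Comparing this sign-flipping with Definition~\ref{def:shiftmap}, one verifies by a routine case check (depending on whether each $|b_i|$ is less than $n$) that $\shift_n$ commutes with dualization on standard complexes; that is, $\shift_n(\cC^\vee) = \shift_n(\cC)^\vee$.

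Now dualize the local map $\cC_1 \otimes \cC_2 \to \cC$, using the natural identification $(\cC_1 \otimes \cC_2)^\vee \cong \cC_1^\vee \otimes \cC_2^\vee$, which holds for free, finitely generated $\F[U]$-complexes and is compatible with the tensor $\iota$-action, to obtain a local map $\cC^\vee \to \cC_1^\vee \otimes \cC_2^\vee$. Since each of $\cC^\vee, \cC_1^\vee, \cC_2^\vee$ is a standard complex, Lemma~\ref{lem:localshiftmap} applies and produces a local map $\shift_n(\cC)^\vee \to \shift_n(\cC_1)^\vee \otimes \shift_n(\cC_2)^\vee$. Dualizing once more, and using that dualization sends local maps to local maps in the opposite direction, yields the desired local map $\shift_n(\cC_1) \otimes \shift_n(\cC_2) \to \shift_n(\cC)$. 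The substantive technical work has already been subsumed by Lemma~\ref{lem:localshiftmap}; the only remaining hurdle is the elementary verification that $\shift_n$ commutes with dualization on standard complexes, which is the one place one might fear subtlety but which resolves cleanly at the symbol-sequence level.
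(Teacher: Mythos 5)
Your proposal is correct and follows essentially the same route as the paper: reduce to standard complexes via Theorem~\ref{thm:parameterization}, use Lemma~\ref{lem:localshiftmap} for the inequality $\shift_n(\cC) \leq \shift_n(\cC_1) \otimes \shift_n(\cC_2)$, and use compatibility of $\shift_n$ with dualization (at the level of standard complex parameters) for the reverse. The only cosmetic difference is in the reverse inequality: you dualize the local map $\cC_1 \otimes \cC_2 \to \cC$ and re-apply Lemma~\ref{lem:localshiftmap} to the dual standard complexes, whereas the paper applies its first inequality to $X \otimes Y$ and $X^\vee$ and then tensors with $\shift_n(X)$; both arguments rest on the same identity $\shift_n(\cC^\vee) = \shift_n(\cC)^\vee$ and the duality lemmas of Section~\ref{sec:3}.
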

\begin{proof}
Let $X$ and $Y$ be any two almost $\inv$-complexes. We wish to show that
\[
\shift_n(X \otimes Y) \sim \shift_n(X) \otimes \shift_n(Y).
\]
By Theorem~\ref{thm:parameterization}, up to local equivalence we may replace $X$ and $Y$ with standard complexes $\cC_1$ and $\cC_2$. Similarly, we may replace $X \otimes Y$ with some standard complex $\cC$. Let $f\co \cC \rightarrow \cC_1 \otimes \cC_2$ be a local map. By Lemma~\ref{lem:localshiftmap}, we then have a local map 
\[
f' \co \shift_n(\cC) \rightarrow \shift_n(\cC_1) \otimes \shift_n(\cC_2),
\]
showing that $\shift_n(X \otimes Y) \leq \shift_n(X) \otimes \shift_n(Y)$ for any $X$ and $Y$. To prove the reverse inequality, we apply this to show
\[
\shift_n(Y) = \shift_n(X \otimes Y \otimes X^\vee) \leq \shift_n(X \otimes Y) \otimes \shift_n(X^\vee).
\]
It is easily checked that $\shift_n(X^\vee) = \shift_n(X)^\vee$ by an explicit consideration of how the standard complex parameters change under $\shift_n$ and dualization. Tensoring both sides of the above inequality with $\shift_n(X)$ thus yields $\shift_n(X) \otimes \shift_n(Y) \leq \shift_n(X \otimes Y)$. This completes the proof.
\end{proof}

\subsection{The pivotal homomorphism}\label{sec:pivot-homomorphism}

We now define the second important auxiliary homomorphism of this section. 

\begin{definition}\label{def:pivot}
Let $\cC = \cC(a_i, b_i)$ be a standard complex of length $2n$. We define $P(\cC) \in \Z$ to be the grading of the final generator $T_{2n}$, and extend $P$ to a map from all of $\smash{\mfIhat}$ to $\Z$ by passing to the standard complex representative in each local equivalence class. We refer to $P$ as the \textit{pivotal homomorphism}.
\end{definition}

In order to prove that $P$ is in fact a homomorphism, it will be convenient to immediately recast the definition of $P$, as follows:

\begin{definition}\label{defn:omega-homology}
Let $\cA$ be any (reduced) almost $\inv$-complex. Denote the action of $\omega$ on $\cA/U$ by $\smash{\omegahat}$. Noting that $\smash{\omegahat}^2 = 0$, we define the \textit{$\omega$-homology} of $\cA$ to be
\[
\omegahom(\cA)=\ker \omegahat/\im \omegahat.
\]
This is a graded $\ff$-vector space. Note that if $\cA_1$ and $\cA_2$ are two almost $\inv$-complexes and $f$ is an almost $\inv$-map between them, then $f$ induces a map from $\omegahom(\cA_1)$ to $\omegahom(\cA_2)$. This follows from the fact that $\omega f + f \omega \equiv 0 \bmod U$.
\end{definition}

\begin{example}\label{ex:standardpivot}
It is clear that if $\cC$ is a standard complex, then $\omegahom(\cC)$ is isomorphic to $\ff$, supported precisely in degree $P(\cC)$. Indeed, we will use this as a characterization of $P(\cC)$ (when $\cC$ is a standard complex).
\end{example}

It is easy to see through some simple examples that $\omegahom$ is not an invariant of local equivalence. (Consider any almost $\inv$-complex, and introduce a $U$-torsion tower with no $\omega$-arrows going in or out.) However, we do have the following lemma:

\begin{lemma}\label{lem:omegamapnonzero}
Let $\cA$ be any almost $\inv$-complex. Let $\cC$ be the standard complex in the local equivalence class of $\cA$, and let $f \co \cC \rightarrow \cA$ be a local map. Then the induced map
\[
\tilde{f}: \omegahom(\cC) \cong \ff \rightarrow \omegahom(\cA)
\]
is nonzero.
\end{lemma}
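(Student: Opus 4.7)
The plan is to argue by contradiction, exploiting the maximality of $\cC$ as the standard complex in the local equivalence class of $\cA$. First, I would observe that in a reduced standard complex $\cC$ of length $2n$, the $\omega$-action pairs up the generators $(T_{2k-2}, T_{2k-1})$ for $k = 1, \ldots, n$, while the final generator $T_{2n}$ satisfies $\omega T_{2n} = 0$ and does not appear in the image of $\omega$. Consequently $\omegahom(\cC) \cong \ff$ is generated by $[T_{2n}]$, and $\tilde{f}$ sends this class to $[f(T_{2n})]$; the latter lies in $\ker \omegahat$ since $\omega f(T_{2n}) \equiv f(\omega T_{2n}) = 0 \bmod U$. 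Suppose for contradiction that $[f(T_{2n})] = 0$ in $\omegahom(\cA)$. Since $\omega$ is grading-preserving and $f(T_{2n})$ is homogeneous, I may find a homogeneous element $\alpha \in \cA$ of grading $\gr(T_{2n})$ satisfying $\omega \alpha \equiv f(T_{2n}) \bmod U$.

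The idea is now to use $\alpha$ to build a standard complex strictly greater than $\cC$ that still admits a local map to $\cA$. Let $\cC'$ be the standard complex with symbol sequence $(s_1(\cA), \ldots, s_{2n}(\cA), +, -1)$, so $\cC'$ adjoins two new generators $T_{2n+1}, T_{2n+2}$ with $\omega T_{2n+1} = T_{2n}$ and $\partial T_{2n+1} = U T_{2n+2}$. Extend $f$ to $f' \co \cC' \to \cA$ by setting $f'(T_{2n+1}) = \alpha$ and $f'(T_{2n+2}) = \partial \alpha / U$; the latter is well-defined in $\cA$ because $\cA$ is reduced, so $\partial \alpha \in \im U$. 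I would then verify that $f'$ is a short map in the sense of Definition~\ref{def:shortmaps}: the chain map condition is checked term by term (in particular, $\partial f'(T_{2n+1}) = \partial \alpha = U f'(T_{2n+2}) = f'(\partial T_{2n+1})$), and the $\omega$-condition on $T_{2n+1}$ is exactly our choice of $\alpha$. Only the $\omega$-condition on $T_{2n+2}$ may fail, but this is precisely what the short map formalism allows us to waive. Note that $f'$ is local since $f'(T_0) = f(T_0)$ is $U$-nontorsion.

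Finally, I would apply the Extension Lemma (Lemma~\ref{lem:extension}) to $f'$, producing a genuine almost $\inv$-morphism $f'' \co \cC'' \to \cA$, where $\cC''$ is a standard complex extending $\cC'$; the map $f''$ remains local, so $\cC'' \leq \cA$. By construction, $\cC''$ extends $\cC$ and its $(2n+1)$-st symbol is $+$, whereas the $(2n+1)$-st symbol of $\cC$ (viewed with trailing zeros) is $0$. Since $0 \lebang +$, the lexicographic characterization of the total order in Theorem~\ref{thm:lexico} yields $\cC'' > \cC$, contradicting the maximality of $\cC$ established in Theorem~\ref{thm:maximal}. The only real technical step is the verification that $f'$ satisfies the short map conditions, but this is mechanical given the construction of $\alpha$ and the fact that $\cA$ is reduced; the conceptual content of the argument is simply that any failure of $\tilde{f}$ to be injective would allow us to extend $\cC$ by a $+$ symbol, violating maximality.
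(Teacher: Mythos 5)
Your proof is correct and follows essentially the same route as the paper: assume $f(T_{2n})$ is hit by $\omega$ mod $U$, use the resulting element to define a local short map on the standard complex with $(+,-1)$ appended, and invoke the Extension Lemma to contradict the maximality of $\cC$. The only cosmetic difference is that the paper first cites Lemma~\ref{lem:notimU} to see $f(T_{2n})\notin\im U$, whereas your contradiction hypothesis (allowing $\alpha=0$) subsumes that step.
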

\begin{proof}
Let $\cC = \cC(a_i, b_i)$ be of length $2n$. By Lemma~\ref{lem:notimU}, $f(T_{2n})$ cannot be in $\im U$. Hence $f(T_{2n})$ is nonzero in $\cA/U$. To show that this class remains nonzero in $\omegahom(\cA)$, it suffices to prove that $f(T_{2n})$ is not in the image of $\omega \bmod U$. This follows from a similar argument as in the extension lemma. Suppose that we did have some $\tau_{2n+1} \in \cA$ for which $\omega \tau_{2n+1} \equiv f(T_{2n})$. Write $\partial \tau_{2n+1} = U\tau_{2n+2}$ for some $\tau_{2n+2} \in \cA$ (possibly zero). Then we have a local short map
\[
\cC(a_1, b_1, \ldots, a_n, b_n, +, -1) \leadsto \cA
\]
sending $T_{2n+1}$ to $\tau_{2n+1}$ and $T_{2n+2}$ to $\tau_{2n+2}$. By the extension lemma, this extends to a genuine local map into $\cA$, contradicting the maximality of $\cC$.
\end{proof}

In order to prove that $P$ is a homomorphism, we will explicitly compute the $\omega$-homology of $\cC_1$ and $\cC_2$ and show that it is one-dimensional. By Lemma~\ref{lem:omegamapnonzero}, this suffices to give a computation of $P(\cC_1 \otimes \cC_2)$.

\begin{lemma}\label{lem:omegatensor}
Let $\cC$ and $\cC'$ be standard complexes of length $2m$ and $2n$, respectively. Then $\omegahom(\cC \otimes \cC')$ is isomorphic to $\ff$ and is generated by $T_{2m} \otimes T_{2n}'$.
\end{lemma}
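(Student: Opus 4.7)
My strategy is to interpret $\omegahom(\cC \otimes \cC')$ as the degree-zero Tate cohomology of a $\Z/2$-action on $(\cC \otimes \cC')/U$, and then reduce to a K\"unneth-type computation. On any reduced standard complex $\cC$, direct inspection shows $\omega^2 = 0$ exactly: each $\omega$-arrow connects $T_{2k}$ with $T_{2k+1}$ for some $0 \le k \le m-1$, and the terminal generator $T_{2m}$ has no $\omega$-arrow. Thus $\iotabar = 1 + \omega$ is an honest involution on $\cC$, and likewise on $\cC'$, so $\iotabar_\otimes = \iotabar_1 \otimes \iotabar_2$ is an honest involution on $\cC \otimes \cC'$. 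Passing to $(\cC \otimes \cC')/U = (\cC/U) \otimes_{\F} (\cC'/U)$, this makes the tensor product an honest $\F[\Z/2]$-module, and by definition
\[
\omegahom(\cC \otimes \cC') \;=\; \ker(1+\iotabar_\otimes)/\im(1+\iotabar_\otimes).
\]

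The $\Z/2$-action on $\cC/U$ preserves each pair $\{T_{2k}, T_{2k+1}\}$ and acts non-trivially on it by a shear (fixing one generator, sending the other to a sum), while fixing $T_{2m}$. The only non-trivial indecomposable two-dimensional $\F[\Z/2]$-module (over $\F = \F_2$) is the regular representation, since $\F[\Z/2] \cong \F[s]/s^2$ admits only two indecomposable modules, $\F$ and $\F[s]/s^2$ itself. Hence each pair is isomorphic to $\F[\Z/2]$, and
\[
\cC/U \;\cong\; \F[\Z/2]^{\oplus m} \;\oplus\; \F \cdot T_{2m},
\]
with the last summand carrying the trivial action; similarly $\cC'/U \cong \F[\Z/2]^{\oplus n} \oplus \F \cdot T_{2n}'$.

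The diagonal tensor product (over $\F$) of any $\F[\Z/2]$-module with a free $\F[\Z/2]$-module is again free, and free $\F[\Z/2]$-modules have trivial Tate cohomology (since $\ker(1+g) = \im(1+g)$ on the regular representation). Distributing the decomposition above, every cross-term in $(\cC/U)\otimes_{\F}(\cC'/U)$ involving at least one factor of $\F[\Z/2]$ contributes zero to $\omegahom$, leaving only the trivial summand $(\F\cdot T_{2m})\otimes_{\F}(\F\cdot T_{2n}')$, which contributes an $\F$ generated by $T_{2m}\otimes T_{2n}'$. This yields the claimed identification $\omegahom(\cC \otimes \cC') \cong \F$ generated by $[T_{2m}\otimes T_{2n}']$. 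The main technical point is the identification of each non-trivial pair with the regular representation $\F[\Z/2]$, which is elementary given the classification of $\F[s]/s^2$-modules; the rest is standard homological bookkeeping.
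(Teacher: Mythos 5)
Your proof is correct, and it packages the computation differently from the paper. The paper's argument is a direct basis computation: it writes the induced action on $(\cC \otimes \cC')/U$ as $1 \otimes \omega + \omega \otimes 1 + \omega \otimes \omega$, works out the $\omega$-relations on each four-generator block $\{T_p, T_q\} \otimes \{T_r', T_s'\}$ (and on the mixed blocks involving $T_{2m}$ or $T_{2n}'$), and reads off from the resulting grid picture that $\ker \widehat{\omega} = \im \widehat{\omega}$ everywhere except on $T_{2m} \otimes T_{2n}'$. You instead observe that $\omega^2 = 0$ holds on the nose on a standard complex, so $\iotabar$ is an honest involution on $\cC/U$, decompose $\cC/U \cong \F[\Z/2]^{\oplus m} \oplus \F \cdot T_{2m}$ (each $\omega$-paired couple of generators being a copy of the regular representation) and likewise for $\cC'/U$, and then invoke two standard facts: the diagonal tensor of anything with a free $\F[\Z/2]$-module is free, and free modules are Tate-acyclic, i.e.\ $\ker(1+\iotabar) = \im(1+\iotabar)$. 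Since $\omegahom$ is exactly this Tate-type quotient and is additive over direct sums of $\iotabar$-invariant summands, only the trivial$\otimes$trivial piece survives, generated by $T_{2m} \otimes T_{2n}'$. The two arguments rest on the same underlying cancellation (your freeness-plus-acyclicity statement is precisely what the paper's four-generator block computation verifies by hand), but yours is more structural and scales without reference to the figure, while the paper's is more elementary and self-contained, requiring no representation-theoretic input. One small caveat worth noting: the identification of $\omegahom$ with $\Z/2$-Tate cohomology uses only the module structure of $(\cC \otimes \cC')/U$, not the differential, so the fact that $\iotabar$ fails to be a chain map on an almost $\iota$-complex is irrelevant here --- you implicitly use this, and it is fine, but it deserves a sentence.
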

\begin{proof}
The $\omega$-complex for $\cC/U$ is generated by $T_0, \ldots, T_{2m}$, with an $\omega$-relation linking each consecutive pair of generators $T_{2i}$ and $T_{2i+1}$ for $0 \leq i \leq m - 1$. This, together with the analogous picture for $\cC'$, is displayed along the horizontal and vertical axes in Figure~\ref{fig:omegatensor}. It is straightforward to explicitly compute the $\omega$-action on $(\cC \otimes \cC')/U$, keeping in mind that the action of $\omega$ on the tensor product is given by
\[
1 \otimes \omega + \omega \otimes 1 + \omega \otimes \omega.
\]
Indeed, we have the following sample computation. Suppose $\omega T_p = T_q$ and $\omega T_r' = T_s'$. Then the $\omega$-relations among the four tensor product generators are:
\begin{align*}
&\omega(T_p \otimes T_r') = T_p \otimes T_s' + T_q \otimes T_r' + T_q \otimes T_s' \\
&\omega(T_p \otimes T_s') = T_q \otimes T_s' \\
&\omega(T_q \otimes T_r') = T_q \otimes T_s', \text{ and} \\
&\omega(T_q \otimes T_s') = 0.
\end{align*}
It is then clear from Figure~\ref{fig:omegatensor} that the only nonzero class in $\omegahom(\cC \otimes \cC')$ is represented by $T_{2m} \otimes T_{2n}'$.
\end{proof}

\begin{figure}[h!]
\center
\includegraphics[scale=0.85]{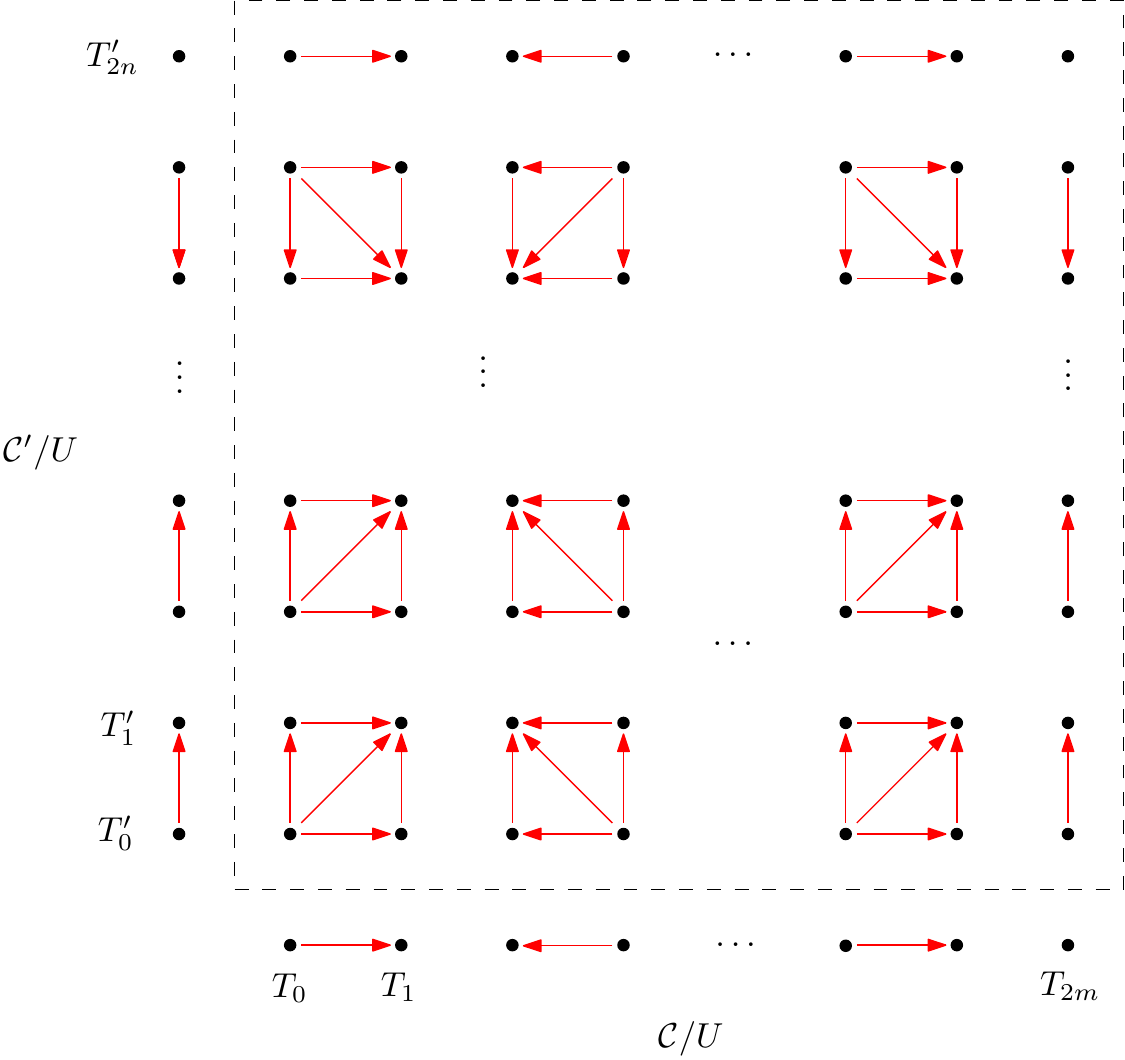}
\caption{Schematic representation of the $\omega$-action on $(\cC \otimes \cC')/U$. Red arrows depict the action of $\omega$. The complex for $\cC/U$ is displayed along the horizontal axis; $\cC'/U$ is displayed along the vertical axis. The complex for $(\cC \otimes \cC')/U$ is displayed inside the dotted box, with the tensor product generators placed in the obvious way.}
\label{fig:omegatensor}
\end{figure}

We thus have:

\begin{theorem}
The pivotal map $P$ is a homomorphism from $\smash{\mfIhat}$ to $\Z$.
\end{theorem}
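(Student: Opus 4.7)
The plan is to combine the computation of $\omega$-homology in Lemma~\ref{lem:omegatensor} with the non-vanishing result in Lemma~\ref{lem:omegamapnonzero} to conclude that $P$ behaves additively under tensor product. The key observation is that $\omegahom$ is a graded functor on the category of almost $\inv$-complexes: an almost $\inv$-morphism $f \co \cA_1 \to \cA_2$ induces a grading-preserving $\ff$-linear map $\tilde{f} \co \omegahom(\cA_1) \to \omegahom(\cA_2)$, since $f$ is grading-preserving and $\omega f + f \omega \equiv 0 \bmod U$.

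First I would verify that $P$ is well-defined on $\smash{\mfIhat}$, which amounts to checking that the standard complex in each local equivalence class is unique (this follows from Theorem~\ref{thm:lexico}, which shows that the assignment of symbol sequences to standard complexes is strictly monotonic, so distinct standard complexes are not locally equivalent). Next, given any two almost $\inv$-complexes $\cA_1$ and $\cA_2$, let $\cC_1$ and $\cC_2$ be their standard complex representatives, and let $\cD$ be the standard complex representative of $\cC_1 \otimes \cC_2$, with lengths $2m$, $2n$, and $2\ell$ respectively. Then $P(\cA_1) = P(\cC_1)$ is the grading of $T_{2m}$ in $\cC_1$, and similarly for the others; what we want is $P(\cD) = P(\cC_1) + P(\cC_2)$.

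To prove this, let $f \co \cD \to \cC_1 \otimes \cC_2$ be a local map (which exists by definition of local equivalence). By Lemma~\ref{lem:omegamapnonzero}, the induced map
\[
\tilde{f} \co \omegahom(\cD) \to \omegahom(\cC_1 \otimes \cC_2)
\]
is nonzero. By Example~\ref{ex:standardpivot}, $\omegahom(\cD) \cong \ff$ supported in grading $P(\cD)$, and by Lemma~\ref{lem:omegatensor}, $\omegahom(\cC_1 \otimes \cC_2) \cong \ff$ generated by $T_{2m} \otimes T_{2n}'$, which sits in grading $P(\cC_1) + P(\cC_2)$. Since $\tilde{f}$ is a nonzero grading-preserving $\ff$-linear map between one-dimensional graded vector spaces, the source and target must live in the same degree, yielding $P(\cD) = P(\cC_1) + P(\cC_2)$ as desired.

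I expect no substantive obstacle: the hard work was done in Lemma~\ref{lem:omegatensor} (computing $\omegahom$ of a tensor product) and Lemma~\ref{lem:omegamapnonzero} (showing the comparison map on $\omega$-homology survives). The only subtle point to be careful about is confirming that the induced map $\tilde{f}$ is truly grading-preserving, which it is because $\omega$ acts in degree zero, so $\smash{\omegahat}$ on $\cA/U$ preserves grading, and its kernel and image are graded subspaces, making $\omegahom$ a graded vector space on which any almost $\inv$-morphism induces a graded map.
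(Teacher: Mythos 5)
Your proof is correct and follows essentially the same route as the paper: both use Example~\ref{ex:standardpivot} to read off $P$ from $\omegahom$ of the standard representative, Lemma~\ref{lem:omegatensor} to compute $\omegahom(\cC_1 \otimes \cC_2)$, and Lemma~\ref{lem:omegamapnonzero} to show the induced map on $\omega$-homology of a local map is nonzero, forcing the gradings to agree. The only difference is cosmetic: you work with a nonzero grading-preserving map between one-dimensional graded spaces rather than asserting an isomorphism, which is if anything slightly more careful.
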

\begin{proof}
Let $\cC_1$ and $\cC_2$ be standard complexes. To compute $P(\cC_1 \otimes \cC_2)$, let $\cC$ be the standard complex in the local equivalence class of $\cC_1 \otimes \cC_2$. Then $P(\cC_1 \otimes \cC_2)$ is equal to the grading of the nonzero generator of $\omegahom(\cC)$, as explained in Example~\ref{ex:standardpivot}. Now, by Lemma~\ref{lem:omegatensor}, $\omegahom(\cC_1 \otimes \cC_2)$ is one-dimensional and is supported by an element in grading $P(\cC_1) + P(\cC_2)$. By Lemma~\ref{lem:omegamapnonzero}, we have a grading-preserving isomorphism between $\omegahom(\cC)$ and $\omegahom(\cC_1 \otimes \cC_2)$. Hence $P(\cC_1 \otimes \cC_2) = P(\cC) = P(\cC_1) + P(\cC_2)$. This completes the proof.
\end{proof}

The importance of the pivotal homomorphism lies in the following observation. Let $\cC = \cC(a_i, b_i)$ be any standard complex. Since $P(\cC)$ is just the grading of the final generator of $\cC$, it is easily checked that
\[
P(\cC)=\sum^\infty_{i=1} (-2i+1)\phi_i(\cC).
\]
As the left-hand side is a homomorphism, it is not too unreasonable to expect each of the $\phi_i$ to be a homomorphism also. We formalize this intuition using the following useful relation between $P$ and the shift maps $\shift_n$, which follows easily from Definition~\ref{def:shiftmap}:
\begin{align}\label{eqn:7.8}
P(\shift_n(\cC))&=\sum_{i=1}^{n-1} (-2i+1)\phi_i(\cC)+\sum_{i=n}^\infty (-2(i+1)+1)\phi_i(\cC)\\
&=P(\cC)-2\sum_{i=n}^\infty \phi_i(\cC). \nonumber
\end{align}

\begin{theorem}\label{thm:submain}
Each $\phi_n$ is a homomorphism from $\smash{\mfIhat}$ to $\Z$.
\end{theorem}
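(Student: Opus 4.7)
The plan is to express $\phi_n$ as a $\Z$-linear combination of maps already known to be homomorphisms, namely $P$ and $P \circ \shift_k$ for various $k$. For each $n \geq 1$, set
\[
\Phi_n(\cC) = \sum_{i=n}^{\infty} \phi_i(\cC),
\]
which is a finite sum for any standard complex $\cC$. The relation \eqref{eqn:7.8} rearranges to give
\[
\Phi_n(\cC) = \tfrac{1}{2}\bigl( P(\cC) - P(\shift_n(\cC)) \bigr).
\]
Since $P \co \smash{\mfIhat} \to \Z$ and $\shift_n \co \smash{\mfIhat} \to \smash{\mfIhat}$ are both homomorphisms (by the previous two subsections), the composite $P \circ \shift_n$ is a homomorphism, and hence so is $P - P \circ \shift_n$. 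The quantity $P(\cC) - P(\shift_n(\cC))$ is always an even integer (it equals $2\Phi_n(\cC)$ by definition on standard complexes), so dividing by $2$ yields an honest $\Z$-valued homomorphism. Therefore $\Phi_n$ is a homomorphism from $\smash{\mfIhat}$ to $\Z$.

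Finally, observe that
\[
\phi_n = \Phi_n - \Phi_{n+1},
\]
which is a difference of two homomorphisms and thus itself a homomorphism. This completes the proof.

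The only step requiring any care is verifying the identity $\Phi_n(\cC) = \tfrac{1}{2}(P(\cC) - P(\shift_n(\cC)))$ on a representative standard complex $\cC$, which is exactly what \eqref{eqn:7.8} provides; this is why it was useful to set up $P$ and $\shift_n$ as homomorphisms first. There is no serious obstacle—the work has already been done in establishing that $P$ and $\shift_n$ are homomorphisms, and the present theorem is essentially a formal consequence obtained by inverting the linear relation between $P \circ \shift_n$ and the $\phi_i$.
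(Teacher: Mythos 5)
Your proof is correct and rests on exactly the same ingredients as the paper's: the relation \eqref{eqn:7.8} together with the facts that $P$ and $\shift_n$ are homomorphisms. The only difference is organizational --- the paper extracts $\phi_n$ by downward induction on $n$, whereas you package the same identity as $\Phi_n = \tfrac{1}{2}(P - P\circ\shift_n)$ and take the difference $\phi_n = \Phi_n - \Phi_{n+1}$, which is a slightly cleaner way of saying the same thing.
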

\begin{proof}
Let $X$ and $Y$ be any two elements of $\smash{\mfIhat}$. Without loss of generality we assume that $X$ and $Y$ are standard complexes. We prove the claim by downward induction on $n$. Clearly, for all $n$ sufficiently large, we have $\phi_n(X) =\phi_n(Y) = \phi_n(X\otimes Y) = 0$. Thus, suppose that we have shown $\phi_n(X \otimes Y) = \phi_n(X) + \phi_n(Y)$ for all $n \geq N$. By (\ref{eqn:7.8}), we have the string of equalities:
\begin{align*}
-\sum_{i=N-1}^\infty 2\phi_i(X)-\sum_{i=N-1}^\infty  2\phi_i(Y) &= P(\shift_{N-1}(X)) - P(X) + P(\shift_{N-1}(Y)) - P(Y)\\
&=P(\shift_{N-1}(X \otimes Y)) -P(X\otimes Y)\\
&= - \sum^\infty_{i=N-1} 2\phi_i(X\otimes Y),
\end{align*}
where in the second line we have used the fact that $P$ and $\shift_{N-1}$ are homomorphisms. Using the induction hypothesis, we obtain the equality
\[
-2\phi_{N-1}(X)-2\phi_{N-1}(Y)=-2\phi_{N-1}(X\otimes Y).
\]
This completes the proof.
\end{proof}

The fact that the $\phi_n$ are homomorphisms immediately gives the following computation:

\begin{theorem}\label{thm:hhatzinfty}
For any $i, j > 0$, we have
\[
\phi_j(\cC(-, i)) = \delta_{ij},
\]
where $\delta_{ij}$ is the Kronecker delta. In particular, the forgetful homomorphism $\Inv \rightarrow \smash{\mfIhat}$ restricted to the subgroup $h(\Theta_{\text{SF}}) = h(\Theta_{\text{AR}})$ is injective, showing that
\[
\widehat{h}(\Theta_{\text{SF}}) = \widehat{h}(\Theta_{\text{AR}}) \cong \Z^{\infty}.
\]
\end{theorem}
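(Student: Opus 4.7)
The first claim is essentially immediate from the definitions. The complex $\cC(-, i)$ is already in standard form with symbol sequence $(a_1, b_2) = (-, i)$, so its single $b$-parameter equals $i$, which is positive. By the definition of $\phi_j$ on a standard complex,
\[
\phi_j(\cC(-, i)) = (\#\{b\text{-symbols equal to } j\}) - (\#\{b\text{-symbols equal to } -j\}) = \delta_{ij}.
\]
By Theorem~\ref{thm:parameterization}, every almost $\iota$-complex has a unique standard representative up to local equivalence, so there is no ambiguity in evaluating $\phi_j$ on the local equivalence class of $\cC(-, i)$.

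For the second claim, recall from the discussion in Section~\ref{sec:2} that $h(\Theta_{\text{SF}}) = h(\Theta_{\text{AR}}) = \mathrm{span}\{X_i\}_{i\in\N}$ inside $\mfI$. Under the forgetful homomorphism $\mfI \to \mfIhat$ of Lemma~\ref{lem:map-of-local-groups}, we have $X_i \mapsto \cC(-, i)$ (compare Figures~\ref{fig:iotacomplexes} and \ref{fig:stdcomplexes1}). Thus to prove injectivity of the forgetful map on $h(\Theta_{\text{SF}})$ it suffices to show that the classes $\{\cC(-, i)\}_{i \in \N}$ are $\Z$-linearly independent in $\mfIhat$.

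This follows by combining the first claim with Theorem~\ref{thm:submain}. Suppose
\[
\bigotimes_i n_i \cdot \cC(-, i) \sim \cC(0)
\]
in $\mfIhat$, where $n_i \in \Z$ and only finitely many $n_i$ are nonzero. Applying the homomorphism $\phi_j$ to both sides and using additivity together with the first claim yields
\[
\sum_i n_i \, \phi_j(\cC(-, i)) = n_j = 0
\]
for every $j \geq 1$. Hence all $n_i$ vanish, so the $\cC(-, i)$ are linearly independent in $\mfIhat$, and the forgetful homomorphism restricted to $\mathrm{span}\{X_i\}_{i \in \N}$ is injective. Consequently $\widehat{h}(\Theta_{\text{SF}}) = \widehat{h}(\Theta_{\text{AR}}) \cong \Z^\infty$, as desired.

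There is no real obstacle here; the theorem is primarily a bookkeeping consequence of the facts already established, namely Theorem~\ref{thm:submain} (the $\phi_n$ are homomorphisms), Theorem~\ref{thm:parameterization} (every almost $\iota$-complex has a standard representative), and the previously known computation $h(\Theta_{\text{SF}}) = \mathrm{span}\{X_i\}$ from \cite{DaiStoffregen}. The only subtlety worth double-checking is the identification $X_i \mapsto \cC(-, i)$ under the forgetful map, which follows by direct inspection of the $\iota$-complex $X_i$ described in Example~\ref{ex:xicomplex}.
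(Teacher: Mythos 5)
Your proposal is correct and follows essentially the same route as the paper: $\phi_j(\cC(-,i))=\delta_{ij}$ is read off from the definition of $\phi_j$ on standard complexes, and injectivity on $h(\Theta_{\text{SF}})=\operatorname{span}\{X_i\}$ follows because the forgetful map sends $X_i$ to $\cC(-,i)$ and the homomorphisms $\phi_j$ (Theorem~\ref{thm:submain}) recover the coefficients. One small attribution note: uniqueness of the standard representative is not literally Theorem~\ref{thm:parameterization} (which gives existence) but follows from the total order via Theorem~\ref{thm:lexico}; this is the same implicit fact the paper uses when defining $\phi_n$, so it is not a gap.
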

\begin{proof}
The claim follows immediately from noting that the complexes $X_i$ of Example~\ref{ex:xicomplex} form a basis for $h(\Theta_{\text{SF}}) = h(\Theta_{\text{AR}}) \cong \Z^{\infty}$, together with the fact that the forgetful homomorphism maps $X_i$ to $\cC(-, i)$.
\end{proof}


\section{Examples and further discussion}\label{sec:8}
We conclude this paper by discussing the group structure on $\smash{\mfIhat}$ and giving some possible applications to questions involving the span of Seifert fibered spaces in $\Theta^3_{\Z}$.

\subsection{Group structure}
In this subsection, we investigate the group structure on $\smash{\mfIhat}$ in terms of the standard complex parameters. Our main result will be to explicitly describe the group operation for all elements lying in $\smash{\widehat{h}}(\Theta_{\text{SF}})$.\footnote{Again, note that there are currently no known examples of homology spheres with image lying outside of $\smash{\widehat{h}}(\Theta_{\text{SF}})$.} As discussed in Theorem~\ref{thm:hhatzinfty}, this subgroup is abstractly isomorphic to $\Z^{\infty}$, with a basis given by the complexes $\cC(-, i)$ for $i \geq 1$. Our aim here will be to determine exactly which standard complexes lie in $\smash{\widehat{h}}(\Theta_{\text{SF}})$, and show that the group operation on $\smash{\widehat{h}}(\Theta_{\text{SF}})$ has a simple description in terms of the standard complex parameters.

Consider any sum
\[
\cC = \cC(\mp, \pm i_1) + \cC(\mp, \pm i_2) + \cdots + \cC(\mp, \pm i_m),
\]
where each $i_k > 0$, and every term is either of the form $\cC(-, i_k)$ or $\cC(+, -i_k)$. Without loss of generality, we assume that the $i_k$ are nonincreasing; i.e., $i_1 \geq i_2 \geq \cdots \geq i_m$. Moreover, we will further assume that $\cC$ is \textit{fully simplified}, meaning that we cancel all pairs of the form $\cC(-, i) + \cC(+, -i)$. The following theorem should be compared with Theorem 1.1 of \cite{Dai}, which computes the connected Heegaard Floer homology of elements of $\Theta_{\text{SF}}$. Indeed, Theorem~\ref{thm:seiferts} can be established via a careful analysis of the proof of \cite[Theorem 1.1]{Dai}, but here we present a self-contained argument in the language of almost $\inv$-complexes.

\begin{theorem}\label{thm:seiferts}
Let 
\[
\cC = \cC(\mp, \pm i_1) + \cC(\mp, \pm i_2) + \cdots + \cC(\mp, \pm i_m),
\]
be fully simplified with $i_1 \geq i_2 \geq \cdots \geq i_m$. Then the standard parameters of $\cC$ are given by concatenating the parameters of the above terms in the order they appear:
\[
\cC = \cC(\mp, \pm i_1, \mp, \pm i_2, \ldots, \mp, \pm i_m).
\]
\end{theorem}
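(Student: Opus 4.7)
I would proceed by induction on $m$. The base case $m = 0$ is trivial, as both sides equal $\cC(0)$. For the inductive step, by the induction hypothesis, the sum of the first $m - 1$ terms is locally equivalent to $\cC^{(m-1)} := \cC(\mp_1, \pm_1 i_1, \ldots, \mp_{m-1}, \pm_{m-1} i_{m-1})$, so it will suffice to show
\[
\cC^{(m-1)} \otimes \cC(\mp_m, \pm_m i_m) \sim \cC^{(m)},
\]
where $\cC^{(m)}$ denotes the concatenation. The hypotheses give $i_m \leq i_{m-1}$, and if $i_m = i_{m-1}$ then $(\mp_m, \pm_m) = (\mp_{m-1}, \pm_{m-1})$ (otherwise the sum would not be fully simplified).

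I would focus on establishing the single inequality $\cC^{(m)} \leq \cC^{(m-1)} \otimes \cC(\mp_m, \pm_m i_m)$. The reverse inequality then follows by duality: since $\cC(\mp, \pm i)^{\vee} = \cC(-\mp, -\pm i)$ is again of the prescribed form, the dual sum satisfies the same hypotheses (all signs flipped), so applying the forward inequality to the dual and then dualizing back gives $\cC^{(m-1)} \otimes \cC(\mp_m, \pm_m i_m) \leq \cC^{(m)}$.

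To establish the forward inequality, I would construct an explicit local map $f \co \cC^{(m)} \to \cC^{(m-1)} \otimes \cC(\mp_m, \pm_m i_m)$. Let $T_0', \ldots, T_{2m}'$ denote the generators of $\cC^{(m)}$, and let $T_j$ and $S_k$ denote generators of the two tensor factors. The naive choice $f(T_0') = T_0 \otimes S_0$ will generally fail the $\omega$-compatibility modulo $U$, since $\omega(T_0 \otimes S_0)$ need not lie in $\im U$. Instead, $f(T_0')$ should be defined as $T_0 \otimes S_0$ plus suitable \emph{torsion correction terms} drawn from the $U$-torsion parts of each factor, chosen so that $\omega f(T_0') \in \im U$; the remaining generators $f(T_j')$ are then defined recursively by following the $\omega$- and $\partial$-arrows of $\cC^{(m)}$, propagating the corrections along the chain, so that at each step the required preimage in the tensor product exists and is homogeneous of the correct grading.

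The main obstacle will be the case analysis on the four sign configurations $(\mp_{m-1}, \mp_m) \in \{-,+\}^2$ at the seam between the two factors, each requiring its own prescription for the correction terms. The hypothesis $i_m \leq i_{m-1}$ ensures that the relevant $U$-powers align, so that the correction terms remain well-defined elements of the tensor product (no negative $U$-powers arise), while the no-cancellation hypothesis prevents the map from trivializing, as would occur in $\cC(-, i) \otimes \cC(+, -i) \sim \cC(0)$. Once the correct formulas are in place, verifying the chain-map property and the $\omega$-compatibility modulo $U$ on each generator will be a direct computation.
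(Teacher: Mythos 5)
Your plan takes a genuinely different route from the paper. The paper also inducts on $m$ and tensors the concatenation of the first $m-1$ terms with the smallest term $\cC(\mp,\pm i_m)$, but it then performs an explicit, block-by-block change of basis on the tensor product, exhibiting it as a direct sum of the concatenated standard complex and a $U$-torsion complement. This gives a homotopy-equivalence splitting outright, with no need to separately produce maps in two directions; duality is used in the paper only to reduce the sign cases $\cC(-,\Delta)$ versus $\cC(+,-\Delta)$. Your idea of proving only the single inequality $\cC^{(m)}\leq\cC^{(m-1)}\otimes\cC(\mp_m,\pm_m i_m)$ by constructing a local map and then recovering the reverse inequality by dualizing the entire statement is a legitimate and attractive simplification: the dual sum is again fully simplified with the same non-increasing $i_k$, and dualizing reverses the partial order, so the reduction is sound.

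That said, the core of your argument --- the recursive construction of the forward local map --- is only a sketch, and the description of the correction terms is imprecise enough that it does not yet amount to a proof. In the simplest non-trivial case $\cC(+,-n)\otimes\cC(-,\Delta)$ with $n>\Delta$, the required image of $T_0'$ is $T_0\otimes S_0 + T_0\otimes S_1 + T_1\otimes S_1 + U^{\,n-\Delta}(T_2\otimes S_2)$; the correction terms involve not only $U$-torsion cycle generators but also the non-cycle generators $T_1$ and $S_2$, with $U$-exponents dictated by the gradings. Moreover the condition ``$\omega f(T_0')\in\im U$'' is only the right requirement when $a_1 = +$; when $a_1 = -$ one needs $\omega f(T_0')\equiv f(T_1')\bmod U$, which couples the definition of $f(T_0')$ to the rest of the recursion. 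Most importantly, the assertion that at each step ``the required preimage in the tensor product exists and is homogeneous of the correct grading'' is exactly the point that needs to be checked, and it is here that the fully-simplified hypothesis and the inequalities $i_m\leq i_k$ must be invoked with some care --- in the paper's change-of-basis argument those hypotheses are used at specific seams to guarantee that certain $U$-powers are positive and certain arrows are unchanged modulo $U$. Until the recursive formula for $f(T_j')$ is written down and the chain-map and $\omega$-conditions verified at the seam between the $(m-1)$st block and the final block (and, via the induction, at each interior seam), the proposal remains a plausible strategy rather than a complete proof.
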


\begin{proof}
To communicate the idea of the proof, it will be helpful to warm up with the simplest case when $m = 2$. Let $n$ and $\Delta$ be positive integers. We consider the following two cases:
\begin{align*}
&\cC_1 = \cC(-, n) + \cC(-, \Delta) \text{ with } n \geq \Delta, \text{ and}\\
&\cC_2 = \cC(+, -n) + \cC(-, \Delta) \text{ with } n > \Delta.
\end{align*}
The other two cases in which $\cC(-, \Delta)$ is replaced by $\cC(+, -\Delta)$ follow by dualizing. For both $\cC_1$ and $\cC_2$, the obvious tensor product basis consists of nine generators. These are displayed on the left in Figure~\ref{fig:seifertsfig1}, where they are labeled $a$ through $i$. We have drawn red arrows to represent the action of $\omega$, and black arrows for the action of $\partial$. Each black arrow is labeled by either $n$ or $\Delta$ to represent multiplication by a power of $U$; hence (for example) for $\cC_1$, we have $\partial i = U^n f + U^{\Delta} h$. 

On the right-hand side of Figure~\ref{fig:seifertsfig1}, we have depicted some convenient basis changes for $\cC_1$ and $\cC_2$. In the first row (corresponding to $\cC_1$), we carry out the basis change
\begin{align*}
&d' = d + b + e, \\
&g' = g + U^{n-\Delta}c + U^{n-\Delta}f, \text{ and} \\
&h' = h + U^{n - \Delta}f,
\end{align*}
keeping the other basis elements fixed (e.g., $a' = a$, and so on). The reader should (carefully) verify that this results in the diagram on the right-hand side of the first row. In the second row (corresponding to $\cC_2$), we carry out the basis change
\begin{align*}
&a' = a + b + e + U^{n-\Delta}i \text{ and} \\
&e' = e + U^{n-\Delta}i.
\end{align*}
There is a slight subtlety in this case, as the reader can check that $\omega e'$ is not actually equal to $b'$, and likewise $\omega d'$ is not equal to $a'$. However, since $n > \Delta$, it is easily established that these hold modulo $U$. (We have marked some of the red arrows with congruence symbols to represent this.) It is then evident from Figure~\ref{fig:seifertsfig1} that we have the local equivalences
\begin{align*}
&\cC_1 = \cC(-, n, -, \Delta) \text{ and} \\
&\cC_2 = \cC(+, -n, -, \Delta),
\end{align*}
as desired.

\begin{figure}[h!]
\center
\includegraphics[scale=0.7]{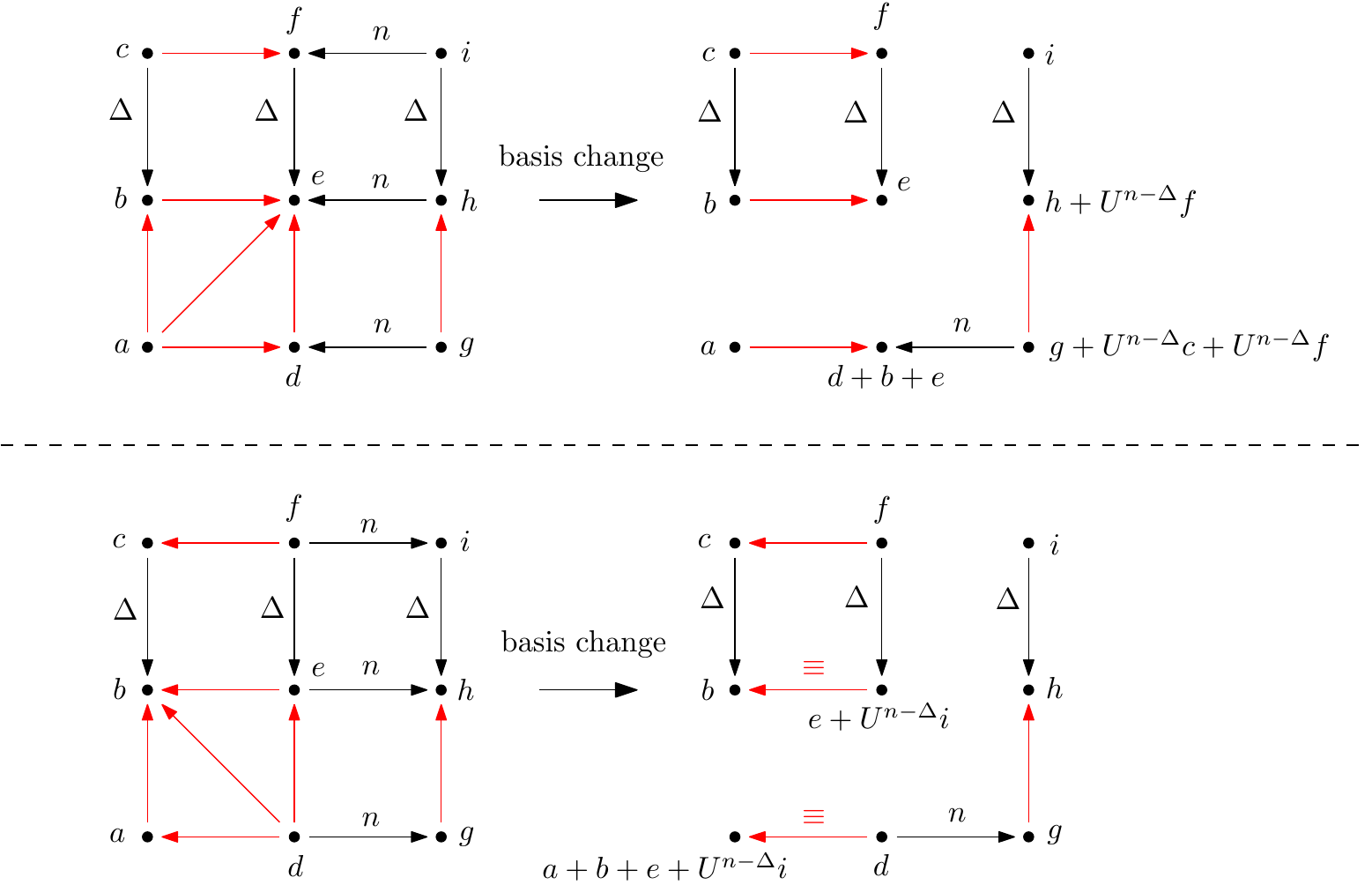}
\caption{Tensor product complexes for $\cC_1 = \cC(-, n) + \cC(-, \Delta)$ (top) and $\cC_2 = \cC(+, -n) + (- ,\Delta)$ (bottom). The tensor product basis is shown on the left, and an appropriate basis change is shown on the right.}
\label{fig:seifertsfig1}
\end{figure}

To establish the general case, we proceed by induction on $m$. Suppose that we have established the claim for 
\[
\cC = \cC(\mp, \pm i_1) + \cC(\mp, \pm i_2) + \cdots + \cC(\mp, \pm i_m),
\]
as in the statement of the theorem. Let $\Delta$ be a positive integer with $\Delta \leq i_k$ for all $1 \leq k \leq m$, and consider
\[
\cC' = \cC + \cC(-, \Delta).
\]
The case where we add $\cC(+, - \Delta)$ instead of $\cC(-, \Delta)$ again follows by dualizing. The obvious tensor product complex for $\cC + \cC(-, \Delta)$ is schematically depicted in the first row of Figure~\ref{fig:seifertsfig2}. Using the inductive hypothesis applied to $\cC$, this has $3(2m+1)$ generators. We have suppressed explicitly labeling the black arrows; implicitly, all of the vertical black arrows come with a label of $\Delta$, and the $m$ columns of horizontal black arrows are labeled by $i_1$ through $i_m$ (from left to right).

\begin{figure}[h!]
\center
\includegraphics[scale=0.7]{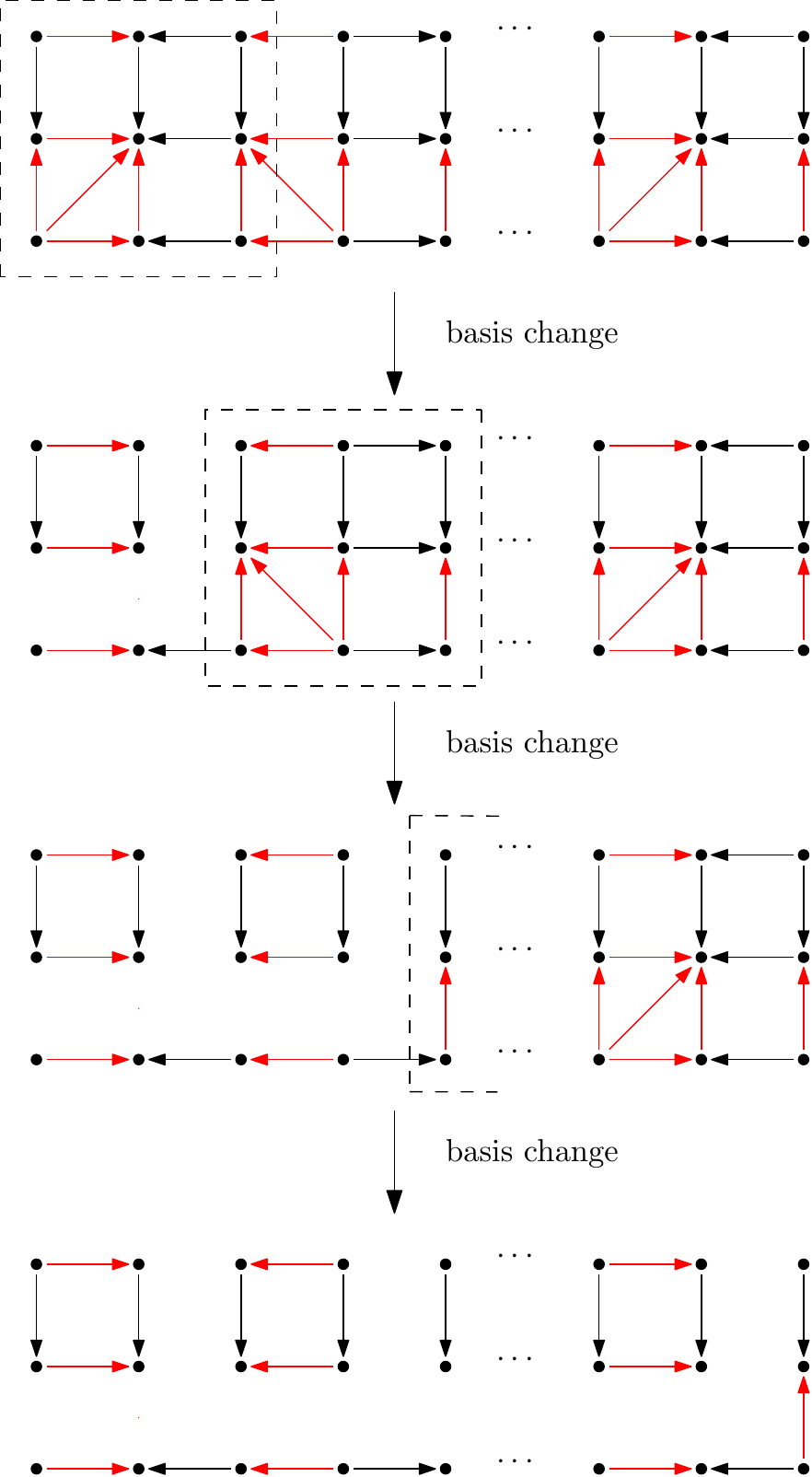}
\caption{Successive basis changes for $\cC + \cC(-, \Delta)$.}
\label{fig:seifertsfig2}
\end{figure}

Our strategy will be to attempt to split off subcomplexes by change-of-basis moves paralleling those defined for $\cC_1$ and $\cC_2$. We begin by considering the nine leftmost generators of $\cC'$, as indicated by the dashed box in the first row of Figure~\ref{fig:seifertsfig2}. Label these $a$ through $i$, as in Figure~\ref{fig:seifertsfig1}. Applying the appropriate basis change from either the first or the second row of Figure~\ref{fig:seifertsfig1} leads to the diagram in the second row of Figure~\ref{fig:seifertsfig2}. More precisely, if the first term of $\cC$ is $\cC(-, i_1)$, then we apply the basis change defined for $\cC_1$, while if the first term of $\cC$ is $\cC(+, -i_1)$, we apply the basis change defined for $\cC_2$. Note that in the first case, there is an additional subtlety: since we replace the generators $g$ and $h$ by $g'$ and $h'$ (respectively), we are in danger of changing the red arrows entering/exiting $g$ and $h$ on the right. To check that this does not happen, we consider two cases:
\begin{enumerate}
\item Suppose that there are red arrows entering $g$ and $h$ from the right. We claim that in this situation, we must have $i_1 > \Delta$. Indeed, because $\cC$ is maximally simplified, if $i_1$ were equal to $\Delta$, then all subsequent terms in our sum are identically equal to $\cC(-, \Delta)$. This implies that $g$ and $h$ have red arrows exiting them, rather than entering. Hence $i_1 > \Delta$. But this shows $g' \equiv g \bmod U$ and $h' \equiv h \bmod U$, which means that the original red arrows to the right of $g$ and $h$ hold modulo $U$.
\item Suppose that there are red arrows exiting $g$ and $h$ towards the right. Then we can explicitly check that the red arrows to the right of $g$ and $h$ are unchanged, using the fact that $\omega g' = \omega g + U^{n-\Delta}f = h' + (\omega g + h)$ and $\omega h' = \omega h$.
\end{enumerate}
We thus see that in either case, our basis move does not change the form of the diagram lying to the right of $g$, $h$, and $i$.

We now consider the nine generators lying inside the dashed box in the second row of Figure~\ref{fig:seifertsfig2}, re-labeling these $a$ through $i$ as usual. Again, we attempt to perform a basis change as in Figure~\ref{fig:seifertsfig1}. If the second term of $\cC$ is of the form $\cC(-, i_2)$, then we use the basis change defined for $\cC_1$, as before. Here, it is important to note that there are no arrows exiting $b$ and $c$ to the left, so that the changes $d' = d + b + e$ and $g' = g + U^{n - \Delta}c + U^{n - \Delta}f$ do not alter or create any additional arrows beyond those depicted in Figure~\ref{fig:seifertsfig1}. If the second term in $\cC$ is $\cC(+, i_2)$, then we attempt to use the basis change defined for $\cC_2$. However, there is now an additional subtlety, as depicted in Figure~\ref{fig:seifertsfig3}. The problem here is that we have a black arrow entering/exiting $a$ from the left, so when we set
\[
a' = a + b + e + U^{n-\Delta}i,
\]
we must ensure that we do not change the diagram to the left of the dashed box. If the black arrow to the left of $a$ is exiting $a$, then this follows from the fact that $\partial a' = \partial a$. However, if the arrow is instead entering $a$ (representing some relation $\partial p = U^ka$), then the diagram is no longer accurate, since evidently $\partial p \neq U^k a'$. In this situation, we thus carry out the additional (retroactive) basis change
\[
p' = p + U^{k - \Delta}c + U^{k - \Delta}f
\]
as in Figure~\ref{fig:seifertsfig3}, so that $\partial p' = U^k a'$. Note that here $\Delta < k$, by using the fact that $\cC$ is fully simplified. Hence $p' \equiv p \bmod U$, so (modulo $U$) our basis change does not change the red arrow exiting $p$. 

In any case, we see that performing the appropriate change-of-basis splits off another subcomplex and leads to a diagram as in the third row of Figure~\ref{fig:seifertsfig2}. Iterating this procedure results in the complex depicted in the bottom row of Figure~\ref{fig:seifertsfig2}. This is locally equivalent to 
\[
\cC(\mp, \pm i_1, \mp, \pm i_2, \ldots, \mp, \pm i_m, -, \Delta),
\]
as desired.
\begin{figure}[h!]
\center
\includegraphics[scale=0.7]{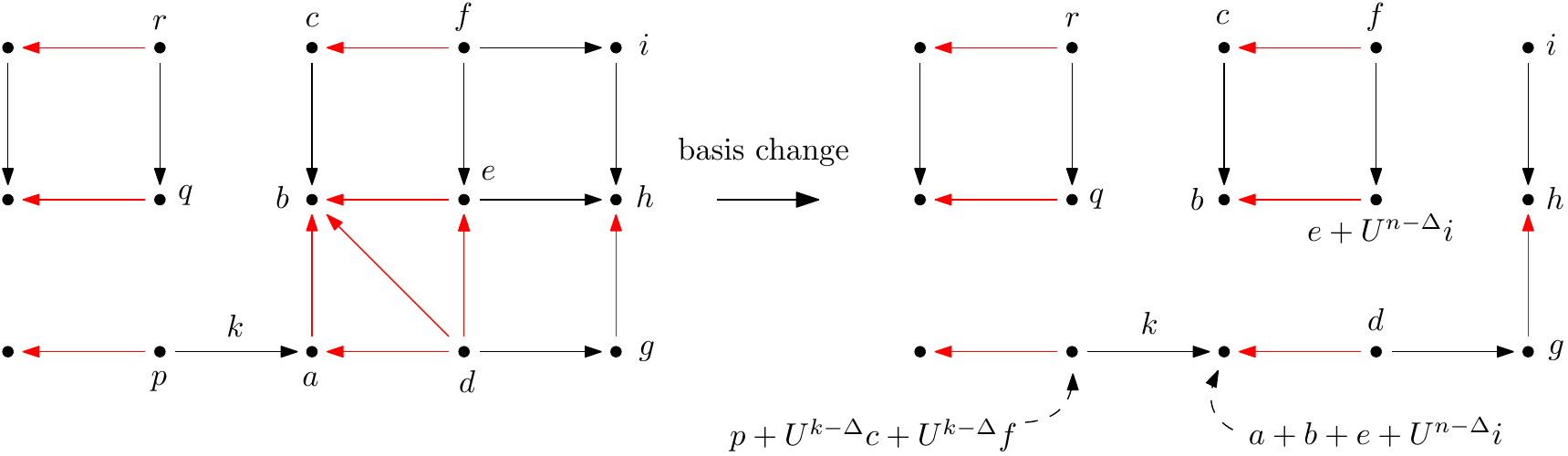}
\caption{A special case involving an additional (retroactive) basis change.}
\label{fig:seifertsfig3}
\end{figure}
\end{proof}

Theorem~\ref{thm:seiferts} gives an explicit description of the set of standard complexes realized by linear combinations of Seifert fibered spaces (or, more generally, almost-rational plumbed homology spheres). These consist of concatenations of nonincreasing, maximally simplified lists of basis elements $\cC(-, i)$ and $\cC(+, -i)$. Conversely, the sum of any two such standard complexes can be computed by decomposing them into their constituent terms $\cC(\mp, \pm i)$ and re-applying Theorem~\ref{thm:seiferts}.

However, in general the group operation on $\mfIhat$ is not so simple. We now give an example which does not obviously correspond to any sort of concatenation operation:

\begin{example}\label{ex:complicatedtensor}
Let $m$ and $n$ be positive integers, and assume without loss of generality that $m \leq n$. The tensor product complex corresponding to
\[
\cC = \cC(-, -m) + \cC(-, -n)
\]
is depicted on the left in Figure~\ref{fig:complicatedtensor}. On the right, we have displayed a basis change which shows that $\cC$ is locally equivalent to the standard complex
\[
\cC(-, -m, -, m, +, -n, -, -m).
\]
Note that $\phi_m(\cC(-,-m, -, m, +, -n, -, -m)) = -1 + 1 -1 = -1$, which is consistent with the fact that $\phi_m$ is a homomorphism. Example~\ref{ex:complicatedtensor} shows that the sum of two standard complexes of length $p$ and $q$ may be a standard complex of length greater than $p + q$.

\begin{figure}[h!]
\center
\includegraphics[scale=0.7]{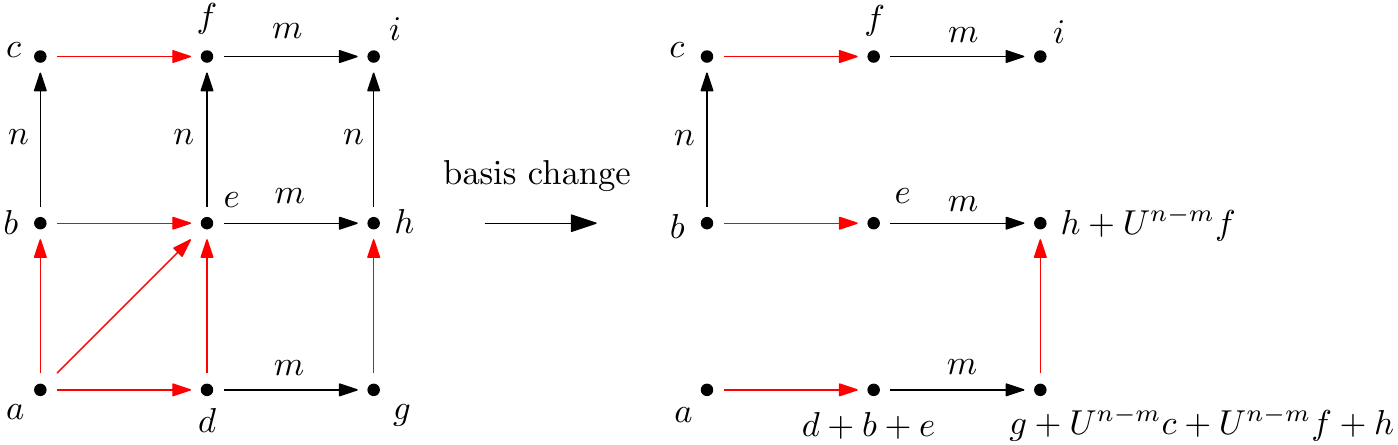}
\caption{The tensor product $\cC(-, -m) + \cC(-, -n)$, with $m \leq n$.}
\label{fig:complicatedtensor}
\end{figure}
\end{example}

There are certainly other cases in which one can compute the group law on $\mfIhat$, but the authors are not currently aware of any general description in terms of the standard complex parameters.


\subsection{Concluding remarks}
We close this paper with some remarks regarding questions about $\Theta^3_\Z$ and $\mfIhat$. We begin by showing that the forgetful homomorphism from $\Inv$ to $\smash{\mfIhat}$ is not surjective:

\begin{theorem}\label{thm:forgetfulnotsurjective}
Let $\cA$ be an almost $\inv$-complex whose standard complex parameters begin with the pair $(-, -i)$ or $(+, i)$ for $i > 0$. Then $\cA$ is not in the image of the forgetful homomorphism $\Inv \rightarrow \smash{\mfIhat}$.
\end{theorem}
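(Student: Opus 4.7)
My plan is to assume for contradiction that $\cA$ lies in the image of the forgetful map, so that $\cA$ is locally equivalent (as almost $\iota$-complexes) to some genuine $\iota$-complex $(C,\iota)$, and then derive a contradiction by exploiting the fact that on $(C,\iota)$ the involution $\iota$ commutes with $\partial$ on the nose---not merely modulo $U$. I will handle the $(-,-i)$ case directly, and then reduce the $(+,i)$ case to it by duality.

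By Theorem~\ref{thm:parameterization}, $(C,\iota)$ is locally equivalent to its standard complex representative $\cC$, so I may fix local almost $\iota$-morphisms $f\co\cC\to(C,\iota)$ and $g\co(C,\iota)\to\cC$. Assuming $\cC$ begins with $(-,-i)$, we have $\omega T_0=T_1$ and $\partial T_1=U^iT_2$ in $\cC$. Writing $f(T_1)=\omega f(T_0)+Ur$ for some $r\in C$ and applying $\partial$, the strict identity $\partial\omega=\omega\partial$ on $(C,\iota)$ together with $\partial f(T_0)=0$ gives $\partial f(T_1)=U\partial r$. But $f$ is itself a chain map, so $\partial f(T_1)=f(\partial T_1)=U^if(T_2)$, and $U$-torsion-freeness of $C$ yields $U^{i-1}f(T_2)=\partial r$. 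Applying $g$ then produces $U^{i-1}[(g\circ f)(T_2)]=0$ in $H_*(\cC)$. However, $g\circ f\co\cC\to\cC$ is a local self-map of a standard complex, so Lemma~\ref{lem:selflocalmapisomorphism} ensures it is an $\F[U]$-chain isomorphism and in particular preserves $U$-orders on homology. A brief inspection of the standard complex structure shows that $T_2$ appears in $\mathrm{im}(\partial)$ only via the relation $\partial T_1=U^iT_2$, so $[T_2]\in H_*(\cC)$ has $U$-order exactly $i$, contradicting the vanishing of $U^{i-1}[(g\circ f)(T_2)]$.

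For the $(+,i)$ case, I would invoke duality. A direct extension of the length-two computation in Example~\ref{ex:xicomplex} shows that $\cC(a_j,b_j)^\vee=\cC(-a_j,-b_j)$ for any standard complex, so any complex beginning with $(+,i)$ dualizes to one beginning with $(-,-i)$. Since dualization is defined on both $\Inv$ and $\mfIhat$ and commutes with the forgetful homomorphism, the image of $\Inv\to\mfIhat$ is closed under duals, and the $(-,-i)$ case rules out the $(+,i)$ case as well.

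The main obstacle---really the conceptual heart of the argument---is the key step in the second paragraph, where the strict commutation $\partial\iota=\iota\partial$ on a genuine $\iota$-complex is used to upgrade $U^if(T_2)=\partial(Ur)$ into $U^{i-1}f(T_2)=\partial r$, extracting an extra factor of $U$ that is then detected on homology via Lemma~\ref{lem:selflocalmapisomorphism}. This divisibility improvement is precisely the feature unavailable in the almost $\iota$-complex world, and is what witnesses the difference between $\Inv$ and $\mfIhat$.
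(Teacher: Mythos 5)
Your proof is correct, and the key insight---exploiting the strict identity $\partial\iota=\iota\partial$ on a genuine $\iota$-complex against the pattern $\omega T_0 = T_1$, $\partial T_1 = U^i T_2$---is exactly the one the paper uses. The implementations differ in a mildly interesting way: the paper picks a maximally graded $U$-nontorsion cycle $x$ in the genuine $\iota$-complex $\mathcal{X}$ and pushes it forward along a single local map $\mathcal{X}\to\cC$, observing that $\partial f(\omega x)$ must be supported by $U^i T_2$ (hence nonzero) while $\partial\omega x = \omega\partial x = 0$ forces it to vanish. You instead go the other direction, $\cC\to (C,\iota)\to\cC$, use strict commutation to extract the extra factor of $U$ from $U^i f(T_2)=\partial(\omega f(T_0)+Ur+\partial H T_0)$, and then rely on Lemma~\ref{lem:selflocalmapisomorphism} to see that the roundtrip $g\circ f$ preserves the $U$-order of $[T_2]$. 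Your route has the advantage of being more explicit about where the divisibility improvement lives (no informal "supported by" language), at the cost of invoking one more structural lemma and a pair of maps rather than one. Both handle $(+,i)$ by duality; the identity $\cC(a_j,b_j)^\vee = \cC(-a_j,-b_j)$ you cite is correct (and straightforward to verify as you indicate), though neither you nor the paper spells it out. One small bookkeeping point worth being careful about: the almost-$\iota$-morphism condition a priori produces $f(T_1) = \omega f(T_0) + \partial(HT_0) + Ur$ with a homotopy term, not just $\omega f(T_0)+Ur$; fortunately the extra $\partial(HT_0)$ dies under $\partial$, so your conclusion $\partial f(T_1)=U\partial r$ is unaffected, but this should be noted rather than silently absorbed.
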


\begin{proof}
Assume $\cA$ is locally equivalent to a standard complex $\cC$ whose parameters begin with $(-, -i)$. Let $\mathcal{X}$ be a genuine $\inv$-complex, and suppose that $\mathcal{X}$ maps to $\cA$ under the forgetful homomorphism $\Inv \rightarrow \smash{\mfIhat}$. (More precisely, suppose that the local equivalence class of $\mathcal{X}$ maps to the local equivalence class of $\cA$.) Putting various maps together, this implies the existence of a map
\[
f\co \mathcal{X} \rightarrow \cC,
\]
which is a local map in the sense of almost $\inv$-complexes (viewing $\mathcal{X}$ as an almost $\inv$-complex). Let $x$ be any maximally graded, $U$-nontorsion cycle in $\mathcal{X}$. Then the fact that $f$ is local implies $f(x)$ must be supported by the generator $T_0$ of $\cC$. It easily follows that $f(\omega x)$ must be supported by $T_1 = \omega T_0$, and hence that $f(\partial \omega x) = \partial f(\omega x)$ must be supported by $\partial T_1 = U^i T_2$. In particular, this shows that $\partial \omega x \neq 0$. But since $\mathcal{X}$ is a genuine $\inv$-complex, we have $\partial \omega x = \omega \partial x = 0$, a contradiction. The case for $(+, i)$ follows by dualizing.
\end{proof}

While Theorem~\ref{thm:forgetfulnotsurjective} shows that there are many almost $\inv$-complexes which do not come from $\inv$-complexes, we do not currently have a complete description of the image of $\Inv$ in $\smash{\mfIhat}$. On the topological side, the extent of our knowledge is summarized in Figure~\ref{fig:summary}. Here, the only computations known to be realized by actual 3-manifolds are those for $\Theta_{\text{SF}}$ and $\Theta_{\text{AR}}$. One can thus attempt to use Theorem~\ref{thm:seiferts} to find a 3-manifold which is not cobordant to a linear combination of Seifert fibered spaces.

\begin{figure}[h!]
\center
\includegraphics[scale=1]{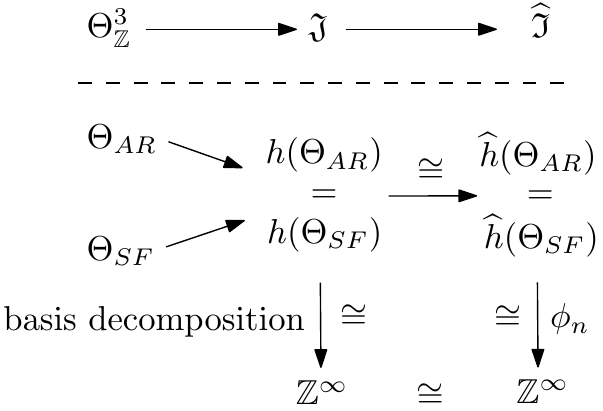}
\caption{Our current understanding of $\Inv$ and $\mfIhat$.}
\label{fig:summary}
\end{figure}

In a different direction, one can also consider the relationship between $\mfIhat$ and the different flavors of Heegaard Floer homology, in particular $\HFhat$ and $\HFm$. Indeed, an almost $\iota$-complex is equivalent to the following data:
\begin{enumerate}
	\item \label{it:iotahat} the involution $\hat{\iota}_* \co \HFhat(Y) \to \HFhat(Y)$ induced by $\iota$, and
	\item \label{it:les} the long exact sequence
\[
\begin{tikzcd}[column sep=small]
\HFm(Y)\arrow{rr}{U} & & \HFm(Y) \arrow{dl} \\
& \HFhat(Y). \arrow{ul} & 
\end{tikzcd}
\]
\end{enumerate}
Here, we require an explicit identification between the copies of $\HFhat(Y)$ appearing in items (1) and (2). Note that Hendricks and Lipshitz \cite{HendricksLipshitz} give an algorithm for computing the induced map
\[
\hat{\iota}_* \co \HFhat(Y) \to \HFhat(Y),
\]
whereas the authors are not aware of any general algorithm for computing either $\iota \co \CFm(Y) \to \CFm(Y)$ or $\iota_* \co \HFm(Y) \to \HFm(Y)$. Thus, it may be that $\smash{\widehat{h}}$ is easier to calculate than $h$. In particular, both the theoretical and computational tools appear to be in place to find a homology sphere that is not homology cobordant to a linear combination of Seifert fibered spaces, although specific examples remains elusive.


\bibliographystyle{amsalpha}
\bibliography{bib}

\end{document}